\newtheorem{thm}{Theorem}[section]
\newtheorem{prop}[thm]{Proposition}
\newtheorem{lem}[thm]{Lemma}
\theoremstyle{definition}
\newtheorem{defn}[thm]{Definition}
\newcommand{\be}{\begin{equation}}
\newcommand{\ee}{\end{equation}}
\newcommand{\R}{\mathbb{R}}
\newcommand{\N}{\mathbb{N}}
\newcommand{\E}{\mathbb{E}}
\def \eps {{ \varepsilon }}
\def \o {{  {\mathbb{R}^n} }}
\def \calf {{  {\mathcal{F}} }}
\def \cala {{  {\mathcal{A}}  }}
\def \cale {{  {\mathcal{E}}  }}
\def \calg {{  {\mathcal{G}}  }}
 \def \call {{  {\mathcal{L}}  }}
\begin{document}

\baselineskip=1 \baselineskip

%
%
%
%
%
%
%
%

\begin{center}
 {  \bf 
 Large Deviations  of Fractional Stochastic Equations
 with Non-Lipschitz Drift  and Multiplicative Noise
 on Unbounded Domains
   }
\end{center}

\medskip

\medskip

\begin{center}
 Bixiang Wang  
\vspace{1mm}\\
Department of Mathematics, New Mexico Institute of Mining and
Technology \vspace{1mm}\\ Socorro,  NM~87801, USA \vspace{3mm}\\
Email: bwang@nmt.edu\vspace{6mm}\\
\end{center}



\begin{abstract}  
This paper is concerned with the large deviation principle
of the non-local fractional stochastic reaction-diffusion equation
with a polynomial drift of arbitrary
degree driven by multiplicative
noise defined on unbounded domains.
We first prove the strong convergence of the solutions
of a control equation  with respect to the weak topology
of   controls, and then show the convergence
in distribution  of the solutions of the  stochastic
equation when the  noise intensity approaches zero. We finally
establish the large deviations of the stochastic equation by
the weak convergence method.
The main difficulty of the paper is caused by the non-compactness
of Sobolev embeddings on unbounded domains, and the idea of
uniform tail-ends estimates is employed to circumvent the obstacle
in order to obtain the tightness of distribution laws of the stochastic equation
 and the precompactness of  the control equation.
  \end{abstract}

{\bf Key words.}     
Unbounded domain; tail estimate; tightness; 
large deviation principle;  weak convergence method;
polynomial drift.

  {\bf MSC 2010.}   60F10, 60H15, 37L55, 35R60.

\section{Introduction}
\setcounter{equation}{0}

In this paper, we
study the large deviation principle (LDP) of 
 the fractional stochastic reaction-diffusion
equation    with polynomial
drift    of arbitrary order and  Lipschitz
  diffusion 
  defined on the entire space $\R^n$. 
Given   $ \alpha \in (0,1)$,  consider the  non-local
fractional 
Ito stochastic 
equation for  $x\in \o$ and $t >0$:
\be
  \label{intr1}
  du^{\eps} (t)
  + (-\Delta)^ \alpha  u^{\eps} (t)  dt
  + F(t,x, u^{\eps}(t)) dt
  =  
    g(t,x)   dt
  +\sqrt{\eps} \sigma (t, x,  u^{\eps}(t))    {dW} ,
  \ee 
  with   initial condition 
 \be\label{intr3}
 u^{\eps}( 0, x ) = u_0 (x),   \quad x\in \R^n,
 \ee 
where 
$\eps \in (0,1)$ is the noise intensity,
 $(-\Delta)^\alpha$ is  the fractional Laplace operator,
 $F$ is a nonlinear function with polynomial
 growth of arbitrary  order in its third argument, 
  $g\in L^2_{loc}(\R; L^2(\o))$ is  given,
  $\sigma$  is a Lipschitz  nonlinear function,
     and $W$
 is a  
 cylindrical Wiener process
   in a   Hilbert
 space  defined 
 on a  complete filtered probability space.

 Fractional   partial  differential equations 
 have  numerous applications
    \cite{abe1,  garr1,    guan2,  
    jara1, kos1},
     and have been extensively
     studied  in the literature,
     see e.g.,    
   \cite{abe1,  caff1,  dine1,  gal1, garr1,  guan2,
    jara1, kos1,  luh1,   ros1, ser1, ser2} for the solutions 
    and    
     \cite{chen1, gu1, luh2,  wan10, wangJDE2019}
     for the dynamics of  such type of equations.
     In this  paper, we are interested in the LDP
     of the  
   the non-local
    fractional stochastic  equation \eqref{intr1}
   on $\R^n$ 
    as $\eps \to 0$.
    
     The  LDP   of
  stochastic  differential equations   has 
    been  well developed  for finite-dimensional
 as well as   infinite-dimensional  
 systems,
 which can be established by the classical method 
    \cite{bra1, card1, cerr1, che1,chow1,  dem1, 
  far1, fre1, fre2, gau1, kall1, mart1, pes1, sow1,
  str1,
  var1, var2, ver1},
  and the weak convergence method
   \cite{bess1, brz1, bud1, bud2, cerr2, cerr3,
   cerr4, chu1, 
   duan1, dup1, liu1, ort1, ren1, roc1,sal1, sal2}.
  The detailed description of the classical
  LDP  and
  the weak convergence theory can be found
  in \cite{fre1} amd \cite{bud1,  dup1},
  respectively.

    For the standard 
    stochastic reaction-diffusion 
    equation (i.e., $\alpha =1$ in \eqref{intr1}),
    the LDP has  been investigated in
    \cite{cerr5, che1, chow1, fre1, kall1,
    pes1, sow1}
    for  globally Lipschitz continuous or linearly growing nonlinearity $F$,
    and in \cite{cerr1, cerr3, hai1, ren1}
    for locally Lipschitz continuous $F$
    of  polynomial type.
    In particular,  the LDP of the equation
    driven by multiplicative noise  has been examined
    in \cite{cerr1}  and \cite{ren1} when $F$ is a polynomial of
    {\it any degree}.
    Note that in all these papers, the standard reaction-diffusion equation
    is defined in a {\it bounded } domain in $\R^n$,
    where the solution operators  of the corresponding deterministic equation
    are  compact due to the compactness  of Sobolev embeddings
    on bounded domains. The compactness of the solution operators
    as well as   Sobolev embeddings play a key role
    for proving the LDP of the equation on bounded domains.
    Indeed,
     such compactness is  a main avenue to
    establish  the tightness and   convergence of a family of solutions
   of the stochastic equation on  bounded  
   domains  as $\eps \to 0$, especially when $F$ is a polynomial of
   arbitrary degree (see \cite{cerr1,ren1}).
    
    However, for unbounded domains like $\R^n$,
    neither  Sobolev embeddings  nor the solution operators
    of the equation are compact, and hence the methods for 
    proving the  LDP of the equation in bounded domains
    do not apply to the case of unbounded domains.
    The non-compactness of Sobolev embeddings on unbounded
    domains is actually a major obstacle for studying the 
    dynamics and the long term
    behavior of solutions of stochastic partial differential equations
    defined on unbounded domains,  
    which include the existence
    and stability  of invariant measures
    and random attractors.

    As far as  the author is aware,  there is no result available
    in the literature on the LDP of   stochastic 
    parabolic equations with  a superlinear drift
     defined on unbounded domains.
     The purpose of the present paper is to  solve this problem
     and establish the LDP of the stochastic equation \eqref{intr1}
     with a polynomial drift of any degree
     defined on the entire space $\R^n$.

     To that end, we will adopt the idea of uniform tail-ends
     estimates of solutions to
     circumvent the difficulty caused by the non-compactness
     of Sobolev embeddings   on $\R^n$.
     More precisely, we will show the solutions of the control equation
     for \eqref{intr1} are uniformly small over a finite time interval
     when the space variable is sufficiently large (see Lemma \ref{tail}).
     Such uniform smallness of solutions in far field   
      combined with the compactness of     embeddings in bounded domains
      will allow us to prove the tightness of a family of solutions
      of the stochastic equation and the precompactness of a family
      of solutions of the control equation
      defined on unbounded domains  when the controls vary over a
      bounded set (see Lemma \ref{wc_sol}).
      The precompactness of solutions of the control equation
      is   essential for proving the convergence in distribution
       of the solutions of the
      stochastic equation  as $\eps \to 0$
      (see Lemma \ref{cso}) , which in turn is the key
      for establishing the LDP of the stochastic equation
      (see Theorems \ref{main} and \ref{main1}) .

      Note that 
      the stochastic reaction-diffusion equation
      \eqref{intr1} is a non-local fractional equation
      with order $\alpha \in (0,1)$, and the LDP of such
      fractional equations has not been reported in the literature
      even if the domain is bounded.
      It seems  that the present paper
      is the first one to deal with the LDP of fractional equations
      on unbounded domains.
      We  point out that  the results of this paper 
      are also valid for the standard reaction-diffusion
     when $\alpha=1$ 
     in both bounded and unbounded domains.
     Actually, the proof of the LDP  
     for  $\alpha =1$ is simpler than the case $\alpha\in (0,1)$.

      We remark that the idea of uniform tail-ends estimates
      was initially introduced in \cite{wangPD1999} to prove
      the  existence of global attractors  of deterministic equations, 
      and this is the first time such tail-estimates
      are employed to 
      show  the LDP of stochastic 
      partial differential equations defined on unbounded domains.
      The approach of this paper  can be applied to a broad class
      of  stochastic   differential  equations such the hyperbolic equations and
      the functional equations, which 
      we plan to further investigate in the future,

       It is worth mentioning that if the nonlinear drift 
       has a linear growth rate, then the LDP of the stochastic equations
       can be established by the 
       standard weak convergence method even if
       the domain is unbounded,  see, e.g., \cite{roc1}.
       However, in the present paper, we deal with a  polynomial drift
       of arbitrary order, and  the standard method does not apply
       in this case. This is why we must appeal to the 
       approach  of the uniform tail-ends estimates to handle
       the tightness and precompactness of solutions on unbounded domains.

     In the next section, we borrow basic results 
     on   the weak 
     convergence method for LDPs.
     In Section 3, we discuss 
      the existence and uniqueness
     of solutions of \eqref{intr1}-\eqref{intr3}
    under certain conditions. 
     Section 4 is devoted to the LDP
     of \eqref{intr1}-\eqref{intr3}
     in   $C([0,T], L^2(\R^n))
     \bigcap L^2(0,T; H^\alpha (\R^n ))$.
     In the last section, we prove 
     the LDP
     of \eqref{intr1}-\eqref{intr3}
     in   
       $C([0,T], L^2(\R^n))
     \bigcap L^2(0,T; H^\alpha (\R^n))
     \bigcap L^p(0,T; L^p(\R^n))$
     under further  dissipative assumptions.

 \section{Weak convergence method 
 for  large deviations}
 
 In this section, we recall the
 weak convergence  method  for the LDP
   of a family
 of random variables
 that was   introduced
 in \cite{bud1, dup1}.

   Let $(\Omega, \mathcal{F}, 
   \{ { \mathcal{F}} _t\} _{t\ge 0},  P )$
be a  complete filtered probability space,
and $l^2$ be the Hilbert space
of square summable   sequences of real numbers.
Assume  $\{W(t)\}_{t\in [0,T]} $
with $T>0$  is 
a cylindrical Wiener process
with identity covariance operator 
in   $l^2$
with respect to 
$(\Omega, \mathcal{F}, 
   \{ { \mathcal{F}} _t\} _{t\ge 0},  P )$,
   which means
   that there exists a   separable 
   Hilbert space $U$
   such that
    the embedding
  $l^2 \hookrightarrow U$ is   Hilbert-Schmidt
  and $W(t)$ takes values in $U$.

Suppose  $\cale$ 
is  a polish space,   
 $\calg^\eps: C([0,T], U) \to \cale$ 
 is  a
 measurable map, and 
 $
 X^\eps = \calg ^\eps (W)$
 for every
 $\eps>0$.
 For every  $N>0$,  define
  \be\label{pre0a}
S_N
=\{v\in L^2(0,T; l^2): \int_0^T \| v(t)\|_{l^2}^2 dt
\le N\}.
\ee
Then $S_N$ is a polish space
endowed with the weak topology, which is assumed
throughout the paper.
Let $\cala$
 be the space of all
 $ l^2$-valued stochastic processes
 $v$
 which are progressively measurable
 with respect to $\{\calf_t\}_{t\in [0,T]}$
 and
 $\int_0^T \| v (t)\|^2 dt<\infty$
 $P$-almost surely.
 Let $ \cala_N$ be a subset of $\cala$ as given by
  \be\label{pre0b}
 \cala_N
 =\{v\in \cala: v (\omega) \in S_N
 \ \text{for almost  all } \omega \in \Omega
 \}.
\ee

 \begin{defn}
 A function $I : \cale \to [0, \infty]$
 is called a rate function
 on $\cale$  if
 it is lower  semi-continuous in $\cale$.
 A rate function $I$ on $\cale$ is 
 said to be a good
 rate function on $\cale$ if for every
 $0\le C <\infty $, the level set
 $\{x\in \cale: I(x) \le C\}$ is a compact
 subset of $\cale$.
 \end{defn}
 
 \begin{defn}
 The family $\{X^\eps\}$ is said to satisfy the
LDP
 in $\cale$ with a
 rate function $I:\cale \to [0,\infty]$ if 
 for every  Borel subset $B$ of $\cale$,
 $$
 -\inf_{x\in B^\circ}
  I(x)
  \le \liminf _{\eps \to 0}
  \eps \log 
  P(X^\eps \in B )
  \le
     \limsup  _{\eps \to 0}
  \eps \log 
  P(X^\eps \in B )
  \le
   -\inf_{x\in \overline{B} }
  I(x).
  $$
  where $B^\circ$ and $\overline{B}$ are
  the interior and the closure of $B$ in $\cale$,
  respectively.
 \end{defn}

    To prove the LDP 
   of $X^\eps$ by the weak convergence method, 
   the following conditions are needed:  
  there exists a measurable map
  $\calg^0:    C([0,T], U) \to \cale$
  such that
  \begin{enumerate}
  \item[(\bf {H1})] \    If $N<\infty$ and
  $\{v^\eps\}\subseteq \cala_N$
  such that $\{v^\eps\}$
  converges  in distribution
  to $v$ as 
  $S_N$-valued random variables,
  then
  $\calg^\eps \left (
  W +\eps^{-\frac 12} \int_0^{\cdot}
  v^\eps (t) dt
  \right )$
  converges in distribution
  to 
    $\calg^0 \left (
    \int_0^{\cdot}
  v  (t) dt
  \right )$.
  
  \item[(\bf {H2})]   \  For every $N<\infty$, the
  set  $\left \{
  \calg^0 (\int_0^{\cdot} v(t) dt):\
  v \in S_N
  \right \}$
  is a compact subset of $\cale$.
    \end{enumerate}

  Define $I: \cale \to [0, \infty]$ by,
  for every $x\in \cale$,
 \be\label{pre1}
  I(x)= \inf 
  \left \{
  {\frac 12} \int_0^T \| v(t)\|^2_{l^2} dt:\
   v\in L^2(0,T;  {l^2})
   \ \text{such that}\  
  \calg^0 \left (\int_0^\cdot v(t) dt\right ) =x
  \right \},
 \ee
  with the convention that
  the infimum over an  empty set
   is   $\infty$.

  It is known,     by   ${\bf (H2)}$, 
 the  map $I$ is a good
  rate function on $\cale$.
  Moreover,  ${\bf (H1)}$ and
  ${\bf (H2)}$ are sufficient for 
  the family $\{X^\eps\}$  to satisfy the 
  LDP  in $\cale$ with rate function $I$
  (see, \cite[Theorem 4.4]{bud1}).
  
  \begin{prop}\label{LP1}
  If $\{\calg^\eps\}$ satisfies
    {\rm {({\bf H1})-({\bf H2})}}, then
    the family
    $\{X^\eps\}$ 
    satisfies  
    the LDP  in $\cale$ with rate function $I$
  as defined by \eqref{pre1}.
  \end{prop}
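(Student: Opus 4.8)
The plan is to recognize that Proposition~\ref{LP1} is nothing but the abstract equivalence theorem of Budhiraja and Dupuis: the hypotheses \textbf{(H1)}--\textbf{(H2)} are precisely the sufficient conditions imposed there, and the conclusion is \cite[Theorem~4.4]{bud1}. So the ``proof'' amounts to matching our setup---$\cale$ Polish, $W$ a cylindrical Wiener process realized in the auxiliary Hilbert space $U$, and $\calg^\eps$ measurable---to theirs and then quoting that theorem. As already observed, \textbf{(H2)} guarantees, by a routine compactness and lower-semicontinuity argument, that the functional $I$ defined in \eqref{pre1} is a good rate function on $\cale$; this is exactly what makes the large deviation principle equivalent to the associated Laplace principle on the Polish space $\cale$.

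For completeness, here is how one would argue if \cite[Theorem~4.4]{bud1} were not available. Since $\cale$ is Polish and $I$ is good, it suffices (by the standard equivalence between the LDP with a good rate function and the Laplace principle on a Polish space) to prove that for every $h\in C_b(\cale)$, $-\eps\log\E[e^{-h(X^\eps)/\eps}]\to\inf_{x\in\cale}\{h(x)+I(x)\}$ as $\eps\to0$. The main tool is the Bou\'e--Dupuis type variational representation for exponential functionals of $W$: for every bounded Borel $f$ on $C([0,T],U)$,
\[
-\eps\log\E\big[e^{-f(W)/\eps}\big]=\inf_{v\in\cala}\E\Big[\tfrac12\int_0^T\|v(t)\|_{l^2}^2\,dt+f\big(W+\eps^{-1/2}{\textstyle\int_0^\cdot}v(t)\,dt\big)\Big],
\]
applied with $f=h\circ\calg^\eps$. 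For the Laplace upper bound, fix $\delta>0$, choose $x_0$ with $h(x_0)+I(x_0)\le\inf_x\{h(x)+I(x)\}+\delta$ and, by \eqref{pre1}, a deterministic control $v_0\in L^2(0,T;l^2)$ with $\calg^0({\textstyle\int_0^\cdot}v_0\,dt)=x_0$ and $\tfrac12\int_0^T\|v_0\|_{l^2}^2\,dt\le I(x_0)+\delta$; inserting the constant control $v_0$ into the representation and using \textbf{(H1)} with $v^\eps\equiv v_0$ to obtain $\calg^\eps(W+\eps^{-1/2}{\textstyle\int_0^\cdot}v_0\,dt)\to x_0$ in distribution gives $\limsup_{\eps\to0}(-\eps\log\E[e^{-h(X^\eps)/\eps}])\le\tfrac12\int_0^T\|v_0\|_{l^2}^2\,dt+h(x_0)\le\inf_x\{h(x)+I(x)\}+2\delta$. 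For the Laplace lower bound, pick for each $\eps$ a control $v^\eps\in\cala$ that is $\eps$-optimal in the representation; an a priori bound in terms of $\|h\|_\infty$, together with a truncation, shows $v^\eps$ may be taken in $\cala_N$ for some $N=N(\|h\|_\infty)$, so (by weak compactness and metrizability of $S_N$) along a subsequence $v^\eps\to v$ in distribution; then \textbf{(H1)} yields $\calg^\eps(W+\eps^{-1/2}{\textstyle\int_0^\cdot}v^\eps\,dt)\to\calg^0({\textstyle\int_0^\cdot}v\,dt)$ in distribution, and Skorokhod's representation together with the weak lower semicontinuity of $v\mapsto\int_0^T\|v\|_{l^2}^2\,dt$ and Fatou's lemma give $\liminf_{\eps\to0}(-\eps\log\E[e^{-h(X^\eps)/\eps}])\ge\E[\tfrac12\int_0^T\|v\|_{l^2}^2\,dt+h(\calg^0({\textstyle\int_0^\cdot}v\,dt))]\ge\inf_x\{h(x)+I(x)\}$, the last inequality because $I(\calg^0({\textstyle\int_0^\cdot}v\,dt))\le\tfrac12\int_0^T\|v\|_{l^2}^2\,dt$ by \eqref{pre1}.

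I do not expect a genuine obstacle at this abstract level: Proposition~\ref{LP1} is used as a black box from \cite{bud1}, and the work here is bookkeeping. The real difficulty of the paper is to verify \textbf{(H1)} and \textbf{(H2)} for the concrete fractional stochastic equation \eqref{intr1}--\eqref{intr3} on $\R^n$---where the non-compactness of the Sobolev embeddings must be circumvented by the uniform tail-ends estimates---and that is done in Sections~4--5 (Lemmas~\ref{tail}, \ref{wc_sol} and \ref{cso}). If one insisted on a fully self-contained proof of Proposition~\ref{LP1}, the technically heaviest ingredients would be the variational representation formula above and the measurable-selection, joint-tightness and Skorokhod arguments needed to make the two limit passages rigorous.
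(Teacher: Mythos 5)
Your proposal takes exactly the paper's approach: Proposition \ref{LP1} is not proved in the text but is simply quoted from \cite[Theorem 4.4]{bud1}, with ({\bf H2}) ensuring that $I$ in \eqref{pre1} is a good rate function. Your supplementary sketch of the Laplace-principle argument via the variational representation is a correct outline of how that cited theorem is itself established, but it goes beyond anything the paper supplies.
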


\section{Existence and uniqueness of solutions} 
\setcounter{equation}{0}

In this section, we  first describe  the assumptions on the
nonlinear terms in \eqref{intr1} and then discuss 
the 
existence and uniqueness of solutions of 
\eqref{intr1}-\eqref{intr3}.

Let $\mathcal{S}$ be the Schwartz space 
of rapidly decaying 
$C^\infty$ functions on $\R^n$.  Then
 the  fractional Laplace  operator
 $(-\Delta )^\alpha $ for $0<\alpha<1$ is  defined  by,
  for $u\in \mathcal{S}$,
$$
(-\Delta)^\alpha  u (x)
=- {\frac 12} C(n,\alpha) 
\int_{\R^n}
{ \frac {u(x+y) + u(x-y) -2 u(x)}{|y|^{n+2\alpha}}     } dy,
\quad x \in \R^n,
$$
where  $C(n,\alpha) 
 =\frac{\alpha 4^\alpha\Gamma(\frac{n+2\alpha}{2})}
{\pi^\frac{n}{2}\Gamma(1-\alpha)}
$.
 By  \cite{dine1}
we find that 
for any $u \in {\mathcal{S}}$,
\be\label{del2}
(-\Delta )^\alpha u
= 
  {\mathcal{F}^{-1}}
(|\xi|^{2 \alpha} (\mathcal{F} u)), \quad \xi \in \R^n,
\ee
where
${\mathcal{F}}$    and 
${\mathcal{F}}^{-1}$ are 
the Fourier transform   and the 
inverse 
 Fourier transform, respectively.
 For $0< \alpha<1$,  the  fractional Sobolev space
    $H^ \alpha (\R^n)$  is defined by
 $$
 H^ \alpha (\R^n)
 =\left \{
 u\in L^2(\R^n):
 \int_{\R^n} \int_{\R^n}
 {\frac {|u(x)- u(y)|^2} {|x-y|^{n+2 \alpha} }} dxdy <\infty
 \right \},
 $$
 endowed with   norm
$$
 \| u\|_{H^ \alpha ({\R^n}) }
 =
 \left (
 \int_{\R^n} |u(x)|^2 dx 
  +
  {\frac {1}2}C(n,\alpha)
  \int_{\R^n} \int_{\R^n}
 {\frac {|u(x)- u(y)|^2} {|x-y|^{n+2 \alpha} }} dxdy
 \right )^{\frac 12},
\   \forall    u \in H^ \alpha (\R^n),
$$
and inner product  
 $$
 (u, v)_{H^ \alpha (\R^n)}
 =
  \int_{\R^n} u(x)v(x)  dx 
  +  {\frac {1}2}C(n,\alpha)
  \int_{\R^n} \int_{\R^n}
 {\frac {(u(x)- u(y)) (v(x) -v(y))} {|x-y|^{n+2 \alpha} }} dxdy,
 \ \forall   u, \ v \in H^ \alpha (\R^n).
 $$

It follows from \cite{dine1} 
that the norm and the  inner product of
$H^ \alpha  ({\R^n})$ can be calculated as 
follows: for all $u, v \in H^ \alpha  ({\R^n})$, 
$$
 \| u\|_{H^ \alpha (\R^n)}^2
 = \| u\|^2_{L^2(\R^n)}
 +   \| (-\Delta)^{{\frac  \alpha2}} u \|^2_{L^2(\R^n)}, 
$$
and 
 $$
 (u, v)_{H^ \alpha (\R^n)}
 =
  \int_{\R^n} u(x)v(x)  dx 
  +
  \int_{\R^n}
  \left ( (-\Delta)^{{\frac  \alpha2}} u (x)
  \right )
  \left (
    (-\Delta)^{{\frac  \alpha2}} v (x)
    \right ) dx.
    $$

 In the sequel,    we  will write
$H= 
  L^2(\R^n) $  and 
$V= 
H^ \alpha (\R^n) $.
Then we have
$V \hookrightarrow H  = H^* \hookrightarrow V^*$
where $H^*$   and $V^*$ are the dual spaces
of $H$  and $V$, respectively, and $H^*$ is identified with
$H$ by Riesz\rq{}s representation  theorem.
The norm and the inner product of $H$ are denoted by
   $\| \cdot \|$  and $(\cdot, \cdot)$, respectively.
   We  also use $\call_2(H_1,H_2)$   for the space of Hilbert-Schmidt
   operators  from  
   a 
   separable 
   Hilbert space  $H_1$ to another
   separable
    Hilbert space  $H_2$
   with norm $\| \cdot \|_{\call_2(H_1,H_2)}$.

For the nonlinear drift  
$F$ in  \eqref{intr1}, 
we  assume  that 
$F: \R \times \o \times \R$
$\to \R$ is      continuous     
 such that  for all
$t, u, u_1, u_2  \in \R$   and  $x \in \o$, 
\be 
\label{F1}
 F(t,x,0) =0,
 \ee
\be 
\label{F2}
F (t, x, u) u
\ge \lambda_1 |u|^p -\psi_1(t,x),
\ee
\be 
\label{F3}
|F(t, x, u_1)- F(t, x, u_2) |   \le
\lambda_2 \left (\psi_2 (t,x)
+
 |u_1|^{p-2}  + |u_2|^{p-2}
 \right ) | u_1-u_2|,
\ee
\be 
\label{F4}
{\frac {\partial F}{\partial u}} (t, x, u)  \ge -  \psi_3(t,x) ,
\ee
where  $\lambda_1>0$,
  $\lambda_2>0$,
   $ p\ge  2$, 
$\psi_1 \in L^1_{loc} (\R, L^1(\R^n))$,
$\psi_2, \psi_3 \in L_{loc}^\infty (\R, L^\infty(\R^n))
\bigcap  L_{loc}^1(\R, L^1(\R^n))$.

Since the case  $p=2$ is simpler to deal with,
from now on, we assume $p>2$.
 It follows from \eqref{F1}  and \eqref{F3}
that there exists $\lambda_3>0$  and
$\psi_4\in L_{loc}^\infty (\R, L^\infty(\R^n))
\bigcap  L_{loc}^1(\R, L^1(\R^n))$ such that
 for   $t, u \in \R$ and   $x\in \o$,
 \be 
\label{F5}
|F(t, x, u) |   \le
\lambda_3  |u|^{p-1}  + \psi_4 (t,x).
\ee
 In addition, by \eqref{F3} we have
 for   $t, u \in \R$ and   $x\in \o$,
 \be 
\label{F6}
|{\frac {\partial F}{\partial u}}
 (t, x, u )  |   \le
\lambda_2 \left (\psi_2 (t,x)
+
 2 |u |^{p-2}   
 \right ).
\ee

   For the diffusion term
    in \eqref{intr1},  we assume
    that
        $\sigma:
      \R \times \R^n \times \R
      \to l^2$ is a map given by
  \be\label{sig0}
     \sigma (t,x, s)
     = \sigma_1  (t,x)
     +   
       \kappa (x)   \sigma_{2} ( t,x, s)  
        , 
       \quad \forall \ t\in \R, \ x\in \R^n, \ s\in \R,
\ee
     where     
   $\kappa \in    L^2(\R^n) \bigcap L^\infty(\R^n)$,
   $\sigma_1: \R \to L^2(\R^n, l^2 )$ is continuous,
   and $\sigma_2(t,x,s)$ is Lipschitz continuous  
   in $l^2$ 
   with respect to $s\in \R$ uniformly
   in $t\in  \R$  and $x\in \R^n$.
   More precisely, we  assume that
    $ 
     \sigma_2 (t,x, s)
     =\left \{ 
        \sigma_{2,k} ( t,x, s)  \right \}_{k=1}^\infty
      $  satisfies the condition:   
    for every $k\in \N$,
   there exist  positive numbers $\alpha_k$, $\beta_k$ and $\gamma_k$
   such that   for all 
   $t\in \R$,
   $x\in \R^n$  and $s, s_1, s_2 \in \R$,
   \be\label{sig1}
   |\sigma_{2,k} (t,x, s_1) -\sigma_{2,k} (t,x,s_2) |
   \le \alpha_k |s_1  -s_2 |,
   \ee
   and
   \be\label{sig2}
   |\sigma_{2,k}  (t,x,s)  |
   \le \beta_k    +\gamma _k  |s|,
   \ee
   where
  \be\label{sig3}
  \sum_{k=1}^\infty  ( \alpha_k^2
   +\beta_k^2 + \gamma_k^2  ) <\infty .
     \ee
     For convenience,
     we write  
     $\sigma_1 (t,x)
     =\left \{ \sigma_{1,k} (t,x)\right \}_{k=1}^\infty
     \in l^2$
     for $t\in \R$ and  $x\in \R^n$.
     Then we have
     $$
     \sigma   (t,x,s) 
     = 
     \left \{ \sigma_{1,k}  (t,x)
     +   
       \kappa (x)   \sigma_{2,k} ( t,x, s)  
       \right \}_{k=1}^\infty
        , 
       \quad \forall \ t\in \R, \ x\in \R^n, \ s\in \R.
  $$
  For every $u\in H$ and $t\in \R$, 
  by \eqref{sig2} and \eqref{sig3} we see that
  $\sigma (t,\cdot, u) \in L^2(\R^n, l^2)$ and
  $$
  \| \sigma (t, \cdot, u)\|^2_{L^2(\R^n, l^2)}
  =\sum_{k=1}^\infty
  \int_{\R^n}
  \left |  \sigma_{1,k}  (t,x)
     +   
       \kappa (x)   \sigma_{2,k} ( t,x, u(x) ) 
       \right |^2 dx
  $$
  \be\label{sig4}
  \le
  2 \sum_{k=1}^\infty 
  \|\sigma_{1, k} (t) \|^2
  + 4\| \kappa \|^2
  \sum_{k=1}^\infty \beta_k^2
  +4 \|\kappa\|^2_{L^\infty (\R^n)}
  \| u \|^2 \sum_{k=1}^\infty
  \gamma_k^2.
  \ee
  
  Given $u\in H$  and $t\in \R$, define an operator
  $\sigma (t, u): l^2\to H$ by
\be\label{sig5}
  \sigma (t,u) (v) (x)
  =\sum_{k=1}^\infty
  \left (
   \sigma_{1,k}  (t,x)
     +   
       \kappa (x)   \sigma_{2,k} ( t,x, u(x) )
       \right )
         v_k,
  \quad \forall \ 
   v=\{v_k\}_{k=1}^\infty \in l^2,
   \ \ x\in \R^n.
\ee
By \eqref{sig4}  and \eqref{sig5} we have
for all      $u\in H$,  $t\in \R$
and $v\in l^2$,
$$
 \|  \sigma (t,u) (v)  \|^2
 \le
  \| \sigma (t, \cdot, u)\|^2_{L^2(\R^n, l^2)}
  \| v\|^2_{l^2},
$$
which shows that
   $\sigma (t, u): l^2\to H$ is a linear bounded operator.
   Actually, this operator is  Hilbert-Schmidt 
   with norm
 $$
 \|  \sigma (t,u)   \|^2_{\call_2 (l^2, H)}
 =
   \| \sigma (t, \cdot, u)\|^2_{L^2(\R^n, l^2)},
   $$
  which  along with \eqref{sig4}
  shows that 
  for all      $u\in H$  and   $t\in \R$,
   \be\label{sig6}
 \|  \sigma (t,u)   \|^2_{\call_2 (l^2, H)}
 \le
 L_1 (1+  \|u \|^2 )
  + 2 \sum_{k=1}^\infty 
  \|\sigma_{1, k} (t) \|^2,
  \ee
  where $L_1= 
  4\| \kappa \|^2
  \sum\limits_{k=1}^\infty \beta_k^2 +
  4 \|\kappa\|^2_{L^\infty (\R^n)}
   \sum\limits_{k=1}^\infty
  \gamma_k^2$.

 On the other hand, by \eqref{sig1} and \eqref{sig5}
  we have for all $u_1, u_2
  \in H$,
  $$
  \| \sigma (t,u_1) -\sigma (t, u_2)\|^2_{\call_2 (l^2, H)}
  =\sum_{k=1}^\infty
  \int_{\R^n}
 \kappa^2  (x) \left |
 \sigma_{2,k} (t,x, u_1(x))
 -
 \sigma_{2,k} (t,x, u_2(x))
 \right |^2 dx
 $$
 $$
 \le
 \sum_{k=1}^\infty
 \alpha_k^2  \int_{\R^n}
 \kappa^2  (x) \left |
  u_1(x) 
 -  u_2(x) 
 \right |^2 dx,
 $$
  which shows that
  for all $t\in \R$,
  \be\label{sig7}
  \| \sigma (t,u_1) -\sigma (t, u_2)\|^2_{\call_2 (l^2, H)}
 \le
  \| \kappa \|^2_{L^\infty(\R^n)}
  \| u_1-u_2\|^2
  \sum_{k=1}^\infty
 \alpha_k^2,
 \quad \forall \   u_1, u_2\in H.
 \ee

%
%
%

  With above notation,
  system \eqref{intr1}-\eqref{intr3}
  can be rewritten as 
  \be
  \label{sys1}
  du^{\eps} (t)
  + (-\Delta)^ \alpha  u^{\eps} (t)  dt
  + F(t,x, u^{\eps}(t)) dt
  $$
  $$
  =  
    g(t,x)   dt
  +\sqrt{\eps} 
  \sum_{k=1}^\infty
  \left (
  \sigma_{1,k}  (t,x)
     +   
       \kappa (x)   \sigma_{2,k} ( t,x, u^{\eps}(t) ) 
       \right ) dW_k
   ,
  \ee 
  with   initial condition 
 \be\label{sys2}
 u^{\eps}( 0, x ) = u_0 (x),   \quad x\in \R^n,
 \ee 
  where $\{W_k\}_{k=1}^\infty$
  is a sequence of independent  real-valued
  standard 
  Wiener processes.

A solution of system \eqref{intr1}-\eqref{intr3}
(or system \eqref{sys1}-\eqref{sys2}) is understood
in the following sense.

 \begin{defn}
 \label{defnsol}
Given  $T>0$,
$\eps\in (0,1)$
  and   $u_0 \in L^2(\Omega, \calf_0; H)$,
   a continuous $H$-valued $\calf_t$-adapted
 stochastic process
 $u^{\eps}$ is  called a solution of \eqref{intr1}-\eqref{intr3}
 on $[0,T]$
 if
\be\label{defnsol_1}
 u^{\eps}\in L^2(\Omega, C([0, T], H))
 \bigcap L^2(\Omega, L^2(0, T; V))
 \bigcap L^p(\Omega, L^p(0, T; L^p(\R^n))),
\ee
   such that   for all  $t\in [0,T]$
 and $\xi \in V\bigcap L^p(\R^n)$,
\be\label{defnsol_2}
 ( u^{\eps}(t), \xi )
 + \int_0^t
 ( (-\Delta )^{\frac \alpha 2} u^{\eps}(s) , (-\Delta )^{\frac \alpha 2} \xi) ds
 +
 \int_0^t\int_\o F(s, x, u^{\eps}(s)) \xi (x) dx  ds
 $$
 $$
 =(u_0, \xi)
    +\int_0^t (g(s),  \xi  ) ds
  +\sqrt{\eps} \int_0^t
  ( \xi , 
  \sigma (s, u^{\eps}(s)) dW )
\ee
 $P$-almost surely.
 \end{defn}
 
 It follows from \eqref{F5}  and \eqref{defnsol_1}
 that  if $u^{\eps}$ is a solution of \eqref{intr1}-\eqref{intr3},
  then 
 $$
 (-\Delta)^\alpha u^{\eps} \in  L^2(\Omega, L^2(0,T; V^*) )
  \ \text{  and } \
  F\in  L^q(\Omega, L^q(0,T; L^q(\R^n))),
  $$
  where $p$  and $q$ are conjugate exponents;
  that is, ${\frac 1q} +{\frac 1p} =1$.
   As such, $u^{\eps}$ satisfies  
\eqref{defnsol_2} if and only if 
  for all  $t\in [0,T]$,  
\be\label{defnsol_3}
   u^{\eps}(t) 
 + \int_0^t
   (-\Delta )^{  \alpha  } u^{\eps}(s)  ds
 +
 \int_0^t  F(s, \cdot , u^{\eps}(s))    ds
 =u_0
   +\int_0^t  g(s)  ds
  + \sqrt{\eps} \int_0^t
  \sigma (s, u^{\eps}(s)) dW
\ee
 in $ (V\bigcap L^p(\R^n) )^*$, $P$-almost surely.

  Under conditions
  \eqref{F1}-\eqref{F4}
  and \eqref{sig0}-\eqref{sig3},
  it is known that problem \eqref{intr1}-\eqref{intr3}
  possesses a unique solution
  in the sense of Definition \ref{defnsol}
  (see \cite{wangJDE2019}),  which is stated below.

 \begin{prop} \label{posed1}
 Suppose   \eqref{F1}-\eqref{F4}
  and \eqref{sig0}-\eqref{sig3}  hold.
  If   $T>0$,
  $\eps\in (0,1)$
  and  $u_0\in L^2(\Omega,\calf_0; H)$,
  then   problem \eqref{intr1}-\eqref{intr3}
 has a unique solution $u^{\eps}$ which 
   satisfies 
   the energy equation: 
      for all    $t\in [0, T]$, 
           $$
       \| {u}^{\eps}  (t) \|^2
       + 2\int_0^t \| (-\Delta)^{\frac \alpha{2}} {u}^{\eps}  (s)\|^2
       ds
       + 2\int_0^t \int_{\R^n}
        F (s, x, u^{\eps}(s) )   {u}^{\eps}  (s)    dx ds
       $$
       $$
       = \|u_{0} \|^2 
                + 2 \int_0^t (u^{\eps} (s) , g (s) )ds
        +
       2\sqrt{\eps}
       \int_0^t 
     ( {u}^{\eps} (s) , 
        \sigma (s, u^{\eps}(s) )  dW )
       + \eps  \int_0^t \|  \sigma (s, u^{\eps}(s) )  \|^2_{\call_2(l^2,H)}
       ds,
    $$  $P$-almost surely. 
    Moreover,  the following   uniform estimates
    are valid:
 $$
\| u^{\eps}  \|_{L^2(\Omega, C([0, T], H) ) }^2
+
\| u ^{\eps}  \|_{L^2 (\Omega, L^2(0, T;  V ) )}^2
+
\| u ^{\eps} \| _{L^p(\Omega, L^p(0, T;  L^p(\R^n) ) )}^p
$$
$$
\le M_1
\left (1+ 
\| u_0 \|^2_{L^2(\Omega, H)}
+ \| g\|^2_{L^2(0,T; H)}
\right ),
$$
where $M_1=M_1(T) $ is a positive    number
depending on $T$ but not on
 $u_0$ or $\eps$.
 \end{prop}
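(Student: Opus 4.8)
The plan is to cast \eqref{sys1}--\eqref{sys2} as a variational stochastic evolution equation in the Gelfand triple $V\cap L^p(\o)\hookrightarrow H\hookrightarrow(V\cap L^p(\o))^*$ and to invoke the standard monotone-operator theory. Writing $A=(-\Delta)^\alpha$ and $\Phi(t,u)=Au+F(t,\cdot,u)$, I would first verify the four structural conditions of the variational framework. \emph{Hemicontinuity} of $\lambda\mapsto{}_{(V\cap L^p)^*}\langle\Phi(t,u+\lambda w),\xi\rangle$ follows from continuity of $F$ in its last argument, the growth bound \eqref{F5}, and dominated convergence. \emph{Monotonicity}: since $\langle Au_1-Au_2,u_1-u_2\rangle=\|(-\Delta)^{\alpha/2}(u_1-u_2)\|^2\ge0$ and \eqref{F4} gives $\big(F(t,x,u_1)-F(t,x,u_2)\big)(u_1-u_2)\ge-\psi_3(t,x)|u_1-u_2|^2$ pointwise, one obtains
$$
2\big\langle\Phi(t,u_1)-\Phi(t,u_2),\,u_1-u_2\big\rangle\le 2\|\psi_3(t)\|_{L^\infty(\o)}\|u_1-u_2\|^2 ,
$$
which, combined with the Lipschitz estimate \eqref{sig7} for $\sigma$, furnishes the joint monotonicity of $(\Phi,\sigma)$ with an $L^1_{loc}$ time weight. \emph{Coercivity} comes from \eqref{F2}: $2\langle\Phi(t,u),u\rangle\ge2\|u\|_V^2-2\|u\|^2+2\lambda_1\|u\|_{L^p(\o)}^p-2\|\psi_1(t)\|_{L^1(\o)}$. \emph{Growth}: by \eqref{F5}, $\|\Phi(t,u)\|_{(V\cap L^p)^*}\le\|u\|_V+\|F(t,\cdot,u)\|_{L^q(\o)}\le\|u\|_V+C\big(\|u\|_{L^p(\o)}^{p-1}+\|\psi_4(t)\|_{L^q(\o)}\big)$, while \eqref{sig6} gives the required linear bound on $\|\sigma(t,u)\|^2_{\call_2(l^2,H)}$.

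Granting these, I would run the usual Galerkin scheme: project onto $\mathrm{span}\{e_1,\dots,e_n\}$, solve the resulting finite system of Itô SDEs, and derive $n$-uniform a priori bounds by applying Itô's formula to $\|u_n^\eps(t)\|^2$, using coercivity to absorb the drift, Young's inequality on $2(u_n^\eps,g)$, the Burkholder--Davis--Gundy inequality together with \eqref{sig6} on the stochastic term, and the facts that $\eps\in(0,1)$, $\psi_1\in L^1_{loc}$, $\sigma_1\in C(\R;L^2(\o,l^2))$; Gronwall's inequality then produces the asserted estimate with a constant $M_1=M_1(T)$ independent of $n$ and $\eps$, and hence, through \eqref{F5} and \eqref{sig6}, uniform bounds on $F(\cdot,\cdot,u_n^\eps)$ in $L^q(\Omega;L^q(0,T;L^q(\o)))$, on $(-\Delta)^\alpha u_n^\eps$ in $L^2(\Omega;L^2(0,T;V^*))$, and on $\sigma(\cdot,u_n^\eps)$ in $L^2(\Omega;L^2(0,T;\call_2(l^2,H)))$. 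Extracting a subsequence, $u_n^\eps\rightharpoonup u^\eps$ (weakly, and weak-$*$ in $L^2(\Omega;L^\infty(0,T;H))$), $F(\cdot,\cdot,u_n^\eps)\rightharpoonup\overline F$, $(-\Delta)^\alpha u_n^\eps\rightharpoonup(-\Delta)^\alpha u^\eps$ and $\sigma(\cdot,u_n^\eps)\rightharpoonup\overline\sigma$ in the appropriate spaces, so the limit $u^\eps$ satisfies \eqref{defnsol_3} with $\overline F,\overline\sigma$ in place of $F(\cdot,\cdot,u^\eps),\sigma(\cdot,u^\eps)$. Identifying $\overline F=F(\cdot,\cdot,u^\eps)$ and $\overline\sigma=\sigma(\cdot,u^\eps)$ is the crux, and I would carry it out by the Minty--Browder trick: using the Itô formula for $\|u_n^\eps(t)\|^2$ and $\|u^\eps(t)\|^2$, weak lower semicontinuity, the monotonicity inequality above and \eqref{sig7}, one tests against $u^\eps-\phi$ for arbitrary $\phi\in L^2(\Omega;L^2(0,T;V\cap L^p(\o)))$ weighted by $\exp\!\big(-2\!\int_0^t\|\psi_3(s)\|_{L^\infty(\o)}\,ds\big)$ and then lets $\phi\to u^\eps$.

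The energy equation is precisely the Itô formula for $t\mapsto\|u^\eps(t)\|^2$ applied to \eqref{defnsol_3}, which holds $P$-a.s. in this variational setting; taking expectations and running Gronwall once more reproduces the uniform estimate for the limit $u^\eps$ with the same $n$-independent constant $M_1$. For uniqueness, if $u^\eps$ and $v^\eps$ both solve the problem, I would apply the Itô formula to $\|u^\eps(t)-v^\eps(t)\|^2$: the $(-\Delta)^\alpha$ term has a favorable sign and is dropped, the drift difference contributes at most $2\|\psi_3(s)\|_{L^\infty(\o)}\|u^\eps(s)-v^\eps(s)\|^2$ by \eqref{F4}, the martingale vanishes in expectation after a standard localization, and the quadratic-variation term is at most $\eps\|\kappa\|_{L^\infty(\o)}^2\big(\sum_k\alpha_k^2\big)\|u^\eps(s)-v^\eps(s)\|^2$ by \eqref{sig7}; since $\psi_3\in L^1_{loc}(\R,L^\infty(\o))$, Gronwall forces $u^\eps=v^\eps$ in $L^2(\Omega;C([0,T],H))$. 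The hard part is the passage to the limit: because $\o$ is unbounded there is no compact Sobolev embedding, so the identification of $\overline F$ cannot lean on an Aubin--Lions argument and must rest solely on the monotone structure, which is available here globally thanks to \eqref{F4}; one also needs a version of the Itô formula valid for the \emph{intersection} triple $V\cap L^p(\o)\hookrightarrow H\hookrightarrow(V\cap L^p(\o))^*$ rather than for a single Gelfand triple. The remaining details follow the treatment of \cite{wangJDE2019}.
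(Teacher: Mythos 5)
The paper gives no proof of Proposition \ref{posed1}: it is quoted from \cite{wangJDE2019}, and your outline --- variational formulation in the triple $V\bigcap L^p(\o)\hookrightarrow H\hookrightarrow (V\bigcap L^p(\o))^*$, Galerkin approximation, a priori bounds via It\^{o}'s formula and Burkholder--Davis--Gundy, identification of the weak limits by the Minty--Browder monotonicity trick with the $\exp(-2\int_0^t\|\psi_3\|_{L^\infty})$ weight, and uniqueness from \eqref{F4} and \eqref{sig7} --- is exactly the standard route followed there, so your proposal is consistent with the proof the paper relies on. One slip worth correcting: your displayed ``monotonicity'' inequality has the wrong sign --- from $\langle Au_1-Au_2,u_1-u_2\rangle\ge 0$ and \eqref{F4} one gets $\langle\Phi(t,u_1)-\Phi(t,u_2),u_1-u_2\rangle\ge -\|\psi_3(t)\|_{L^\infty(\o)}\|u_1-u_2\|^2$, i.e.\ the bound $-2\langle\Phi(t,u_1)-\Phi(t,u_2),u_1-u_2\rangle\le 2\|\psi_3(t)\|_{L^\infty(\o)}\|u_1-u_2\|^2$ for the drift $-\Phi$ as it actually enters the equation, not the upper bound on $+2\langle\Phi(u_1)-\Phi(u_2),u_1-u_2\rangle$ you wrote (which is false, since the fractional Laplacian contribution is unbounded above); your subsequent uniqueness computation uses the correct version, so this is a typo rather than a gap.
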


  In the rest of the paper, we will investigate
  the LDP of the family $\{u^\eps\}_{\eps>0}$
  of the solutions of \eqref{intr1}-\eqref{intr3}
  in the space 
     $
     C([0, T], H )\bigcap  L^2(0, T;  V  )
     \bigcap  L^p(0, T;  L^p(\R^n)  )$
     as $\eps \to 0$.

   \section{Large deviation principles of stochastic equations} 
\setcounter{equation}{0}

This section is devoted to  the 
LDP of  problem     \eqref{intr1}-\eqref{intr3}
in the space  $
     C([0, T], H )\bigcap  L^2(0, T;  V  )
    $
     as $\eps \to 0$, which is 
   stated below.

 \begin{thm}\label{main}
  Suppose   \eqref{F1}-\eqref{F4}
  and \eqref{sig0}-\eqref{sig3}  hold, and $u^\eps$
 is the solution of 
       \eqref{intr1}-\eqref{intr3}.
   Then the family
   $\{u^\eps\}$,  as $\eps \to 0$, 
satisfies the LDP
in   $
     C([0, T], H )\bigcap  L^2(0, T;  V  )
     $
      with  good rate function
as given by \eqref{rate_LS}.
\end{thm}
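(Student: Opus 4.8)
The plan is to apply the weak convergence method, i.e.\ to verify hypotheses {\bf (H1)} and {\bf (H2)} of Section 2 and then invoke Proposition \ref{LP1}, with $\cale = C([0,T],H)\cap L^2(0,T;V)$. First I would set up the two maps. Let $\calg^\eps\colon C([0,T],U)\to\cale$ send $W$ to the unique solution $u^\eps$ of \eqref{intr1}--\eqref{intr3} given by Proposition \ref{posed1}; measurability of $\calg^\eps$ follows from pathwise uniqueness together with a Yamada--Watanabe argument. For a control $v\in L^2(0,T;l^2)$ define $\calg^0\big(\int_0^\cdot v(s)\,ds\big)=u^v$, where $u^v$ solves the skeleton equation
$$
\frac{du^v}{dt}+(-\Delta)^\alpha u^v+F(t,x,u^v)=g+\sigma(t,u^v)\,v(t),\qquad u^v(0)=u_0 .
$$
Its well-posedness in $\cale\cap L^p(0,T;L^p(\R^n))$ is obtained exactly as Proposition \ref{posed1}, the bounded-variation term $\int_0^\cdot\sigma(s,u^v(s))v(s)\,ds$ being controlled by \eqref{sig6}, \eqref{sig7} and $v\in L^2(0,T;l^2)$; the rate function \eqref{rate_LS} is then \eqref{pre1} specialized to this $\calg^0$.

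To verify {\bf (H2)} I would fix $N<\infty$ and show that $v\mapsto u^v$ is continuous from $S_N$ with its weak topology into $\cale$; since $S_N$ is weakly compact, the image is then compact. The argument has three steps. (i) Testing the skeleton equation with $u^v$ and using \eqref{F2}, \eqref{sig6}, Young's inequality and Gronwall's lemma gives a bound on $\|u^v\|_{C([0,T],H)}+\|u^v\|_{L^2(0,T;V)}+\|u^v\|_{L^p(0,T;L^p(\R^n))}$ uniform over $v\in S_N$, and a standard estimate on time increments yields equicontinuity of $t\mapsto u^v(t)$ in $V^*$. (ii) The uniform tail estimate of Lemma \ref{tail}: testing with $\theta(|x|^2/k^2)u^v$, where $\theta$ is a smooth cut-off vanishing near the origin and equal to $1$ at infinity, and bounding the nonlocal commutator $\big((-\Delta)^\alpha u^v,\theta(|x|^2/k^2)u^v\big)$ through the singular-integral formula (splitting $|y|\le 1$ and $|y|>1$), one obtains $\sup_{v\in S_N}\sup_{t\in[0,T]}\int_{|x|\ge k}|u^v(t,x)|^2\,dx\to 0$ as $k\to\infty$. (iii) Given $v_n\rightharpoonup v$ in $S_N$, steps (i)--(ii) together with the Aubin--Lions lemma on each ball $\{|x|<k\}$, where $H^\alpha\hookrightarrow L^2$ is compact, and the uniform tail bound give precompactness of $\{u^{v_n}\}$ in $C([0,T],H)$ and in $L^2(0,T;V)$; any limit is identified with $u^v$ by passing to the limit in the weak formulation, the only delicate term being $\sigma(t,u^{v_n})v_n$, which converges because $\sigma(\cdot,u^{v_n})\to\sigma(\cdot,u^v)$ strongly in $L^2(0,T;\call_2(l^2,H))$ by \eqref{sig7} while $v_n\rightharpoonup v$ weakly. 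Uniqueness of $u^v$ then forces convergence of the whole sequence. This is Lemma \ref{wc_sol}.

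To verify {\bf (H1)}, let $N<\infty$ and $v^\eps\in\cala_N$ converge in distribution to $v$ as $S_N$-valued random variables. By Girsanov's theorem, $\calg^\eps\big(W+\eps^{-1/2}\int_0^\cdot v^\eps(s)\,ds\big)$ equals the unique solution $u^{\eps,v^\eps}$ of
$$
du^{\eps,v^\eps}+(-\Delta)^\alpha u^{\eps,v^\eps}\,dt+F(t,x,u^{\eps,v^\eps})\,dt=g\,dt+\sigma(t,u^{\eps,v^\eps})v^\eps\,dt+\sqrt\eps\,\sigma(t,u^{\eps,v^\eps})\,dW ,
$$
so it suffices to prove $u^{\eps,v^\eps}\to u^v$ in distribution in $\cale$ (Lemma \ref{cso}). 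I would derive moment estimates for $u^{\eps,v^\eps}$ uniform in $\eps\in(0,1)$ and $v^\eps\in\cala_N$ (energy inequality, Burkholder--Davis--Gundy and Gronwall), together with the probabilistic analogue of the tail estimate of step (ii). Combined with the compactness of $H^\alpha\hookrightarrow L^2$ on balls, these yield tightness of the laws of $(u^{\eps,v^\eps},v^\eps,W)$ in $\cale\times S_N\times C([0,T],U)$; by the Skorokhod representation theorem one passes to an a.s.\ convergent subsequence on a new probability space, lets $\eps\to0$ in the weak formulation --- the stochastic term drops out by the $\sqrt\eps$ factor and the uniform second moment, and the term $\sigma(t,u^{\eps,v^\eps})v^\eps$ is handled as in step (iii) --- and finds that the limit solves the skeleton equation driven by the limit control, which has the law of $v$. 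Pathwise uniqueness of the skeleton solution then gives $u^{\eps,v^\eps}\to u^v$ in distribution, and Proposition \ref{LP1} completes the proof. The main obstacle throughout is the absence of compactness of $H^\alpha(\R^n)\hookrightarrow L^2(\R^n)$, which blocks the usual compactness arguments; it is circumvented by the uniform tail-ends estimate, whose proof is the most technical point because $(-\Delta)^\alpha$ does not localize, so one must carefully absorb the error created by multiplying with a spatial cut-off. A secondary difficulty is the arbitrary polynomial degree $p$ of $F$: the terms $F(t,x,u^{v_n})$ only converge weakly in $L^q(0,T;L^q(\R^n))$, so one must combine the a priori $L^p$-bound with strong $L^2$-convergence and the continuity of $F$ to identify the limit.
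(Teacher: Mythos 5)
Your overall skeleton (verify {\bf (H1)}--{\bf (H2)} and invoke Proposition \ref{LP1}, with the uniform tail-ends estimate compensating for the non-compactness of $H^\alpha(\R^n)\hookrightarrow L^2(\R^n)$) is the paper's, and your treatment of {\bf (H2)} up to the identification of the limit matches Lemmas \ref{exis_sol}, \ref{tail} and \ref{wc_sol}. But there is a genuine gap in how you claim to obtain convergence in the \emph{norm} topology of $\cale=C([0,T],H)\cap L^2(0,T;V)$. The a priori bounds you list ($L^\infty(0,T;H)\cap L^2(0,T;V)\cap L^p(0,T;L^p)$ plus equicontinuity in $V^*$, plus the tail estimate) together with Aubin--Lions/Arzel\`a--Ascoli on balls give precompactness only in $C([0,T],(V\cap L^p(\R^n))^*)$ and $L^2(0,T;H)$ --- never in $L^2(0,T;V)$ or $C([0,T],H)$ with their strong topologies, because $L^2(0,T;V)$ is the \emph{top-order} space in which you have bounds; there is no compact embedding to exploit there. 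The same objection is fatal to your {\bf (H1)} argument: tightness of the laws of $u^{\eps,v^\eps}$ in $\cale$ with the strong topology cannot be extracted from the moment estimates of Lemma \ref{gep} plus tail bounds, so the Skorokhod-representation-on-the-triple route does not get off the ground in the space where the LDP is claimed. The missing idea, which is how the paper closes both steps, is the energy (respectively It\^o) identity for the \emph{difference} equation: subtracting the two equations and testing with the difference puts the coercive term $2\int_0^t\|(-\Delta)^{\alpha/2}(u_1-u_2)\|^2\,ds$ on the left-hand side, and conditions \eqref{F4} and \eqref{sig7} let one show the right-hand side tends to zero; this yields strong convergence in $C([0,T],H)\cap L^2(0,T;V)$ with no compactness in those norms at all (Step 4 of Lemma \ref{wc_sol}, and \eqref{cso 4}--\eqref{cso 17} with the stopping times $\tau^\eps_R$ in the stochastic case).

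Structurally, the paper also avoids your tightness argument for {\bf (H1)} altogether by decomposing $u^\eps_{v^\eps}-u_v=(u^\eps_{v^\eps}-u_{v^\eps})+(u_{v^\eps}-u_v)$, where $u_{v^\eps}=\calg^0(\int_0^\cdot v^\eps)$ is the deterministic skeleton driven by the random control: the first piece goes to $0$ in probability in $\cale$ by the difference It\^o estimate, a stopping time, and the moment bound \eqref{gep 2} to control $P(\tau^\eps_R<T)$, while the second converges in distribution by applying Skorokhod only to the controls $\{v^\eps\}$ in $S_N$ and then the purely deterministic continuity result of Lemma \ref{wc_sol}. You should restructure your {\bf (H1)} step along these lines (or otherwise supply the difference-equation estimate); as written, the tightness claim in $\cale$ is a step that would fail.
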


      Next, we prepare to prove  
 Theorem \ref{main}. 
  It follows from Proposition 
   \ref{posed1}
  that  for every  $\eps\in (0,1)$,
  there exists a Borel-measurable map
  $\calg^\eps: C([0,T], U) \to  C([0, T], H )\bigcap  L^2(0, T;  V  )
     \bigcap  L^p(0, T;  L^p(\R^n)  )$
  such that the solution $u^\eps$ of
  \eqref{intr1}-\eqref{intr3} is given by
  $$
  u^\eps =\calg^\eps (W),
  \quad  \text{P-almost surely}.
  $$
  
  As usual, to
   study the  
LDP  of
  $\{u^\eps\}$ as $\eps \to 0$, we
 need to consider the  deterministic  control
  system corresponding to \eqref{intr1}.
  Given a control  $v\in L^2(0,T; l^2)$, 
  find  $u_v$  by  the controlled  equation:
   \be\label{contr1}
{\frac {d u_v (t)}{dt}}
+  (-\Delta)^\alpha u_v  (t)  
+F(t, \cdot, u_v (t) )=  
  g (t)  
 +   \sigma(t, u_v (t))  
    v (t),
\ee
 with  initial data
 \be\label{contr2}
 u_v  (0)=u_0 \in H.
 \ee
  
  A solution of \eqref{contr1}-\eqref{contr2}
  is understood  in the sense  similar 
  to  Definition \ref{defnsol}.
  Of course,    \eqref{contr1} 
  is a deterministic equation rather than a
  stochastic one as \eqref{intr1}.
  The following lemma is concerned with
 the existence and uniqueness  
  of solutions  to
 \eqref{contr1}-\eqref{contr2}.

 \begin{lem}\label{exis_sol}
 Suppose   \eqref{F1}-\eqref{F4}
  and \eqref{sig0}-\eqref{sig3}  hold.
   If 
  $v\in L^2(0,T; l^2)$,
   then problem
\eqref{contr1}-\eqref{contr2}
 has a unique solution
 $u_v \in C([0,T], H)\bigcap L^2(0,T;V)
 \bigcap L^p(0,T; L^p(\R^n))$.

 Furthermore,
     for each $R_1>0$  and $R_2>0$,
    there  exists
       $M_2 =M_2  (R_1,R_2, T)>0$ such that  
       for any $u_0, u_{0,1}, u_{0,2}
       \in  H $ with 
          $\| u_{0} \|\le R_1$,
    $\| u_{0,1} \|\le R_1,
    \| u_{0,2}  \|\le R_1$,  and any
    $v, v_1, v_2\in L^2(0, T; l^2)$
    with
    $\| v\|_{L^2(0, T; l^2)}\le R_2$,
    $\| v_1\|_{L^2(0, T; l^2)}\le R_2$
    and $\| v_2\|_{L^2(0, T; l^2)}\le R_2$,
    the solutions
     $u_{v}$,
     $u_{v_1}$ and $u_{v_2}$
    of  \eqref{contr1}-\eqref{contr2}
    with initial data
    $u_0$,  $u_{0,1}$
    and $u_{0,2}$,   respectively,   
    satisfy  for all $t\in [0,T]$,
    \be\label{exis_sol 1}
    \| u _{v_1} (t) -u _{v_2} (t) \| ^2
    +   \int_0^T \| u_{v_1} 
    (t) -u_{v_2} (t) \|^2_{V}   dt
    \le
   M_2
    \left (
    \| u_{0,1}- u_{0,2}\|^2
    +\|  v_1- v_2 \|^2_{L^2(0, T; l^2)}
    \right ),
   \ee
   and
    \be\label{exis_sol 2}
    \| u_{v} (t) \| ^2
    +
      \int_0^T \|  u_v(t) \|^2_{V}
   dt 
   + 
   \int_0^T
   \| u_v(t)\|^p_{L^p(\R^n)} dt
    \le
   M_2 .
   \ee
 \end{lem}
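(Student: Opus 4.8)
The plan is to establish existence, uniqueness, and the two estimates by a Galerkin approximation followed by energy estimates, mirroring the approach used for the stochastic equation in Proposition \ref{posed1} but without the noise term. First I would set up a finite-dimensional Galerkin scheme in $V \cap L^p(\R^n)$: pick a basis $\{e_j\}$ (e.g.\ eigenfunctions adapted to the problem) and solve the resulting ODE system for $u_v^{(m)}(t) = \sum_{j=1}^m c_j^{(m)}(t) e_j$. The local solvability is immediate from the continuity of $F$ and $\sigma$; to pass to a global solution one needs the a priori bound \eqref{exis_sol 2}, so the energy estimate and the Galerkin existence are intertwined in the usual way. Then I would pass to the limit $m\to\infty$ using weak and weak-$*$ compactness in $L^2(0,T;V)$, $L^\infty(0,T;H)$, and $L^p(0,T;L^p(\R^n))$, together with an Aubin--Lions-type compactness argument to get strong $L^2(0,T;H)$ convergence on bounded spatial regions (enough to identify the nonlinear limit $F(t,\cdot,u_v)$, since $F$ is continuous and the term is controlled in $L^q$ by \eqref{F5}). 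Continuity in time into $H$ follows from a standard interpolation/Lions--Magenes argument once $u_v \in L^2(0,T;V)$ and $u_v' \in L^2(0,T;V^*) + L^q(0,T;L^q(\R^n))$.

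For the a priori estimate \eqref{exis_sol 2}, I would take the $H$-inner product of \eqref{contr1} with $u_v(t)$ to get the energy identity
$$
\tfrac{1}{2}\tfrac{d}{dt}\|u_v(t)\|^2 + \|(-\Delta)^{\alpha/2}u_v(t)\|^2 + \int_{\R^n} F(t,x,u_v(t))u_v(t)\,dx = (g(t),u_v(t)) + (\sigma(t,u_v(t))v(t), u_v(t)).
$$
By \eqref{F2}, the $F$ term is bounded below by $\lambda_1\|u_v(t)\|_{L^p(\R^n)}^p - \|\psi_1(t)\|_{L^1}$. For the control term I would use the Hilbert--Schmidt bound \eqref{sig6} on $\|\sigma(t,u_v(t))\|_{\call_2(l^2,H)}$ to estimate $|(\sigma(t,u_v(t))v(t),u_v(t))| \le \|\sigma(t,u_v(t))\|_{\call_2(l^2,H)}\|v(t)\|_{l^2}\|u_v(t)\| \le C(1+\|u_v(t)\|)\|v(t)\|_{l^2}\|u_v(t)\|$, then Young's inequality to absorb; the factor $\|v(t)\|_{l^2}$ is $L^1$ in $t$ with $L^1$-norm controlled by $R_2\sqrt{T}$, so Gronwall's inequality (in the form handling an $L^1$-in-time coefficient) yields the bound on $\|u_v(t)\|^2$ depending only on $R_1,R_2,T$. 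Integrating the differential inequality then gives the $L^2(0,T;V)$ and $L^p(0,T;L^p(\R^n))$ bounds, completing \eqref{exis_sol 2}.

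For the Lipschitz-type estimate \eqref{exis_sol 1}, set $w = u_{v_1} - u_{v_2}$ with initial data $u_{0,1}-u_{0,2}$, so $w$ solves
$$
\tfrac{dw}{dt} + (-\Delta)^\alpha w + F(t,\cdot,u_{v_1}) - F(t,\cdot,u_{v_2}) = \sigma(t,u_{v_1})v_1 - \sigma(t,u_{v_2})v_2.
$$
Pairing with $w$ in $H$, the key point is that the difference $\int_{\R^n}(F(t,x,u_{v_1})-F(t,x,u_{v_2}))w\,dx \ge -\int_{\R^n}\psi_3(t,x)|w|^2\,dx$ by the one-sided bound \eqref{F4}, which is exactly why the monotonicity assumption is there and why we do not need to control the polynomial growth of $F$ directly here. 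The right-hand side I would split as $\sigma(t,u_{v_1})(v_1-v_2) + (\sigma(t,u_{v_1})-\sigma(t,u_{v_2}))v_2$; the first piece is bounded using \eqref{sig6} and the already-established bound on $\|u_{v_1}\|$, contributing a term $\le C\|v_1(t)-v_2(t)\|_{l^2}\|w(t)\|$, and the second piece uses the Lipschitz bound \eqref{sig7} to get $\le C\|w(t)\|\,\|v_2(t)\|_{l^2}\|w(t)\|$. After Young's inequality, I arrive at
$$
\tfrac{d}{dt}\|w(t)\|^2 + \|w(t)\|_V^2 \le h(t)\|w(t)\|^2 + C\|v_1(t)-v_2(t)\|_{l^2}^2,
$$
where $h(t) = C(\|\psi_3(t)\|_{L^\infty} + \|v_2(t)\|_{l^2}^2 + 1) \in L^1(0,T)$ with $L^1$-norm bounded in terms of $R_2$ and $T$; Gronwall then gives \eqref{exis_sol 1}. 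Uniqueness is the special case $v_1 = v_2$, $u_{0,1}=u_{0,2}$.

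The main obstacle is the usual one for unbounded domains: identifying the weak limit of the nonlinear term $F(t,\cdot,u_v^{(m)})$ in the Galerkin passage, since Sobolev embeddings are not compact on $\R^n$ and one cannot directly deduce a.e.\ convergence from the $L^2(0,T;V)$ bound alone. I would handle this by combining local (in space) compactness via Aubin--Lions on balls $B_R$ with a diagonal argument, using the $L^q(0,T;L^q(\R^n))$ bound on $F$ from \eqref{F5} for equi-integrability of the tails; alternatively, since \eqref{F4} gives monotonicity of $u \mapsto F(t,x,u) + \psi_3(t,x)u$, a Minty-type monotonicity argument identifies the limit without needing strong convergence at all, and this is probably the cleanest route. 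Everything else — the energy estimates, the Gronwall applications, the time-continuity — is routine given the structural assumptions \eqref{F1}--\eqref{F6} and \eqref{sig0}--\eqref{sig7} already recorded in Section 3.
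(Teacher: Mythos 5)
Your proposal matches the paper's proof in all essentials: the paper likewise derives \eqref{exis_sol 2} by pairing \eqref{contr1} with $u_v$, using \eqref{F2} for the drift term, \eqref{sig6} for the control term, and Gronwall with an $L^1$-in-time coefficient, and it derives \eqref{exis_sol 1} from the one-sided bound \eqref{F4} together with a splitting of the $\sigma$-difference handled via \eqref{sig6} and \eqref{sig7}, with uniqueness as the special case $v_1=v_2$, $u_{0,1}=u_{0,2}$. The only divergence is that the paper does not prove existence at all but cites \cite{wangJDE2019} for the standard argument, whereas you sketch the Galerkin/compactness/monotonicity route; that sketch is consistent and adds detail the paper omits.
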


  \begin{proof}
  Given $T>0$, 
   $v\in L^2(0,T; l^2)$ 
   and $u_0\in H$,
   under conditions
    \eqref{F1}-\eqref{F4}
  and \eqref{sig0}-\eqref{sig3},
   one may verify that
   problem \eqref{contr1}-\eqref{contr2}
   has at least one solution  
     defined
     on $[0,T]$ by the standard argument, see, e.g.,
     \cite{wangJDE2019}. 
     Next, we show \eqref{exis_sol 1},
     which also implies the uniqueness of solutions.

     For convenience, we now write
    $u_v$ as $u$. Then by   \eqref{contr1},
    we get 
   for all $t\in (0, T)$,
    $$
   {\frac 12} {\frac d{dt}}
   \| u(t) \|^2
   =-\| (-\Delta)^{\frac {\alpha}2} u(t)\|^2
    -\int_{\R^n} F(t,x, u(t))  u(t)  dx 
   $$
   \be\label{exis_solp1}
   + (g(t), u(t))
   +(\sigma (t, u(t)) v(t), u(t)).
  \ee
  We now deal with the right-hand side of \eqref{exis_solp1}.
  By  \eqref{F2} we have
    \be\label{exis_solp2}
   - \int_{\R^n} F(t,x, u(t))  u(t)  dx 
  \le  -\lambda_1 \| u(t)\|^p_{L^p(\R^n)}
  +  \|\psi_1 (t)\|_{L^1(\R^n)}.
  \ee 
    By Young\rq{}s inequality we have
 \be\label{exis_solp4} 
   |(g(t), u(t))|
  \le {\frac 12}  \| u(t) \|^2 +
  {\frac 12}
   \| g(t) \|^2.
  \ee
  For the last term
  on the right-hand side of \eqref{exis_solp1},
  by \eqref{sig6}  we get
  for  $t\in [0,T]$,
 $$
   | (\sigma (t, u(t)) v(t), u(t))| 
  \le {\frac 12}
  \| \sigma (t, u(t))\|^2_{\call_2(l^2, H)} \| v(t)\|^2_{l^2}
  +{\frac 12} \| u(t) \|^2
  $$
  $$
  \le {\frac 12} \left ( 1
+ L_1 \| v(t) \|^2_{l^2}
\right )   \|u(t) \|^2
+ \left (
{\frac 12} L_1 + 
\sum_{k=1}^\infty \| \sigma_{1,k} (t)\|^2
\right )
  \| v(t)\|^2_{l^2} 
  $$ 
    \be\label{exis_solp5}
  \le  {\frac 12}
   \left ( 1
+ L_1 \| v(t) \|^2_{l^2}
\right )   \|u(t) \|^2
+ \left (
 {\frac 12}  L_1 + 
\|\sigma_1\|^2_{
C([0,T], L^2(\R^n, l^2))
}
\right )
  \| v(t)\|^2_{l^2} .
 \ee

    By \eqref{exis_solp1}-\eqref{exis_solp5} we get
   for all $t\in (0,T)$,
    $$
    {\frac d{dt}}
   \| u(t) \|^2  
   + 2
   \| (-\Delta)^{\frac {\alpha}2} u(t)\|^2
   +  2 \lambda_1
   \| u(t) \|^p_{L^p(\R^n)}
   $$
  \be\label{exis_solp6}
     \le   
   \left ( 2
+ L_1 \| v(t) \|^2_{l^2}
\right )   \|u(t) \|^2
+ c_1
  \| v(t)\|^2_{l^2}
  + 
   \| g(t) \|^2 + 2 \| \psi_1 (t) \|_{L^1(\R^n)},
\ee
   where $c_1=  \left (  L_1 + 2
\|\sigma_1\|^2_{
C([0,T], L^2(\R^n, l^2))
}
\right  )$.
  By \eqref{exis_solp6} we find that
  for all $t\in [0, T]$,
  $$
   \| u(t) \|^2 
   \le 
    e^{   \int_0^t  
   \left (
   2
    +L_1    \| v(r)\|^2_{l^2}
   \right )
      dr }\|u_0\|^2
     $$
     $$
     + 
    \int_0^t
    e^{
        \int_s^t
   \left (
  2 
     +L_1    \| v(r)\|^2_{l^2}
   \right )
     dr 
      } 
      \left (
    c_1  \| v(s) \|^2_{l^2}
      +\| g(s) \|^2
      + 2 \| \psi_1 (s) \|_{L^1(\R^n)}
      \right )
      ds
   $$
   $$
     \le 
     e^{2T  + L_1  \int_0^T     \| v(r)\|^2_{l^2}
      dr }\|u_0\|^2
     $$
      \be\label{exis_solp7}
     +
      e^{2T  + L_1  \int_0^T     \| v(r)\|^2_{l^2}
      dr }
      \int_0^T
        \left (
    c_1  \| v(s) \|^2_{l^2}
      +\| g(s) \|^2
      + 2 \| \psi_1 (s) \|_{L^1(\R^n)}
      \right )
      ds.
     \ee

       By   \eqref{exis_solp7} we infer that
    for every  $R_1>0$  and $R_2>0$,
    there  exists
       $c_2=c_2(R_1,R_2, T)>0$ such that 
       for any $u_0\in H$ with
       $\| u_0 \|\le R_1$ and any
        $v\in L^2(0, T; l^2)$ with
        $\| v\|_{L^2(0, T; l^2)} \le R_2$,
        the solution $u$ satisfies, for all $t\in [0,T]$,
       \be\label{exis_solp8}
   \| u(t) \|^2 
   \le   c_2,\quad \forall \ t\in[0,T].
\ee

Integrating \eqref{exis_solp6}
on $[0,T]$, by \eqref{exis_solp8} we infer
that  exists
       $c_3=c_3(R_1,R_2, T)>0$ such that
         for all  $u_0\in H$ with
       $\| u_0 \|\le R_1$ and any
        $v\in L^2(0, T; l^2)$ with
        $\| v\|_{L^2(0, T; l^2)} \le R_2$, 
       \be\label{exis_solp9}
 2  \int_0^T \|(-\Delta)^{\frac {\alpha}2} u(t) \|^2
   dt 
   +2\lambda_1
   \int_0^T
   \| u(t)\|^p_{L^p(\R^n)} dt
   \le   c_3.
\ee 
Then  \eqref{exis_sol 2} follows from 
\eqref{exis_solp8}-\eqref{exis_solp9}.

   Next, we prove  \eqref{exis_sol 1}.
    Let $v_1, v_2$ be given in 
    $ L^2(0, T; l^2)$,
    and denote by
    $$
    u_1=u_{v_1}
    \quad \text{ and } \quad
    u_2=u_{v_2}.
    $$
    Suppose 
    $\| u_{0,1} \|\le R_1$,  $\| u_{0,2} \|\le R_1$, 
    $\| v_1\|_{L^2(0, T; l^2)}\le R_2$
    and $\| v_2\|_{L^2(0, T; l^2)}\le R_2$.
    Then by \eqref{exis_solp8} we  obtain
    \be
    \label{exis_solp11}
      \| u_1 (t) \| +   \| u_2 (t) \|
   \le   c_4,\quad \forall \ t\in[0,T],
    \ee
    where $c_4=c_4(R_1, R_2, T)>0$.
      By \eqref{contr1}-\eqref{contr2}
     we get
     $$
     {\frac d{dt}}
     \| u_1(t) -u_2(t)\|^2
     +2  \| (-\Delta)^{\frac {\alpha}2}
     (u_1(t)-u_2(t))\|^2
     $$
     $$
    =-
     2\int_{\R^n}
      (F(t,x, u_1(t)) -F(t,x, u_2(t)))  (u_1(t)-u_2(t)) dx
     $$
     \be\label{exis_solp12}
     +2
     \left (
     \sigma (t, u_1(t))v_1(t)-
     \sigma (t, u_2(t))v_2(t),
     \  u_1(t)-u_2(t)
     \right ).
     \ee
      By \eqref{F4} we have
  \be\label{exis_solp13}
   -
     2\int_{\R^n}
      (F(t,x, u_1(t)) -F(t,x, u_2(t)))  (u_1(t)-u_2(t)) dx
\le
2\|\psi_3(t) \|_{L^\infty(\R^n)} \| u_1(t)-u_2(t)\|^2.
\ee
  For the last term in \eqref{exis_solp12},
    we  have
   $$
   2
     \left (
     \sigma (t, u_1(t))v_1(t)-
     \sigma (t, u_2(t))v_2(t),
     \  u_1(t)-u_2(t)
     \right )
     $$ 
        $$
   \le 2
     \| 
     \sigma (t, u_1(t))v_1(t)-
     \sigma (t, u_2(t))v_2(t)\|
     \| u_1(t)-u_2(t) \|
     $$ 
     $$
   \le 2
     \| 
     (\sigma (t, u_1(t))-
     \sigma (t, u_2(t))
     v_1(t)\| 
     \| u_1(t)-u_2(t) \|
     $$ 
   $$
   + 2
     \|  
     \sigma (t, u_2(t))
     (v_1(t) - 
     v_2(t)) \|
     \| u_1(t)-u_2(t) \|
  $$
      $$
   \le 2
     \| 
    \sigma (t, u_1(t))-
     \sigma (t, u_2(t))\|_{\call_2( {l^2},H)}
     \|v_1(t)\| _{l^2}
     \| u_1(t)-u_2(t) \|
     $$ 
     \be\label{exis_solp15}
   + 2
     \|  
     \sigma (t, u_2(t))\|_{\call_2( {l^2},H)}
     \| v_1(t) - 
     v_2(t) \|_ {l^2}
     \| u_1(t)-u_2(t) \|.
     \ee
     By \eqref{sig7}   we see that
     \be\label{exis_solp16} 
      \| 
    \sigma (t, u_1(t))-
     \sigma (t, u_2(t))\|_{\call_2(l^2,H)}
     \le c_5    \| u_1(t)-u_2(t) \|,
     \quad \forall \  t\in [0,T].
     \ee 
    On the other hand,
       by \eqref{sig6} and \eqref{exis_solp11} we
       have
     \be\label{exis_solp17} 
      \|  
     \sigma (t, u_2(t))\|^2_{\call_2(l^2,H)}
     \le L_1(1+ \| u_2(t)  \|^2)
     +2 \|\sigma_1\|^2_{C([0,T], L^2(\R^n, l^2))}
     \le c_6^2 ,
     \quad \forall \  t\in [0,T] ,
     \ee 
     where $c_6=c_6(R_1,R_2,T)>0$.
     It follows from \eqref{exis_solp15}-\eqref{exis_solp17} 
     that for all $t\in [0,T]$,
      $$
   2
     \left (
     \sigma (t, u_1(t))v_1(t)-
     \sigma (t, u_2(t))v_2(t),
     \  u_1(t)-u_2(t)
     \right )
     $$ 
     $$
     \le 2 c_5 \| v_1(t)\|_{l^2}
     \| u_1(t)-u_2(t) \|^2
     +2c_6
     \| v_1(t)-v_2(t) \|_{l^2}
     \| u_1(t)-u_2(t) \|
     $$
         \be\label{exis_solp18} 
     \le   (c_6 + 2c_5  \| v_1(t)\|_{l^2}  )
     \| u_1(t)-u_2(t) \|^2
     +c_6  
     \| v_1(t)-v_2(t) \|_{l^2} ^2.
     \ee 
   
   By \eqref{exis_solp12}-\eqref{exis_solp13} 
   and  \eqref{exis_solp18}
    we obtain   for all $t\in (0, T)$
      $$
     {\frac d{dt}}
     \| u_1(t) -u_2(t)\|^2
     +2  \| (-\Delta)^{\frac {\alpha}2}
     (u_1(t)-u_2(t))\|^2
     \le   c_6  
     \| v_1(t)-v_2(t) \|_{l^2} ^2
     $$
         \be\label{exis_solp19}
   +
   (c_6 + 2\|\psi_3(t)\|_{L^\infty(\R^n)}
   + 2c_5  \| v_1(t)\|_{l^2}  )
     \| u_1(t)-u_2(t) \|^2 .
     \ee
     By \eqref{exis_solp19} we get 
     for all $t\in [0, T]$,
     $$
      \| u_1(t) -u_2(t)\|^2
      \le e^{
      \int_0^t 
      \left (
     c_6+ 
     2 \|\psi_3 (r) \|_{L^\infty(\R^n)} 
        +2 c_5  \| v_1(r)\|_{l^2}
        \right )
       dr
      } \| u_{0,1}  -u_{0,2}   \|^2
      $$
      $$
            +
          c_6  
    \int_0^t
    e^{
      \int_s^t 
       \left (
     c_6 + 
     2 \|\psi_3 (r) \|_{L^\infty(\R^n)}
        + 2c_5  \| v_1(r)\|_{l^2}
        \right )
      dr
      }
      \|v_1(s) - v_2 (s)  \|_{l^2}^2
      ds
      $$
    \be\label{exis_solp20}
      \le c_7 \| u_{0,1}  -u_{0,2}   \|^2
      +c_7
           \|v_1  - v_2    \|^2_{L^2(0,T; l^2)},
    \ee
    where $c_7=c_7(R_1,R_2, T)>0$.
    Integrating \eqref{exis_solp19}
    on $(0,T)$, by \eqref{exis_solp20} we get
       $$
    2 \int_0^T  \| (-\Delta)^{\frac {\alpha}2}
     (u_1(t)-u_2(t))\|^2  dt
      \le c_8 \| u_{0,1}  -u_{0,2}   \|^2
      +c_8
           \|v_1  - v_2    \|^2_{L^2(0,T; l^2)},
$$
      where $c_8=c_8(R_1,R_2, T)>0$,
      which along with \eqref{exis_solp20}
      yields 
    \eqref{exis_sol 1}.
 \end{proof}

 It follows from 
    Lemma \ref{exis_sol} that
 the solution
 $u_v$ of \eqref{contr1}-\eqref{contr2}
 is continuous in $C([0,T],H)
 \bigcap L^2(0,T; V)$ with respect to
  $v$ in the strong topology of  $L^2(0,T; l^2)$.
  However, for establishing the LDP of the stochastic
  equation, we need the 
    continuity of $u_v$  in
     $C([0,T],H)
 \bigcap L^2(0,T; V)$ with respect to
  $v$ in the weak topology of  $L^2(0,T; l^2)$
  rather than the strong topology.
  The following uniform tail-ends estimates
  of solutions are crucial for proving the 
     continuity of $u_v$  in $v$
     from
     the weak topology of  $L^2(0,T; l^2)$
    to the strong topology of  $C([0,T],H)
 \bigcap L^2(0,T; V)$.

 \begin{lem}\label{tail}
 Suppose   \eqref{F1}-\eqref{F4}
  and \eqref{sig0}-\eqref{sig3}  hold, 
  $T>0$ and $u_0\in H$.
  Then for every  
  $R>0$ and $\eps>0$,
  there exists $K=K(T, u_0, R, \eps)>0$ such that
  for all $v\in L^2(0,T; l^2)$
  with $\| v\|_{L^2(0,T; l^2)} \le R$,
  the solution
  $u_v$ of \eqref{contr1}-\eqref{contr2}
  satisfies, for all $m\ge K$  and  $t\in [0,T]$,
   $$
   \int_{|x|\ge m}
   \left |u_v(t, x) \right |^2 dx
   <\eps.
   $$
   \end{lem}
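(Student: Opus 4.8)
The plan is to establish the claim via a uniform far-field (tail) energy estimate for $u_v$, following the cut-off method of \cite{wangPD1999}. I would fix $\rho\in C^\infty(\R^n;[0,1])$, non-decreasing in $|x|$, with $\rho(x)=0$ for $|x|\le 1$ and $\rho(x)=1$ for $|x|\ge 2$, and for $m\ge 1$ set $\rho_m(x)=\rho(x/m)$, so that $\rho_m\equiv 0$ on $\{|x|\le m\}$, $\rho_m\equiv 1$ on $\{|x|\ge 2m\}$, and $\|\nabla\rho_m\|_{L^\infty(\R^n)}\le c_0/m$ with $c_0$ depending only on $\rho$. Since $u_v(t)\in V\cap L^p(\R^n)$ for a.e.\ $t$ and multiplication by the smooth bounded function $\rho_m^2$ preserves $V\cap L^p(\R^n)$, one may test \eqref{contr1} with $\xi=\rho_m^2 u_v(t)$ (justified as in the derivation of the energy equation of Proposition \ref{posed1}); writing $y_m(t):=\int_{\R^n}\rho_m^2(x)|u_v(t,x)|^2\,dx$ and using that $\rho_m$ does not depend on $t$, this will give, for a.e.\ $t\in(0,T)$,
\be\label{tailp1}
\frac12\frac{d}{dt}y_m(t)+\left((-\Delta)^{\frac\alpha2}u_v(t),(-\Delta)^{\frac\alpha2}(\rho_m^2 u_v(t))\right)+\int_{\R^n}F(t,x,u_v(t))\,\rho_m^2 u_v(t)\,dx=(g(t),\rho_m^2 u_v(t))+(\sigma(t,u_v(t))v(t),\rho_m^2 u_v(t)).
\ee

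The three lower-order terms of \eqref{tailp1} I would handle by the usual estimates. Since $\rho_m^2$ vanishes on $\{|x|<m\}$ and $\rho_m^2\le 1$ everywhere, \eqref{F2} gives $\int_{\R^n}F(t,x,u_v)\rho_m^2 u_v\,dx\ge-\int_{|x|\ge m}|\psi_1(t,x)|\,dx$; Young's inequality gives $|(g(t),\rho_m^2 u_v(t))|\le\frac12 y_m(t)+\frac12\int_{|x|\ge m}|g(t,x)|^2\,dx$; and, using \eqref{sig0}, the Cauchy--Schwarz inequality in $l^2$, the bounds \eqref{sig2}--\eqref{sig3}, $\kappa\in L^2(\R^n)\cap L^\infty(\R^n)$ and Young's inequality, one gets a constant $c_1$, independent of $m,v,t$, such that
$$\left|(\sigma(t,u_v)v(t),\rho_m^2 u_v)\right|\le c_1\|v(t)\|_{l^2}\,y_m(t)+c_1\|v(t)\|_{l^2}\left(\int_{|x|\ge m}\|\sigma_1(t,x)\|_{l^2}^2\,dx+\int_{|x|\ge m}\kappa^2(x)\,dx\right).$$

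The main obstacle will be the nonlocal term $\left((-\Delta)^{\frac\alpha2}u_v,(-\Delta)^{\frac\alpha2}(\rho_m^2 u_v)\right)$: unlike the local case $\alpha=1$, where an integration by parts leaves the harmless $\int\rho_m^2|\nabla u_v|^2$ plus a remainder bounded by $c\,m^{-1}\|u_v\|_V^2$, the nonlocality of $(-\Delta)^\alpha$ prevents localization to $\{|x|\ge m\}$, and one must control the interaction between the near and far fields. The plan is to express this term through the Gagliardo seminorm of $(-\Delta)^\alpha$ and to use, with the shorthand $a=u_v(x)$, $b=u_v(y)$, $p=\rho_m(x)$, $q=\rho_m(y)$, the pointwise identity
$$(a-b)(p^2a-q^2b)=\frac{p^2+q^2}{2}(a-b)^2+\frac{p^2-q^2}{2}(a^2-b^2);$$
the first summand integrates to a nonnegative quantity, while the second is controlled by $|p^2-q^2|\le 2|p-q|\le 2\min(1,\frac{c_0}{m}|x-y|)$, $|a^2-b^2|\le|a-b|(|a|+|b|)$, the Cauchy--Schwarz inequality, and the elementary bound $\int_{\R^n}|z|^{-n-2\alpha}\min(1,\frac{c_0}{m}|z|)^2\,dz\le C(n,\alpha)\,m^{-2\alpha}$, leading to an estimate of the form
\be\label{tailp3}
\left((-\Delta)^{\frac\alpha2}u_v(t),(-\Delta)^{\frac\alpha2}(\rho_m^2 u_v(t))\right)\ge-\frac{c_2}{m^\alpha}\|u_v(t)\|_V^2,\qquad c_2=c_2(n,\alpha)>0.
\ee
This is where the fractional case is genuinely harder than the local one, and I expect \eqref{tailp3} to be the technical heart of the proof. (Alternatively one could invoke a corresponding cut-off estimate for the fractional Laplacian available in the fractional-PDE literature.)

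Inserting the above bounds and \eqref{tailp3} into \eqref{tailp1} yields, for a.e.\ $t\in(0,T)$, a differential inequality $\frac{d}{dt}y_m(t)\le\mu(t)y_m(t)+h_m(t)$ with $\mu(t)=c_3\bigl(1+\|v(t)\|_{l^2}^2\bigr)$ and
$$h_m(t)=2\int_{|x|\ge m}|\psi_1(t,x)|\,dx+\int_{|x|\ge m}|g(t,x)|^2\,dx+c_1\|v(t)\|_{l^2}\left(\int_{|x|\ge m}\|\sigma_1(t,x)\|_{l^2}^2\,dx+\int_{|x|\ge m}\kappa^2(x)\,dx\right)+\frac{2c_2}{m^\alpha}\|u_v(t)\|_V^2.$$
Then $\int_0^T\mu(t)\,dt\le c_3(T+R^2)=:\Lambda(R,T)$ uniformly over $\|v\|_{L^2(0,T;l^2)}\le R$; and, since $\psi_1\in L^1(0,T;L^1(\R^n))$, $g\in L^2(0,T;L^2(\R^n))$, $\kappa\in L^2(\R^n)$, $\sigma_1\in C([0,T],L^2(\R^n,l^2))$, $\int_0^T\|v(t)\|_{l^2}^2\,dt\le R^2$, and $\int_0^T\|u_v(t)\|_V^2\,dt\le M_2$ by Lemma \ref{exis_sol}, one checks, using the Cauchy--Schwarz inequality in $t$ for the $\sigma_1$-tail term (to keep the bound uniform in $v$) together with the dominated convergence theorem, that $\int_0^T h_m(t)\,dt\to 0$ as $m\to\infty$, uniformly over $\|v\|_{L^2(0,T;l^2)}\le R$. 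Gronwall's inequality then gives $y_m(t)\le e^{\Lambda(R,T)}\bigl(y_m(0)+\int_0^T h_m(s)\,ds\bigr)$ for all $t\in[0,T]$; and since $y_m(0)\le\int_{|x|\ge m}|u_0(x)|^2\,dx\to 0$ as $m\to\infty$ (because $u_0\in H$), the right-hand side tends to $0$ as $m\to\infty$, uniformly in such $v$. Hence, given $\eps>0$, there is $K=K(T,u_0,R,\eps)>0$ such that this bound is $<\eps$ for all $m\ge K$; and since $\int_{|x|\ge 2m}|u_v(t,x)|^2\,dx\le y_m(t)$ for all $t\in[0,T]$, the claim follows upon replacing $K$ by $2K$.
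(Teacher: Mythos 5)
Your proposal is correct and follows essentially the same route as the paper: the same cut-off test function $\rho_m^2 u_v$, the same treatment of the drift, forcing and diffusion terms, and the same key bound $\bigl((-\Delta)^{\frac\alpha2}u_v,(-\Delta)^{\frac\alpha2}(\rho_m^2u_v)\bigr)\ge -c\,m^{-\alpha}\|u_v\|_V^2$ obtained by splitting off the nonnegative part of the Gagliardo double integral and estimating the commutator part via $|\rho_m(x)-\rho_m(y)|\le\min(1,c_0|x-y|/m)$, followed by Gronwall. The only (immaterial) deviations are your symmetrized algebraic identity for the nonlocal term in place of the paper's one-sided decomposition, and your use of dominated convergence in $t$ plus Cauchy--Schwarz for the $\sigma_1$-tail term where the paper invokes compactness of $\{\sigma_1(s):s\in[0,T]\}$ in $L^2(\R^n,l^2)$.
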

     
 \begin{proof}
 Let $\theta: \R^n \to [0,1]$  be a smooth function such that 
 \be\label{cutoff}
 \theta (x)=0 \ \ \text{for }  |x| \le {\frac 12};    \  \text{and} \  
 \theta (x) = 1 \  \  
 \text{for } \  |x| \ge 1.
\ee
Given $m\in \N$,   let 
 $\theta_m (x)  =  \theta  \left (
 {\frac xm}
 \right ) $.
  By \eqref{contr1}  we get
 $$
 {\frac {d}{dt}}
 \theta_m u_v(t)
  + \theta_m (-\Delta)^ \alpha  u_v  (t) 
   + \theta_m F(t, x, u_v) 
  = \theta_m g(t)
  +\theta_m
  \sigma (t, u_v(t)) v(t)  ,    
$$
and hence
 \be\label{tail 1}
 {\frac {d}{dt}}
 \|\theta_m u_v (t) \|^2 
 + 2 
   (   (-\Delta)^ {\frac {\alpha}2}   u_v(t),  
    (-\Delta)^ {\frac {\alpha}2} (\theta_m^2 u_v (t) ))
  + 2  \int_{\R^n}
   F(t, x, u_v (t))   \theta_m^2 (x)   u_v (t) dx  
 $$
 $$
 = 2   (\theta_m g (t), \theta_m  u_v (t) ) 
  + 2(  \theta_m
  \sigma (t, u_v(t)) v(t),
  \theta_m  u_v(t))  .
 \ee
 For the second term on the left-hand side of
 \eqref{tail 1} we have
  $$
 -2 
    (   (-\Delta)^ {\frac {\alpha}2}   u_v(t),  
     (-\Delta)^ {\frac {\alpha}2} (\theta_m^2 u_v(t)) )
   $$
   $$
   =-
   C(n,\alpha) 
   \int_{\R^n}\int_{\R^n} 
   {\frac
   { (u_v(t,x) -u_v(t,y)) (\theta_m^2(x) u_v(t,x)  
   -  \theta_m^2(y)  u_v(t,y)  ) }
   {|x-y|^{n+2\alpha}}
   }dxdy 
  $$ 
   $$
   \le  -
   C(n,\alpha) 
   \int_{\R^n}\int_{\R^n} 
   {\frac
   { (u_v(t,x) -u_v(t,y))
    (\theta_m^2(x)   -  \theta_m^2(y)  )  u_v(t,y)  }
   {|x-y|^{n+2\alpha}}
   }dxdy 
  $$
  $$
   \le
     C(n,\alpha)
    \int_{\R^n}
  | u_v(t,y)|  \left (
   \int_{\R^n} 
   {\frac
   { \left |
   (u_v(t,x) -u_v(t,y))
    (\theta_m(x)   -  \theta_m(y)  )(\theta_m(x)   +
     \theta_m(y)  ) \right |   }
   {|x-y|^{n+2\alpha}}
   }dx 
   \right )
   dy  
  $$
    $$
   \le
  2   C(n,\alpha)
  \| u_v (t) \|
  \left (  \int_{\R^n} \left (
   \int_{\R^n} 
   {\frac
   { |  (u_v (t,x) -u_v (t,y)) (\theta_m(x)   -  \theta_m(y)  )  | }
   {|x-y|^{n+2\alpha}}
   }dx
   \right )^2
   dy 
   \right )^{\frac 12} 
  $$
 $$
   \le
  2   C(n,\alpha)
   \| u_v (t) \|
   \left (
  \int_{\R^n}  \left (
   \int_{\R^n} 
   {\frac
   { |  u_v (t,x) -u_v (t,y)|^2  }
   {|x-y|^{n+2\alpha}}
   }dx
    \int_{\R^n} 
   {\frac
   { |  \theta_m(x) -\theta_m(y) |^2  }
   {|x-y|^{n+2\alpha}}
   }dx  \right )
   dy 
   \right )^{\frac 12}. 
$$
     $$
   \le
  2 c_1 C(n,\alpha)   m^{-\alpha}
     \| u_v(t) \|
   \left (  \int_{\R^n}  
   \int_{\R^n}
   {\frac
   { |  u_v(t,x) -u_v(t,y)|^2  }
   {|x-y|^{n+2\alpha}}
   }dx 
   dy \right )^{\frac 12} 
  $$
  $$
   \le
   c_1  C(n,\alpha) m^{-\alpha}
    \left ( 
    \| u_v(t) \|^2  
     + 
    \int_{\R^n}
 \int_{\R^n}
   {\frac
   { |  u_v(t,x) -u_v(t,y)|^2  }
   {|x-y|^{n+2\alpha}}
   }dx 
   dy   \right )
  $$
   \be\label{tail 2}
  =c_1  C(n,\alpha) m^{-\alpha}
   \| u_v (t) \|^2 
     +
     2  c_1   m^{-\alpha}    
     \| (-\Delta)^{\frac \alpha{2}} u_v (t) \|^2,
  \ee
  where $c_1$ is a positive constant
  independent of $m$.

  For the third term on the left-hand side of
  \eqref{tail 1}, by \eqref{F2} we get
  \be\label{tail 3}
  - 2 \int_{\R^n} 
  F(t, x, u_v (t))   \theta_m^2 (x)   u_v(t) dx  
  \le 2 \int_{\R^n}  \theta_m^2 (x) \psi_1(t,x) dx
  \le 2 \int_{|x| \ge {\frac 12 m}}  |\psi_1(t,x)| dx.
 \ee
By Young\rq{}s inequality, we have
  \be\label{tail 4}
 2  (\theta_m g(t), \theta_m u_v(t) )
 \le 
 \| \theta_m u_v (t)\|^2 
 +  \|  \theta _m   g(t) \|^2
 \le
  \| \theta_m u_v (t)\|^2 
 +   
\int_{|x| \ge {\frac 12} m} g^2(t,x) dx.
\ee
For the last term in \eqref{tail 1},
by \eqref{sig2}, \eqref{sig3} and \eqref{sig5}
we have
$$
2(  \theta_m
  \sigma (t, u_v(t)) v(t),
  \theta_m  u_v(t)) 
  \le
  2 \|  \theta_m
  \sigma (t, u_v(t)) v(t) \| \|
  \theta_m  u_v(t)\|
  $$
  $$
   \le
    \|  \theta_m
  \sigma (t, u_v(t)) v(t) \|^2
  +  \|
  \theta_m  u_v(t)\|^2
  \le
    \|  \theta_m
  \sigma (t, u_v(t))\|_{\call_2(l^2, H)}^2
  \| v(t) \|^2_{l^2}
  +  \|
  \theta_m  u_v(t)\|^2
  $$
   $$
  =   \| v(t) \|^2_{l^2} \sum_{k=1}^\infty
\| 
  \theta_m  
   \sigma_{1,k} (t)
  +  \theta_m   \kappa   \sigma_{2,k}
  (t,\cdot, u_v(t) )
\|^2 
  +  \|
  \theta_m  u_v(t)\|^2
  $$
   $$
  \le 2   \| v(t) \|^2_{l^2} \sum_{k=1}^\infty
( \| 
  \theta_m  
   \sigma_{1,k} (t)\|^2
  + \| \theta_m   \kappa   \sigma_{2,k}
  (t,\cdot, u_v(t) )
\|^2  )
  +  \|
  \theta_m  u_v(t)\|^2
  $$
   $$
  \le 2   \| v(t) \|^2_{l^2} \sum_{k=1}^\infty
  \| 
  \theta_m  
   \sigma_{1,k} (t)\|^2
    +  \|
  \theta_m  u_v(t)\|^2
  $$
  $$
  +
    2   \| v(t) \|^2_{l^2} \sum_{k=1}^\infty
  \int_{\R^n}
   \theta_m^2(x)    \kappa^2(x)  
   (\beta_k +\gamma_k |u_v(t)| )^2 dx
   $$
   $$
  \le 2   \| v(t) \|^2_{l^2} \sum_{k=1}^\infty
  \| 
  \theta_m  
   \sigma_{1,k} (t)\|^2
    +  \|
  \theta_m  u_v(t)\|^2
  $$
  $$
  +
   4   \| v(t) \|^2_{l^2} \sum_{k=1}^\infty
  \int_{\R^n}
   \theta_m^2(x)    \kappa^2(x)  
   (\beta_k^2 +\gamma_k^2 |u_v(t)|  ^2 )  dx
   $$
   $$
  \le 2   \| v(t) \|^2_{l^2} \sum_{k=1}^\infty
 \int_{|x| \ge {\frac 12} m}
  | \sigma_{1,k} (t,x ) |^2 dx
    +  \|
  \theta_m  u_v(t)\|^2
  $$
\be\label{tail 5}
  +
   4   \| v(t) \|^2_{l^2} \sum_{k=1}^\infty
   \beta_k^2 
  \int_{|x|\ge {\frac 12} m}     \kappa^2(x) dx
  +  4   \| v(t) \|^2_{l^2}\|\kappa\|_{L^\infty
  (\R^n)}^2
   \sum_{k=1}^\infty\gamma_k^2
 \| \theta_m u_v(t) \|^2.
 \ee

  It follows from
  \eqref{tail 1}-\eqref{tail 5} that
  for all $t\in (0,T)$,
  $$
 {\frac {d}{dt}}
 \|\theta_m u_v (t) \|^2 
 \le
 c_2 m^{-\alpha}
 \| u_v (t)\|^2_V
 +c_3
 (1+\| v(t)\|^2_{l^2})\|\theta_m u_v(t)\|^2
 + 2 \int_{|x| \ge {\frac 12} m}
    | \psi_1 (t,x)|
   dx
 $$
\be\label{tail 6}
 +  \int_{|x| \ge {\frac 12} m}
    | g (t,x)|^2
   dx
 +
 2   \| v(t) \|^2_{l^2} 
 \int_{|x| \ge {\frac 12} m}\sum_{k=1}^\infty
  | \sigma_{1,k} (t,x ) |^2 dx
    +  
    4   \| v(t) \|^2_{l^2} \sum_{k=1}^\infty
   \beta_k^2 
  \int_{|x|\ge {\frac 12} m}     \kappa^2(x) dx,
\ee
  where $c_2$ and $c_3$
  are positive numbers independent of $m$.
  
  Solving \eqref{tail 6} we obtain,
  for all $t\in [0,T]$,
  $$
   \|\theta_m u_v (t) \|^2 
   \le
   e^{c_3
   \int_0^t  (1+\| v(r)\|^2_{l^2})dr
   }\| \theta_m u_0\|^2
   +
 c_2 m^{-\alpha}  \int_0^t
  e^{c_3
   \int_s^t  (1+\| v(r)\|^2_{l^2})dr
   }
 \| u_v (s)\|^2_V ds
   $$
  $$
  +  
  \int_0^t
  e^{c_3
   \int_s^t  (1+\| v(r)\|^2_{l^2})dr
   }
   \left (
   \int_{|x| \ge {\frac 12} m}
   (2 | \psi_1 (s,x)| + |g(s,x)|^2)
   dx\right )  ds
 $$
 $$
 +
 2
  \int_0^t
  e^{c_3
   \int_s^t  (1+\| v(r)\|^2_{l^2})dr
   }
    \| v(s) \|^2_{l^2} \left (
 \int_{|x| \ge {\frac 12} m}\sum_{k=1}^\infty
  | \sigma_{1,k} (s,x ) |^2 dx
  \right )  ds
  $$
  $$
    +  
    4 \sum_{k=1}^\infty
   \beta_k^2 
   \left (
  \int_{|x|\ge {\frac 12} m}     \kappa^2(x) dx
  \right ) \int_0^t
  e^{c_3
   \int_s^t  (1+\| v(r)\|^2_{l^2})dr
   }
     \| v(s) \|^2_{l^2}  
  ds
  $$
   $$  
   \le
   e^{c_3   (T+  \| v \|^2_{L^2(0,T; l^2)}
     ) 
   }\| \theta_m u_0\|^2
   +
 c_2  e^{c_3   (T+  \| v \|^2_{L^2(0,T; l^2)}
     ) 
   }  m^{-\alpha}  \int_0^T 
 \| u_v (s)\|^2_V ds
   $$
  $$
  +    e^{c_3   (T+  \| v \|^2_{L^2(0,T; l^2)}
     ) 
   } 
   \int_{|x| \ge {\frac 12} m}
  \left (  \int_0^T
   (2 | \psi_1 (s,x)| + |g(s,x)|^2)
  ds \right )  dx
 $$
 $$
 +
 2 e^{c_3   (T+  \| v \|^2_{L^2(0,T; l^2)}
     ) 
   } 
  \int_0^T
    \| v(s) \|^2_{l^2} \left (
 \int_{|x| \ge {\frac 12} m}\sum_{k=1}^\infty
  | \sigma_{1,k} (s,x ) |^2 dx
  \right )  ds
  $$
  $$
    +  
    4 
     e^{c_3   (T+  \| v \|^2_{L^2(0,T; l^2)}
     ) 
   } \sum_{k=1}^\infty
   \beta_k^2 
   \left (
  \int_{|x|\ge {\frac 12} m}     \kappa^2(x) dx
  \right )
     \int_0^T
    \| v(s) \|^2_{l^2}  
  ds
  $$
   $$  
   \le
  c_4 \| \theta_m u_0\|^2
   +
 c_2 c_4   m^{-\alpha}  \int_0^T 
 \| u_v (s)\|^2_V ds
   $$
  $$
  +   c_4 
   \int_{|x| \ge {\frac 12} m}
  \left (  \int_0^T
   (2 | \psi_1 (s,x)| + |g(s,x)|^2)
  ds \right )  dx
 $$
 \be\label{tail 7}
 +
 2c_4
  \int_0^T
    \| v(s) \|^2_{l^2} \left (
 \int_{|x| \ge {\frac 12} m} 
  \| \sigma_{1 } (s,x ) \|^2_{l^2}  dx
  \right )  ds
    +  
    4 
    c_4 R^2
     \sum_{k=1}^\infty
   \beta_k^2 
   \left (
  \int_{|x|\ge {\frac 12} m}     \kappa^2(x) dx
  \right ),
\ee
 where
 $c_4=  e^{c_3   (T+  R^2) }
    $.
  
  Next, we deal with every term on the right-hand side of
  \eqref{tail 7}.
  Since $u_0\in H$,
  for  the first term
  on the right-hand side of \eqref{tail 7}  we have
   \be\label{7a}
   c_4 \| \theta_m u_0\|^2
   \le c_4
   \int_{|x|\ge {\frac 12} m}
   |u_0(x)|^2 dx
   \to 0, \ \text{ as } m \to \infty.
 \ee
   For  the 
   second  term
  on the right-hand side of \eqref{tail 7},
  by Lemma \ref{exis_sol} we have
 \be\label{tail 7b}
  c_2 c_4   m^{-\alpha}  \int_0^T 
 \| u_v (s)\|^2_V ds
 \le  c_2 c_4M_2   m^{-\alpha} \to 0 
 , \ \text{ as } m \to \infty.
 \ee
   Since $\psi_1\in L^1(0,T; L^1(\R^n))$
   and $g\in L^2(0,T; L^2(\R^n))$, 
   for the  
  third  term
  on the right-hand side of \eqref{tail 7},
 we have 
 \be\label{tail 7c}
 c_4 
   \int_{|x| \ge {\frac 12} m}
  \left (  \int_0^T
   (2 | \psi_1 (s,x)| + |g(s,x)|^2)
  ds \right )  dx
  \to 0 
 , \ \text{ as } m \to \infty.
  \ee
  
  By assumption, we know that
  $\sigma_1: [0,T]
  \to L^2(\R^n, l^2)$ is continuous, and hence
  the set
  $\{\sigma_1(s): s\in [0,T]\}$
  is a compact subset of 
  $L^2(\R^n, l^2)$, which implies that
  for every $\delta>0$,
  there exists $k\in \N$ and
  $s_1,\ldots, s_k\in [0,T]$ such that
  the open balls centered at
  $ \sigma_1 (s_i)$,
  $i=1,\ldots, k $, with radius 
  ${\frac 12}\delta$
  form a finite open cover of
  the set  $\{\sigma_1(s): s\in [0,T]\}$
  in
  $L^2(\R^n, l^2)$.
  Since  
   $ \sigma_1 (s_i)
   \in L^2(\R^n, l^2) $
   for each $i=1,\ldots,k$, we infer that
   there exists $K_1>0$ such that
   $$
   \left (
   \int_{|x|\ge K_1} \| \sigma_1 (s_i,x) \|^2_{l^2} dx
   \right )^{\frac 12}
   <{\frac 12} \delta,
   \quad \forall \ i=1,\ldots, k.
   $$
   Therefore,  we obtain
    $$
   \left (
   \int_{|x|\ge K_1} \| \sigma_1 (s ,x) \|^2_{l^2} dx
   \right )^{\frac 12}
   <  \delta,
   \quad \forall \ s\in [0,T],
   $$
 which implies that
 for all $m\ge 2 K_1$,
 $$
 2c_4
  \int_0^T
    \| v(s) \|^2_{l^2} \left (
 \int_{|x| \ge {\frac 12} m} 
  \| \sigma_{1 } (s,x ) \|^2_{l^2}  dx
  \right )  ds
  \le
   2c_4 \delta^2 
  \int_0^T
    \| v(s) \|^2_{l^2}  
    ds
    \le 2c_4 R^2 \delta^2 .
  $$
  Since $\delta>0$ is arbitrary, we infer from the
  above inequality that
  \be\label{tail 7d}
 2c_4
  \int_0^T
    \| v(s) \|^2_{l^2} \left (
 \int_{|x| \ge {\frac 12} m} 
  \| \sigma_{1 } (s,x ) \|^2_{l^2}  dx
  \right )  ds
  \to 0,
  \quad \text{as } \ m\to \infty.
  \ee
  Since $\kappa \in H$,
  for the  
 last  term
  on the right-hand side of \eqref{tail 7},
 we have 
\be\label{tail 7e} 
   4 
    c_4 R^2
     \sum_{k=1}^\infty
   \beta_k^2 
   \left (
  \int_{|x|\ge {\frac 12} m}     \kappa^2(x) dx
  \right )    \to 0,
  \quad \text{as } \ m\to \infty.
  \ee
  
  It follows from \eqref{tail 7}-\eqref{tail 7e}
  that 
  for every $\eps>0$,
  there exists $K_2=K_2(T, u_0, R, \eps)>0$
  such that  for all  $m\ge K_2$ and $t\in [0,T]$,
  $$
  \int_{|x|\ge m} |u_v (t,x) |^2 dx
  \le 
   \|\theta_m u_v (t) \|^2 <\eps,
   $$
  which concludes the proof.
 \end{proof}
 
 The following lemma is concerned with
  the 
    continuity of
    the solution 
    $u_v$ of problem  \eqref{contr1}-\eqref{contr2}
     in
     $C([0,T],H)
 \bigcap L^2(0,T; V)$ with respect to
  $v$ in the weak topology of  $L^2(0,T; l^2)$.

 \begin{lem}\label{wc_sol}
 Suppose   \eqref{F1}-\eqref{F4}
  and \eqref{sig0}-\eqref{sig3}  hold.
 Let
  $v, v_n \in L^2(0,T; l^2)$ 
  for all  $n\in \N$
  and  $u_v$,  $u_{v_n}$
  be the solutions of 
  \eqref{contr1}-\eqref{contr2}
  corresponding to $v$  and $v_n$,
  respectively.
    If   $v_n \to v$ weakly in 
  $L^2(0,T; l^2)$,
  then
  $ u_{v_n} \to u_v $ strongly
  in  
 $  C([0,T], H)\bigcap L^2(0,T;V)
 $. 
   \end{lem}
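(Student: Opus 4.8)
The plan is to argue by compactness together with a Gronwall estimate on the energy of the difference. First I would record the uniform a priori bounds. Since $v_n\to v$ weakly in $L^2(0,T;l^2)$, the sequence is bounded, say $\|v_n\|_{L^2(0,T;l^2)}\le R$ for all $n$. By Lemma~\ref{exis_sol} (estimate \eqref{exis_sol 2}) the solutions $u_{v_n}$ are bounded in $C([0,T],H)\cap L^2(0,T;V)\cap L^p(0,T;L^p(\R^n))$ uniformly in $n$, and $\sigma(\cdot,u_{v_n})v_n$ is bounded in $L^2(0,T;H)$ by \eqref{sig6}; hence, using \eqref{contr1} and \eqref{F5}, $\frac{d}{dt}u_{v_n}$ is bounded in $L^q(0,T;(V\cap L^p(\R^n))^*)$, where $\frac1p+\frac1q=1$. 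Passing to a subsequence (not relabeled), $u_{v_n}\rightharpoonup\xi$ weakly in $L^2(0,T;V)$, weak-$\ast$ in $L^\infty(0,T;H)$, weakly in $L^p(0,T;L^p(\R^n))$, and $\frac{d}{dt}u_{v_n}\rightharpoonup\frac{d}{dt}\xi$.

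Next I would upgrade this to strong convergence in $L^2(0,T;H)$ and identify the limit. For each fixed $m\in\N$ the embedding $H^\alpha(B_m)\hookrightarrow L^2(B_m)$ is compact on the ball $B_m=\{|x|<m\}$, so a standard Aubin--Lions argument gives $u_{v_n}\to\xi$ strongly in $L^2(0,T;L^2(B_m))$; a diagonal extraction then yields $u_{v_n}\to\xi$ a.e.\ on $(0,T)\times\R^n$. Combining the local strong convergence with the uniform tail estimate of Lemma~\ref{tail} (applied with the common bound $R$), which makes $\sup_{t\in[0,T]}\int_{|x|\ge m}|u_{v_n}(t,x)|^2dx$ small uniformly in $n$ for large $m$, I conclude $u_{v_n}\to\xi$ strongly in $L^2(0,T;H)$. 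The a.e.\ convergence together with \eqref{F5} and the uniform bound in $L^p(0,T;L^p(\R^n))$ identifies the weak limit of $F(\cdot,\cdot,u_{v_n})$ in $L^q(0,T;L^q(\R^n))$ as $F(\cdot,\cdot,\xi)$; and since $\|\sigma(t,u_{v_n})-\sigma(t,\xi)\|_{\call_2(l^2,H)}\to0$ in $L^2(0,T)$ by \eqref{sig7} while $v_n\rightharpoonup v$, one checks (writing $(\sigma(t,u_{v_n})v_n,\phi)=\langle v_n,\sigma(t,u_{v_n})^*\phi\rangle_{l^2}$ and splitting off $v_n-v$) that $\sigma(\cdot,u_{v_n})v_n\rightharpoonup\sigma(\cdot,\xi)v$ weakly in $L^2(0,T;H)$. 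Passing to the limit in the weak formulation of \eqref{contr1}--\eqref{contr2} (the initial condition $\xi(0)=u_0$ being recovered by the standard argument using the uniform bound on $\frac{d}{dt}u_{v_n}$) shows that $\xi$ is a solution with control $v$, hence $\xi=u_v$ by the uniqueness part of Lemma~\ref{exis_sol}. Since the limit does not depend on the subsequence, the whole sequence satisfies $u_{v_n}\to u_v$ strongly in $L^2(0,T;H)$.

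Finally I would deduce convergence in $C([0,T],H)\cap L^2(0,T;V)$. Put $w_n=u_{v_n}-u_v$, so $w_n(0)=0$. Subtracting the equations as in \eqref{exis_solp12}, estimating the drift difference by \eqref{F4} and splitting the noise difference as $\sigma(t,u_{v_n})v_n-\sigma(t,u_v)v=(\sigma(t,u_{v_n})-\sigma(t,u_v))v_n+\sigma(t,u_v)(v_n-v)$ with \eqref{sig7} and \eqref{sig6}, one obtains
\[
\frac{d}{dt}\|w_n(t)\|^2+2\|(-\Delta)^{\frac{\alpha}{2}}w_n(t)\|^2\le\phi_n(t)\,\|w_n(t)\|^2+h_n(t),
\]
where $\phi_n(t)=2\|\psi_3(t)\|_{L^\infty(\R^n)}+c\,\|v_n(t)\|_{l^2}$ satisfies $\int_0^T\phi_n\le C_R$ uniformly in $n$, and $h_n(t)=2\big(\sigma(t,u_v(t))(v_n(t)-v(t)),w_n(t)\big)$. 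The key point is that
\[
\int_0^T|h_n(t)|\,dt\le2\|v_n-v\|_{L^2(0,T;l^2)}\Big(\int_0^T\|\sigma(t,u_v(t))\|^2_{\call_2(l^2,H)}\,\|w_n(t)\|^2\,dt\Big)^{1/2}\le C_R\,\|w_n\|_{L^2(0,T;H)}\longrightarrow0,
\]
because $\|v_n-v\|_{L^2(0,T;l^2)}$ is bounded, $\sigma(t,u_v(t))$ is bounded in $\call_2(l^2,H)$ by \eqref{sig6}, and $w_n\to0$ in $L^2(0,T;H)$ by the previous step. Gronwall's inequality (with $w_n(0)=0$) then yields $\sup_{t\in[0,T]}\|w_n(t)\|^2\le e^{C_R}\int_0^T|h_n|\to0$, i.e.\ $u_{v_n}\to u_v$ in $C([0,T],H)$; integrating the differential inequality over $[0,T]$ bounds $2\int_0^T\|(-\Delta)^{\frac{\alpha}{2}}w_n\|^2$ by $C_R\sup_{t}\|w_n(t)\|^2+\int_0^T|h_n|\to0$, and since $\|w_n\|_{L^2(0,T;H)}\to0$ as well, this gives $u_{v_n}\to u_v$ in $L^2(0,T;V)$.

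I expect the main obstacle to be the second step: proving strong $L^2(0,T;H)$ convergence on the unbounded domain $\R^n$, and thereby handling the superlinear term $F$ through a.e.\ convergence. This is precisely where the non-compactness of $H^\alpha(\R^n)\hookrightarrow L^2(\R^n)$ must be circumvented, by marrying the local Aubin--Lions compactness on balls to the uniform-in-$n$ tail estimate of Lemma~\ref{tail}. A secondary but essential subtlety is that in the Gronwall step the cross term $h_n$ cannot be controlled by $\|v_n-v\|_{L^2(0,T;l^2)}$ alone --- which does not tend to $0$ under weak convergence --- but only by exploiting the already-established strong $L^2(0,T;H)$ convergence of $w_n$.
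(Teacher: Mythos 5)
Your proposal is correct and follows essentially the same route as the paper: uniform bounds from Lemma \ref{exis_sol}, strong $L^2(0,T;H)$ convergence obtained by combining compactness on bounded balls with the uniform tail estimate of Lemma \ref{tail}, identification of the limit as $u_v$ through the weak formulation (a.e.\ convergence for the polynomial term, weak continuity of $v\mapsto\int\sigma(\cdot,u_v)v$ for the noise term), and finally an energy estimate on $u_{v_n}-u_v$ whose cross term is controlled by the already-established $L^2(0,T;H)$ convergence. The only cosmetic differences are that the paper extracts compactness via Arzel\`a--Ascoli in $C([0,T],(V\cap L^p(\R^n))^*)$ followed by interpolation rather than a localized Aubin--Lions argument, and integrates the final differential inequality directly rather than invoking Gronwall; both variants are sound.
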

 
 \begin{proof}
 The proof consists of four steps.
  Suppose $v_n \to v$ weakly in 
  $L^2(0,T; l^2)$.  We first derive the uniform estimates
  of solutions.
  
  {\bf Step 1: uniform estimates of solutions.}
 Since $v_n \to v$ weakly in 
  $L^2(0,T; l^2)$,  we know
  $\{v_n\}_{n=1}^\infty$ is bounded in
  $L^2(0,T; l^2)$, and thus by
   Lemma  \ref{exis_sol}
  we see that
  there exists $c_1=c_1(T)>0$ such that
  for all $n\in \N$,
  \be\label{wc_sol 1}
  \| u_{v_n}\|_{C([0,T], H)}
  + \| u_{v_n}\|_{L^2(0,T; V)  }
  + \| u_{v_n}\|_{L^p(0,T; L^p(\R^n) )  }
  + \| v_n \|_{L^2(0,T;l^2)  }
  \le c_1,
  \ee
  which along with \eqref{F5} shows that
  \be\label{wc_sol 2}
   \| F(\cdot, \cdot, u_{v_n} )\| 
   _{L^q(0,T; L^q(\R^n))}
  \le c_2,
  \ee
   where $c_2=c_2(T)>0$.

  Similar to \eqref{defnsol_3}, by \eqref{contr1}-\eqref{contr2}
  we have, for all $0 \le t\le T$,
   \be\label{wc_sol 2a}
  u_{v_n} (t)
  +\int_0^t (-\Delta)^\alpha u_{v_n}(s) ds
  +\int_0^t F(s,\cdot, u_{v_n}(s)) ds
  =u_0 +\int_0^t g(s) ds
  +\int_0^t \sigma (s, u_{v_n}(s)) v_n(s) ds,
\ee
 in $\left (V\bigcap L^p(\R^n)\right )^*$, which implies that
 for all $0\le  s \le t \le T$,
  $$
 \| u_{v_n} (t)- u_{v_n} (s)\|
 _{\left (V\bigcap L^p(\R^n)\right )^*}
\le \int_s^t
\|  (-\Delta)^\alpha u_{v_n}(s) 
\|_{V^*}ds
$$
  \be\label{wc_sol 3}
  +\int_s^t \|  F(s,\cdot, u_{v_n}(s)) \|_{L^q(\R^n)}ds
  +   \int_s^t \| g(s) \| ds
  +\int_s^t \| \sigma (s, u_{v_n}(s)) v_n(s)\| ds.
\ee
By \eqref{wc_sol 1} we have
   \be\label{wc_sol 4}
 \int_s^t
\|  (-\Delta)^\alpha u_{v_n}(s) 
\|_{V^*}ds
\le  \int_s^t
\|   u_{v_n}(s) 
\|_V  ds
\le (t-s)^{\frac 12}
\|   u_{v_n} 
\|_{L^2(0,T; V)}
\le c_1 (t-s)^{\frac 12}.
\ee
By \eqref{wc_sol 2} we have
   \be\label{wc_sol 5}
 \int_s^t \|  F(s,\cdot, u_{v_n}(s)) \|_{L^q(\R^n)}ds
 \le (t-s)^{\frac 1p}
 \| F(\cdot, \cdot, u_{v_n})\|_{L^q(0,T; L^q(\R^n))}
 \le  c_2 (t-s)^{\frac 1p}.
 \ee
 By Holder\rq{}s inequality we have
  \be\label{wc_sol 6}
  \int_s^t \| g(s)\| ds
  \le (t-s)^{\frac 12} \| g \|_{L^2(0,T; H)}.
  \ee
 For the last term in \eqref{wc_sol 3},
  by \eqref{sig6} and \eqref{wc_sol 1}
  we have
$$
 \int_s^t \| \sigma (s, u_{v_n}(s)) v_n(s)\| ds
 \le
   \int_s^t \| \sigma (s, u_{v_n}(s))\|_{\call_2(l^2,H)}
\|  v_n(s)\|_{l^2}  ds
 $$
 $$
  \le
  c_3 \int_s^t 
   \left ( 1
    +  \| u_{v_n} (s) \|
   + \|\sigma_1\|_{C([0,T],
   L^2(\R^n, l^2) )}
   \right )
\|  v_n(s)\|_{l^2}  ds
$$
$$
  \le
  c_3  \left ( 1
    +  c_1
   + \|\sigma_1\|_{C([0,T],
   L^2(\R^n, l^2) )}
   \right )
   \int_s^t 
 \|  v_n(s)\|_{l^2}  ds
$$
$$
  \le
  c_3  \left ( 1
    +  c_1
   + \|\sigma_1\|_{C([0,T],
   L^2(\R^n, l^2) )}
   \right )
 (t-s)^{\frac 12}
 \|  v_n \|_{L^2(0,T; l^2)}   
$$
   \be\label{wc_sol 7}
  \le
  c_1 c_3  \left ( 1
    +  c_1
   + \|\sigma_1\|_{C([0,T],
   L^2(\R^n, l^2) )}
   \right )
 (t-s)^{\frac 12} .
 \ee
 
 Since $p>2$, by   \eqref{wc_sol 3}-\eqref{wc_sol 7}
  we obtain, 
  for all $0\le  s \le t \le T$,
  \be\label{wc_sol 8}
 \| u_{v_n} (t)- u_{v_n} (s)\|
 _{\left (V\bigcap L^p(\R^n)\right )^*}
\le c_4 (t-s)^{\frac 1p},
\ee
where $c_4=c_4(T)>0$.
 
  By \eqref{wc_sol 1}
  and \eqref{wc_sol 2}
   we infer that
  there exists
  $z\in 
  L^\infty(0,T; H)\bigcap
  {L^2(0,T; V)  }\bigcap
  {L^p(0,T; L^p(\R^n) )  }$,
  $\varphi \in L^q(0,T; L^q(\R^n))$
  and  a subsequence of
  $\{u_{v_n}\}_{n=1}^\infty$
  (not relabeled) such that
  \be\label{wc_sol 10}
  u_{v_n}
  \to z
  \ \text{ weak-star  in  }  \  L^\infty(0,T; H),
  \ee
   \be\label{wc_sol 11}
  u_{v_n}
  \to z
  \ \text{ weakly  in  }  \  L^2(0,T; V),
  \ee 
   \be\label{wc_sol 12}
  u_{v_n}
  \to z
  \ \text{ weakly  in  }  \   L^p(0,T; L^p(\R^n) ) ,
  \ee
  and
  \be\label{wc_sol 13}
  F(\cdot,\cdot, u_{v_n})
  \to \varphi
  \ \text{ weakly  in  }  \   L^q(0,T; L^q(\R^n) ) .
  \ee

  We will prove $z=u_v$
  which is the unique solution
  of \eqref{contr1}-\eqref{contr2},
  for which a strong convergence of $\{u_{v_n}\}_{n=1}^\infty$
  is needed.
  
  {\bf Step 2: prove the   convergence:}
   \be\label{wc_sol 14}
     u_{v_n}
  \to z
  \ \text{ strongly  in  }  \  C([0,T], 
   (V\bigcap L^p(\R^n)   )^*).
  \ee 
  The Arzela-Ascoli theorem
  will be
   employed to show
  \eqref{wc_sol 14}, for which we need to
  verify:
  
  (i)  For every $t\in [0,T]$, the set
  $\{ u_{v_n} (t)\}_{n=1}^\infty$ is precompact
  in $ 
    (V\bigcap L^p(\R^n)   )^* $.
   
   (ii) The sequence $\{u_{v_n}\}_{n=1}^\infty$
   is equicontinuous  in 
   $ 
   (V\bigcap L^p(\R^n)   )^* $ on $[0,T]$.
   
  Note that the equicontinuity of 
   $\{u_{v_n}\}_{n=1}^\infty$
   in 
   $ 
   (V\bigcap L^p(\R^n)   )^* $
   follows from \eqref{wc_sol 8} immediately.
   So we only need to show
     (i).
     
     Let  $\theta$ be the cutoff function as given by
     \eqref{cutoff} and $\theta_m (x)
     =\theta ({\frac xm})$
     for $m\in \N$  and $x\in \R^n$.
     Given  $t\in [0,T]$, define
        \be\label{wc_sol 18}
        {\widetilde{u}}^m_{v_n} (t,x)
        =\theta_m (x)   u _{v_n} (t,x) 
        \quad \text{and}
        \quad
          {\widehat{u}}^m_{v_n} (t,x)
        =(1-\theta_m (x) )  u _{v_n} (t,x),
        \quad \forall \ x\in \R^n.
        \ee
        Then we have
         $ u _{v_n} (t)
         = {\widetilde{u}}^m_{v_n} (t)
         +  {\widehat{u}}^m_{v_n} (t)$
         for  $t\in [0,T]$ and $m\in \N$.
         By \eqref{wc_sol 1}  and
         \eqref{wc_sol 18} we see that
           \be\label{wc_sol 19}
          \|   {\widehat{u}}^m_{v_n }
          \|_{C([0,T], H)}
          \le   \|   u_{v_n }
          \|_{C([0,T], H)}
          \le c_1, \quad \forall \  n,m\in \N.
          \ee
          Given $m\in \N$, denote by
          $$
          Z_m =   \{u\in H:  \| u\| \le c_1,
          \  u(x) =0  \
          \text{for almost  all }  |x|\ge m\}.
          $$
         Then by
          \eqref{wc_sol 18}
          and \eqref{wc_sol 19},
          we know 
            that
             $\{{\widehat{u}}^m_{v_n }(t)\}_{n=1}^\infty
             \subseteq 
           Z_m$
          for every $m\in \N$  and $t\in [0,T]$.
          Since  the embedding $H^\alpha (|x|<m)
          \hookrightarrow L^2(|x|<m)$ is compact, we
          infer 
           that the set
          $Z_m$ is a precompact  subset
          of   $ 
   (V\bigcap L^p(\R^n)   )^* $
   for every $m\in \N$. Therefore
    for every $m\in \N$  and $t\in [0,T]$,
    \be\label{wc_sol 20}
       \{{\widehat{u}}^m_{v_n }(t)\}_{n=1}^\infty
         \ \text{ is precompact in }
         \  (V\bigcap L^p(\R^n)   )^*.
    \ee

          On the other hand, by
          \eqref{wc_sol 1}
         and   Lemma \ref{tail}
           we find that
          for every $\eps>0$, there exists
          $m_0=m_0(T, u_0, \eps)>0$ such that
          for all $m\ge m_0$ and  $t\in [0,T]$,
          $$
          \int_{|x|\ge {\frac 12} m}
          |u_{v_n} (t,x)|^2 dx <{\frac 14} \eps^2,
          \quad \forall \  n\in \N,
          $$
          which implies that for all
          $m\ge m_0$, 
        $ n\in \N$ and  $t\in [0,T]$,
          \be\label{wc_sol 21}
       \| {\widetilde{u}}^m_{v_n} (t )\|^2
       =\int_{|x|\ge {\frac 12} m}
       \theta_m^2 (x) |u_{v_n} (t,x)|^2 dx
       \le
       \int_{|x|\ge {\frac 12} m}
        |u_{v_n} (t,x)|^2 dx<{\frac 14} \eps^2.
   \ee

       By \eqref{wc_sol 20}-\eqref{wc_sol 21} we 
       find that  
         $  
          \{ {\widetilde{u}}^{m_0}_{v_n} (t)
         +  {\widehat{u}}^{m_0}_{v_n} (t)\}
         _{n=1}^\infty$
         has a finite open cover with
         radius $\eps$ in  
         $(V\bigcap L^p(\R^n)   )^*$.
         Since 
          $ u _{v_n} (t)
         = {\widetilde{u}}^m_{v_n} (t)
         +  {\widehat{u}}^m_{v_n} (t)$
         for   all $m\in \N$, we infer that
         for every $\eps>0$,
         the sequence
        $ \{u _{v_n} (t)\}_{n=1}^\infty$
         has a finite open cover with
         radius $\eps$ in  
         $(V\bigcap L^p(\R^n)   )^*$,
         which proves (ii).

         Then by \eqref{wc_sol 10}
         and the Arzela-Ascoli theorem, we find
         that there exists a further subsequence
         of  $\{u _{v_n}  \}_{n=1}^\infty$
         (not relabeled)  which satisfies 
         \eqref{wc_sol 14}.

  Next,
   we  show
   the strong convergence of
  $\{u _{v_n}  \}_{n=1}^\infty$
  in $L^2(0,T; H)$.

  {\bf Step 3: prove the   convergence:}
          \be\label{wc_sol 21a}
         u_{v_n}      \to  u_v
         \ \text{ strongly in } \
         L^2(0,T; H).
         \ee

         Note that
         $$
         \int_0^T
         \| u_{v_n} (t)  -z (t)\|^2 dt
         =\int_0^T
         (u_{v_n} (t)  -z (t),\
         u_{v_n} (t)  -z (t))_{
          (V\bigcap L^p(\R^n),
          (V\bigcap L^p(\R^n)   )^* ) 
         }
         $$
         $$
         \le
         \left (
         \int_0^T 
         \| u_{v_n} (t)  -z (t)\|_{
        V\bigcap L^p(\R^n)  }^2 dt
        \right )^{\frac 12}
         \left (
         \int_0^T 
         \| u_{v_n} (t)  -z (t)\|_{
         (V\bigcap L^p(\R^n)   )^*  }^2 dt
        \right )^{\frac 12} ,
        $$ 
          which along with
          \eqref{wc_sol 1}
         and \eqref{wc_sol 14} shows that
        \be\label{wc_sol 22}
         u_{v_n}      \to z 
         \ \text{ strongly in } \
         L^2(0,T; H).
         \ee    
         
         To prove \eqref{wc_sol 21a},
         by \eqref{wc_sol 22}  we only need to show
         $z=u_v$.
         By \eqref{wc_sol 2a} we have, 
       for all $0 \le t\le T$
       and $\xi \in  V\bigcap L^p(\R^n) $,
   \be\label{wc_sol 22a}
  (u_{v_n} (t), \xi)
  +\int_0^t 
  ( (-\Delta)^{\frac {\alpha}2}  u_{v_n}(s),
  (-\Delta)^{\frac {\alpha}2}  \xi
  ) ds
  +\int_0^t \int_{\R^n} F(s,x, u_{v_n}(s))
  \xi (x) dx  ds
  $$
  $$
  =(u_0,\xi)  +\int_0^t (g(s),\xi)  ds
  +\int_0^t 
  (\sigma (s, u_{v_n}(s)) v_n(s), \xi)  ds.
\ee 
We  now prove $z=u_v$ by taking the limit
of
\eqref{wc_sol 22a} as $n\to \infty$.
 We first deal with the nonlinear  term 
on the left-hand side of \eqref{wc_sol 22a}.
       By \eqref{wc_sol 22} we infer that,
       up to a subsequence,
           \be\label{wc_sol 23}
         u_{v_n}      \to z 
         \ \text{ a.e. on  } \
         (0,T)\times \R^n.
         \ee
         By the continuity of $F$ and \eqref{wc_sol 23}
         we obtain
         $$
         F(t,x, u_{v_n})
         \to F(t,x, z)
             \ \text{ a.e. on  } \
         (0,T)\times \R^n
         $$
         which together
         with \eqref{wc_sol 13}
         and  Mazur\rq{}s  theorem  implies
         that
         $\varphi = F(t,x,z)$ and hence
         \be\label{wc_sol 24}
         F(t,x, u_{v_n})
         \to F(t,x, z)
             \ \text{ weakly in  } \
         L^q(0,T; L^q(\R^n)).
       \ee
       
       Next, 
       We   deal with the nonlinear  term 
on the right-hand side of \eqref{wc_sol 22a}
by showing  that for every
       $t\in [0,T]$,
       \be\label{wc_sol 25}
      \lim_{n\to \infty}
       \int_0^t
       \left ( \sigma (s, u_{v_n}(s))-
        \sigma (s, z (s)) \right )
         v_n(s)   ds  =0
         \ \text{ in } \ H.
         \ee
    By \eqref{sig7}
    and \eqref{wc_sol 1} we obtain
     $$
     \|  \int_0^t
       \left ( \sigma (s, u_{v_n}(s))-
        \sigma (s, z (s)) \right )
         v_n(s)   ds\|
         \le
          \int_0^t
       \| \sigma (s, u_{v_n}(s))-
        \sigma (s, z (s))\|_{\call_2(l^2, H)}
        \|
         v_n(s)\|_{l^2}   ds
         $$
         $$
       \le \| \kappa\|_{L^\infty (\R^n)}
      \left ( \sum_{k=1}^\infty
       \alpha_k^2
       \right )^{\frac 12}
          \int_0^t
       \|  u_{v_n}(s) 
         -z (s)  \|
        \|
         v_n(s)\|_{l^2}   ds
         $$
      $$
       \le \| \kappa\|_{L^\infty (\R^n)}
      \left ( \sum_{k=1}^\infty
       \alpha_k^2
       \right )^{\frac 12}
         \left (
          \int_0^T
       \|  u_{v_n}(s) 
         -z (s)  \|^2 ds
         \right )^{\frac 12}
       \left (
          \int_0^T  \|
         v_n(s)\|_{l^2}^2   ds
         \right )^{\frac 12}
         $$
       $$
       \le c_1 \| \kappa\|_{L^\infty (\R^n)}
      \left ( \sum_{k=1}^\infty
       \alpha_k^2
       \right )^{\frac 12}
         \left (
          \int_0^T
       \|  u_{v_n}(s) 
         -z (s)  \|^2 ds
         \right )^{\frac 12},
      $$
      which along with \eqref{wc_sol 22}
      gives \eqref{wc_sol 25}.

     We now  prove that
       for every
       $t\in [0,T]$,
              \be\label{wc_sol 26}
      \lim_{n\to \infty}
       \int_0^t
         \sigma (s, z (s))  
         v_n(s)   ds  =  \int_0^t
         \sigma (s, z (s))  
         v (s)   ds
         \ \text{ weakly  in } \ H.
         \ee
         Given $t\in [0,T]$, define an operator
         $\calg: L^2(0,T; l^2)
         \to H$ by
         $$
         \calg (\widetilde{v})
         =\int_0^t  \sigma (s, z (s))  
         {\widetilde{v}} (s)   ds,
         \quad \forall \  \widetilde{v}
         \in L^2(0,T; l^2).
         $$
         By \eqref{sig6} and \eqref{wc_sol 10}, we
         know that
         $\calg$ is well-defined, and
          $$
          \| \calg (\widetilde{v})\|
          \le
          \int_0^t \| \sigma (s, z (s))\|
          _{\call_2(l^2,H)}  
         \| {\widetilde{v}} (s) \|_{l^2}
           ds
           $$
           $$ 
          \le
         \left ( \int_0^T \| \sigma (s, z (s))\|^2
         _{\call_2(l^2,H)}  ds
         \right )^{\frac 12}
            \left ( \int_0^T
         \| {\widetilde{v}} (s) \|_{l^2}^2 ds
         \right )^{\frac 12}
           $$
      $$ 
          \le
         \left ( \int_0^T
         \left (L_1 +L_1 \| z(s)\|^2
         +2 \| \sigma_1 (s) \|^2_{L^2(\R^n,
          l^2)} \right )
          ds
         \right )^{\frac 12}
            \left ( \int_0^T
         \| {\widetilde{v}} (s) \|_{l^2}^2 ds
         \right )^{\frac 12}
           $$
            $$ 
          \le T
         \left ( L_1  
          +L_1   \| z \|^2_{L^\infty(0,T; H)}
         +2  \| \sigma_1 \|^2_{C([0,T], L^2(\R^n,
          l^2) )}  
        \right )^{\frac 12}
          \| {\widetilde{v}}  \|_{L^2(0,T; l^2 )}  ,
           $$
         which shows that
         $\calg: L^2(0,T; l^2)
         \to H$ is a linear bounded operator,
         and hence it is weakly continuous.
         Since 
         $v_n \to v$ weakly in 
         $L^2(0,T; l^2)$, we obtain
         $\calg (v_n) \to \calg(v)$ weakly
         in $H$, which gives \eqref{wc_sol 26}.
         
        It follows from
        \eqref{wc_sol 25}-\eqref{wc_sol 26} that
        for all $\xi \in V\bigcap L^p(\R^n)$,
     $$    \int_0^t 
  (\sigma (s, u_{v_n}(s)) v_n(s), \xi)  ds
  $$
  $$
  =
 \left (\xi, \  \int_0^t 
   (\sigma (s, u_{v_n}(s))
  -
  \sigma (s, z (s) ))
   v_n(s)   ds
   \right )
   +  \left (\xi, \  \int_0^t  
  \sigma (s, z (s)) 
   v_n(s)  ds \right )
      $$   
   \be\label{wc_sol 27}
   \to     \int_0^t  
  (\sigma (s, z (s)) 
   v(s), \ \xi)   ds,
   \quad \text{as} \ n \to \infty.
   \ee     
    Letting $n\to \infty$ in \eqref{wc_sol 22a},
    by
    \eqref{wc_sol 11},
    \eqref{wc_sol 14} and
         \eqref{wc_sol 27}, we obtain that
         for all $0 \le t\le T$
       and $\xi \in  V\bigcap L^p(\R^n) $,
  $$
  (z (t), \xi)
  +\int_0^t 
  ( (-\Delta)^{\frac {\alpha}2}  z(s),
  (-\Delta)^{\frac {\alpha}2}  \xi
  ) ds
  +\int_0^t \int_{\R^n} F(s,x, z(s))
  \xi (x) dx  ds
  $$
  $$
  =(u_0,\xi)  +\int_0^t (g(s),\xi)  ds
  +\int_0^t 
  (\sigma (s, z(s)) v (s), \xi)  ds,
$$  
which 
shows that $z$ is a solution of \eqref{contr1}-\eqref{contr2}.
Since the solution of 
\eqref{contr1}-\eqref{contr2} is unique,
we have $z=u_v$, which together with
\eqref{wc_sol 22} yields \eqref{wc_sol 21a}.
Next, we improve the convergence of 
 \eqref{wc_sol 21a}.

  {\bf Step 4: prove the convergence:}
          \be\label{wc_sol 30}
         u_{v_n}      \to  u_v
         \ \text{ strongly in } \
         C([0,T], H) \bigcap L^2(0,T; V).
         \ee
    By \eqref{contr1}-\eqref{contr2}
     we  have
     $$
     {\frac d{dt}}
     \|  u_{v_n}   (t) - u_v  (t)\|^2
     +2  \| (-\Delta)^{\frac {\alpha}2}
     ( u_{v_n}   (t) - u_v  (t) )\|^2
     $$
     $$
    =-
     2\int_{\R^n}
      (F(t,x,  u_{v_n} (t)) -F(t,x,  u_v(t))) 
       ( u_{v_n}   (t) - u_v  (t) ) dx
     $$
     \be\label{wc_sol 31}
     +2
     \left (
     \sigma (t,  u_{v_n} (t))v_n(t)-
     \sigma (t,  u_v(t))v (t),
     \   u_{v_n}   (t) - u_v  (t)
     \right ).
     \ee
      By \eqref{F4} we have
   \be\label{wc_sol 32}
    -
     2\int_{\R^n}
      (F(t,x,  u_{v_n} (t)) -F(t,x,  u_v(t))) 
       ( u_{v_n}   (t) - u_v  (t) ) dx
\le
2\|\psi_3(t) \|_{L^\infty(\R^n)} \|u_{v_n}   (t) - u_v  (t) \|^2.
\ee
  For the last term in \eqref{wc_sol 31},
       we have  for all $t\in [0,T]$,
      $$
      2
     \left (
     \sigma (t,  u_{v_n} (t))v_n(t)-
     \sigma (t,  u_v(t))v (t),
     \   u_{v_n}   (t) - u_v  (t)
     \right )
       $$  
      \be\label{wc_sol 33}
      \le 2
     \left (
    \| \sigma (t,  u_{v_n} (t))\|_{\call_2(l^2,H)}
    \|v_n(t)\|_{l^2}
    + \|
     \sigma (t,  u_v(t))\|_{\call_2(l^2, H)}
     \| v (t)\|_{l^2} \right )
    \| u_{v_n}   (t) - u_v  (t)
  \|.
  \ee 
       By  \eqref{sig6} and \eqref{wc_sol 1}, 
         we have  for all $t\in [0,T]$,
      \be\label{wc_sol 34}
    \| \sigma (t,  u_{v_n} (t))\|_{\call_2(l^2,H)}^2
    \le
    L_1 (1+ \|  u_{v_n} (t)\|^2)
    +2 \|\sigma_1\|^2_{
    C([0,T], L^2(\R^n, l^2))
    }
    \le c_5,
    \ee
    where $c_5=c_5(T)>0$.
    Similarly,    we have  for all $t\in [0,T]$,
     \be\label{wc_sol 35}
    \| \sigma (t,  u_v(t))\|_{\call_2(l^2,H)}^2
    \le
    L_1 (1+ \|  u_v(t)\|^2)
    +2 \|\sigma_1\|^2_{
    C([0,T], L^2(\R^n, l^2))
    }
    \le c_6,
    \ee
    where $c_6=c_6(T)>0$.
    It follows  from
    \eqref{wc_sol 33}-\eqref{wc_sol 35}
    that   for all $t\in [0,T]$,
      $$
      2
     \left (
     \sigma (t,  u_{v_n} (t))v_n(t)-
     \sigma (t,  u_v(t))v (t),
     \   u_{v_n}   (t) - u_v  (t)
     \right )
     $$
    \be\label{wc_sol 36}
    \le c_7
     \left (\| v_n(t)\|_{l^2}
     +  \|v (t) \|_{l^2} \right ) 
     \|   u_{v_n}   (t) - u_v  (t)\|,
   \ee
      where $c_7=c_7(T)>0$.
    By \eqref{wc_sol 31}, \eqref{wc_sol 32} 
    and
    \eqref{wc_sol 36} 
    we obtain   for all $t\in (0, T)$,
      $$
     {\frac d{dt}}
     \|  u_{v_n}   (t) - u_v  (t)\|^2
     +2  \| (-\Delta)^{\frac {\alpha}2}
     ( u_{v_n}   (t) - u_v  (t) )\|^2
   $$  
      \be\label{wc_sol 37}
    \le    
2\|\psi_3(t) \|_{L^\infty(\R^n)}
 \|u_{v_n}   (t) - u_v  (t) \|^2
     +
   c_7
     \left (\| v_n(t)\|_{l^2}
     +  \|v (t) \|_{l^2} \right ) 
     \|   u_{v_n}   (t) - u_v  (t)\|.
   \ee
   Integrate \eqref{wc_sol 37}
   on $(0,t)$ with $t\in [0,T]$ to obtain
       $$ 
     \|  u_{v_n}   (t) - u_v  (t)\|^2
     +2  
     \int_0^t
     \| (-\Delta)^{\frac {\alpha}2}
     ( u_{v_n}   (s) - u_v  (s) )\|^2 ds
   $$  
     $$
    \le    
2\int_0^t \|\psi_3(s) \|_{L^\infty(\R^n)}
 \|u_{v_n}   (s) - u_v  (s) \|^2 ds
     +
   c_7\int_0^t
     \left (\| v_n(s)\|_{l^2}
     +  \|v (s) \|_{l^2} \right ) 
     \|   u_{v_n}   (s) - u_v  (s)\| ds
$$
  \be\label{wc_sol 38}
    \le    
2\|\psi_3 \|_{L^\infty(0,T;
L^\infty(\R^n) )} 
 \|u_{v_n}    - u_v   \|^2 _{L^2(0,T; H)} 
     +
   c_7 
     \left (\| v_n \|_{L^2(0,T; l^2 )}
     +  \|v  \| _{L^2(0,T; l^2 )} \right ) 
     \|u_{v_n}    - u_v   \|  _{L^2(0,T; H)}.
\ee
By \eqref{wc_sol 1}
and \eqref{wc_sol 21a} we see  that
the right-hand side of 
  \eqref{wc_sol 38}
  converges  to zero as $n\to \infty$,
  from which \eqref{wc_sol 30} follows.
   \end{proof}

  Let $\calg^0: C([0,T], U) \to C([0,T], H)
  \bigcap L^2(0,T; V)$
  be a map given 
by, for every $\xi \in  C([0,T], U)$, 
  \be\label{calg0}
\calg^0 (\xi)
=
\left \{
\begin{array}{ll}
u_v & \text{ if } \xi= \int_0^\cdot v(t) dt
\ \text{ for some } v\in L^2(0,T; l^2);\\
0, &  \text{ otherwise} ,
\end{array}
\right.
\ee
 where $u_v$ is the
 solution
 of \eqref{contr1}-\eqref{contr2}.
  Given  
 $\phi \in   C([0,T], H)
  \bigcap L^2(0,T; V)$, denote by
\be\label{rate_LS}
 I(\phi)
 =\inf
 \left \{
 {\frac 12} \int_0^T \| v(s)\|_{l^2} ^2 ds:
 \ v\in L^2(0,T; l^2), \ u_v =\phi
 \right \},
\ee
 where $u_v$ is the
 solution
 of \eqref{contr1}-\eqref{contr2}.
Again, by default,   $\inf \emptyset =\infty$.

 Recall that for every $N>0$,
$ 
S_N =\{ v\in L^2(0,T; l^2): \int_0^T
\| v(t) \|^2_{l^2} dt \le N\}.
$ 
  We will prove 
 the solutions of \eqref{intr1}-\eqref{intr3}
 satisfies the LDP  
 in  $C([0,T], H)
  \bigcap L^2(0,T; V)$ as $\eps \to 0$,
  for which the following lemma is needed.

 \begin{lem}\label{clev}
 Suppose   \eqref{F1}-\eqref{F4}
  and \eqref{sig0}-\eqref{sig3}  hold.
Then for every $N<\infty$, the set
\be\label{clev 1}
K_N
=\left \{
\calg^0
\left (
\int_0^\cdot v(t) dt
\right ) : \  v\in S_N
\right \}
\ee
is a compact subset
of $ C([0,T], H)
  \bigcap L^2(0,T; V)$.
 \end{lem}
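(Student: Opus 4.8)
The plan is to realize $K_N$ as the continuous image of a compact metric space under the control-to-solution map, so that compactness is immediate from the continuity of that map, which has essentially already been established in Lemma \ref{wc_sol}.

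First I would recall, as already noted in the excerpt, that $S_N$ equipped with the weak topology of $L^2(0,T;l^2)$ is a compact Polish space: $L^2(0,T;l^2)$ is a separable Hilbert space, so the closed bounded convex set $S_N$ is weakly compact and the weak topology on it is metrizable. Next I would introduce the map $\Gamma : S_N \to C([0,T],H)\bigcap L^2(0,T;V)$ by $\Gamma(v)=u_v$, where $u_v$ is the unique solution of \eqref{contr1}-\eqref{contr2} provided by Lemma \ref{exis_sol}. Since $\int_0^\cdot v_1(t)\,dt=\int_0^\cdot v_2(t)\,dt$ forces $v_1=v_2$ in $L^2(0,T;l^2)$, the restriction of $\calg^0$ to $\{\int_0^\cdot v(t)\,dt : v\in S_N\}$ agrees with $\Gamma$ under the obvious identification, and hence $K_N=\Gamma(S_N)$.

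The key step is the continuity of $\Gamma$ from $(S_N,\text{weak})$ into $C([0,T],H)\bigcap L^2(0,T;V)$ with its strong (norm) topology. Because $S_N$ with the weak topology is metrizable, it suffices to verify sequential continuity: if $v_n\to v$ weakly in $L^2(0,T;l^2)$ with $v_n,v\in S_N$, then $u_{v_n}\to u_v$ strongly in $C([0,T],H)\bigcap L^2(0,T;V)$. This is exactly the assertion of Lemma \ref{wc_sol}. Therefore $\Gamma$ is continuous, and $K_N=\Gamma(S_N)$ is the image of a compact set under a continuous map, hence a compact subset of $C([0,T],H)\bigcap L^2(0,T;V)$, which completes the proof.

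I do not expect a genuine obstacle in this lemma: all the substantive analysis — the uniform a priori bounds of Lemma \ref{exis_sol}, the uniform tail-ends estimates of Lemma \ref{tail} used to recover compactness on the unbounded domain, and the weak-to-strong continuity of Lemma \ref{wc_sol} — has already been carried out, and the present statement is a soft topological consequence. The only point that warrants a line of care is the passage from the sequential formulation of Lemma \ref{wc_sol} to honest continuity of $\Gamma$, which is justified precisely because the weak topology on the bounded set $S_N$ is metrizable.
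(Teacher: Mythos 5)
Your proof is correct and takes essentially the same route as the paper: both arguments rest on the weak compactness (and metrizability) of $S_N$ together with the weak-to-strong continuity established in Lemma \ref{wc_sol}. The paper phrases this as sequential compactness of $K_N$ (extract a weakly convergent subsequence $v_{n_k}\to v$ in $S_N$ and apply Lemma \ref{wc_sol}), whereas you package the same facts as ``continuous image of a compact set''; the difference is purely cosmetic.
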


 \begin{proof}
 By \eqref{calg0} we see that
 $  K_N
=\left \{
u_v : \  v\in S_N
\right \} 
$, 
where $u_v$
is the solution of \eqref{contr1}-\eqref{contr2}.
Suppose 
  $\{u_{v_n}\}_{n=1}^\infty$
 is a sequence
in $K_N$ with $v_n \in S_N$
for all $n\in \N$.
 Then  
there exists $v\in S_N$ and a subsequence
$\{ {v_{n_k}}\}_{k=1}^\infty$
such that
$v_{n_k} \to v$ weakly in 
$L^2(0,T; l^2)$.
By Lemma \ref{wc_sol} we infer  that
$u_{v_{n_k}} \to u_v$   
in $ C([0,T], H)
  \bigcap L^2(0,T; V) $, 
  which concludes the proof.
 \end{proof}

 \begin{lem}\label{gep}
  Suppose   \eqref{F1}-\eqref{F4}
  and \eqref{sig0}-\eqref{sig3}
  hold, $T>0$    and
 $v\in \cala_N$
for some  $N<\infty$.
If $u^\eps_v
=
 \calg^\eps
\left (
W +\eps^{-\frac 12}\int_0^\cdot
v (t) dt
\right )$, then
$u^\eps_v$ is 
the unique solution to  
 \be\label{gep 1}
  du_v^{\eps}  
  + (-\Delta)^ \alpha  u_v^{\eps}   dt
  + F(t, \cdot, u_v^{\eps} ) dt
  =  
    g(t)   dt
    +  \sigma (t,   u_v^{\eps} )    v dt
  +\sqrt{\eps} \sigma (t,   u_v^{\eps} )    {dW} ,
  \ \ t\in (0,T),
  \ee  
 with  initial condition 
 $
 u^\eps_{v} (0)=u_0\in H. $

  Furthermore,
     for each $R>0$  
    there  exists
       $M_3 =M_3  (R, T,N)>0$ such that  
       for any $u_{0} 
       \in H$ with 
    $\| u_{0} \|\le R  $   and any
    $v \in \cala_N$,
    the solution $u_{v}^\eps$    
    satisfies for all $\eps\in (0,1)$,
  \be\label{gep 2}
 \E\left (
 \| u^\eps_v \|^2_{C([0,T], H)}
 +  \| u^\eps_v \|^2_{L^2(0,T;V )}
 + \| u^\eps_v \|^p_{L^p(0,T; L^p(\R^n))}
 \right )
 \le M_3.
 \ee
 \end{lem}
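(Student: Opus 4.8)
The plan is to establish the two assertions of the lemma in turn: first that $u^\eps_v$ is the unique solution of the controlled stochastic equation \eqref{gep 1}, and then the uniform bound \eqref{gep 2}.

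\textbf{Identification of $u^\eps_v$ via Girsanov.} Equation \eqref{gep 1} differs from \eqref{intr1} only through the extra drift $\sigma(t,u^\eps_v)\,v\,dt$. Since $v\in\cala_N$, we have $\int_0^T\|v(t)\|_{l^2}^2\,dt\le N$ $P$-a.s., and by \eqref{sig6}--\eqref{sig7} the map $u\mapsto\sigma(t,u)v(t)$ is of linear growth in $H$ and Lipschitz with the $L^1$-in-time constant $\|\kappa\|_{L^\infty(\o)}(\sum_k\alpha_k^2)^{1/2}\|v(t)\|_{l^2}$; hence the arguments proving Proposition \ref{posed1} (see \cite{wangJDE2019}) apply verbatim and give existence and pathwise uniqueness of a solution to \eqref{gep 1} in the class \eqref{defnsol_1}. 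To identify $\calg^\eps(W+\eps^{-1/2}\int_0^\cdot v(t)\,dt)$ with this solution, I would invoke Girsanov's theorem: because $\int_0^T\|v(t)\|_{l^2}^2\,dt\le N<\infty$ a.s., the stochastic exponential of $\eps^{-1/2}\int_0^\cdot v\,dW$ is a genuine $\{\calf_t\}$-martingale and defines a measure equivalent to $P$ under which $\widetilde W:=W+\eps^{-1/2}\int_0^\cdot v(t)\,dt$ is again a cylindrical Wiener process on $l^2$. By the very definition of $\calg^\eps$, the process $\calg^\eps(\widetilde W)$ solves \eqref{intr1} with $W$ replaced by $\widetilde W$; since $\sqrt\eps\,\sigma\,d\widetilde W=\sqrt\eps\,\sigma\,dW+\sigma v\,dt$, this is precisely \eqref{gep 1}. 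Pathwise uniqueness transfers the identity back to $P$, so $u^\eps_v=\calg^\eps(W+\eps^{-1/2}\int_0^\cdot v(t)\,dt)$ is the unique solution of \eqref{gep 1}.

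\textbf{Uniform energy estimate.} Next I would apply It\^o's formula to $\|u^\eps_v(t)\|^2$ in the Gelfand triple $V\hookrightarrow H\hookrightarrow V^*$, exactly as in the energy equation of Proposition \ref{posed1}, now carrying the additional term $2\int_0^t(\sigma(s,u^\eps_v(s))v(s),u^\eps_v(s))\,ds$. The dissipative term is controlled by \eqref{F2}, which gives $-2\int_0^t\!\int_{\o}F(s,x,u^\eps_v)u^\eps_v\,dx\,ds\le-2\lambda_1\int_0^t\|u^\eps_v(s)\|_{L^p(\o)}^p\,ds+2\int_0^t\|\psi_1(s)\|_{L^1(\o)}\,ds$; the terms $2(u^\eps_v,g)$ and $\eps\|\sigma(s,u^\eps_v)\|_{\call_2(l^2,H)}^2$ are handled by Young's inequality and \eqref{sig6} (using $\eps<1$); and for the new term one writes, using \eqref{sig6} and the continuity of $\sigma_1$,
$$
2|(\sigma(s,u^\eps_v)v(s),u^\eps_v)|\le\|\sigma(s,u^\eps_v)\|_{\call_2(l^2,H)}^2\|v(s)\|_{l^2}^2+\|u^\eps_v\|^2\le\big(1+L_1\|v(s)\|_{l^2}^2\big)\|u^\eps_v\|^2+c\,\|v(s)\|_{l^2}^2 .
$$
The crucial point is that the resulting Gronwall coefficient is $\exp\big(c\int_0^T(1+\|v(s)\|_{l^2}^2)\,ds\big)\le e^{c(T+N)}$ $P$-a.s., a deterministic bound independent of $\eps$ and of $v\in\cala_N$. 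Taking expectations to annihilate the martingale term and applying Gronwall's inequality gives $\sup_{t\in[0,T]}\E\|u^\eps_v(t)\|^2\le M_3$; integrating the energy identity over $[0,T]$ and using \eqref{sig6} once more yields $\E\int_0^T\big(\|(-\Delta)^{\alpha/2}u^\eps_v(s)\|^2+\|u^\eps_v(s)\|_{L^p(\o)}^p\big)\,ds\le M_3$, which together with the pointwise bound controls the $L^2(0,T;V)$ and $L^p(0,T;L^p(\o))$ norms.

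\textbf{From pointwise to uniform-in-time, and the main obstacle.} To reach the $C([0,T],H)$ bound I would, before taking expectations, retain the supremum over $[0,t]$ and estimate the martingale term by the Burkholder--Davis--Gundy inequality,
$$
\E\sup_{s\in[0,t]}\Big|2\sqrt\eps\!\int_0^s(u^\eps_v,\sigma\,dW)\Big|\le C\sqrt\eps\,\E\Big(\int_0^t\|u^\eps_v\|^2\|\sigma(r,u^\eps_v)\|_{\call_2(l^2,H)}^2\,dr\Big)^{1/2}\le\tfrac12\E\sup_{s\in[0,t]}\|u^\eps_v(s)\|^2+C\eps\,\E\!\int_0^t\|\sigma(r,u^\eps_v)\|_{\call_2(l^2,H)}^2\,dr,
$$
and then absorb the $\tfrac12\E\sup$ term into the left-hand side; the remaining integral is bounded via \eqref{sig6} and the $\sup_t\E\|u^\eps_v(t)\|^2$ estimate already obtained. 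This gives $\E\|u^\eps_v\|_{C([0,T],H)}^2\le M_3$, completing \eqref{gep 2}. Beyond bookkeeping, the only genuinely delicate point is making sure that every constant depends on $v$ exclusively through the a.s.\ bound $\int_0^T\|v\|_{l^2}^2\le N$ — this is what renders the Gronwall factor, and hence $M_3$, uniform over $\cala_N$ and over $\eps\in(0,1)$ — together with the standard BDG absorption argument for the supremum norm; the term $\sigma(u)v$, being only of linear growth in $u$ and multiplied by the square-integrable-in-time factor $\|v(t)\|_{l^2}$, is precisely what forces this structure.
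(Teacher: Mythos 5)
Your proposal follows essentially the same route as the paper: Girsanov's theorem to identify $\calg^\eps\bigl(W+\eps^{-1/2}\int_0^\cdot v\,dt\bigr)$ with the unique solution of \eqref{gep 1}, then the It\^o energy identity with the control term estimated via \eqref{sig6}, a pathwise Gronwall argument exploiting the almost sure bound $\int_0^T\|v(t)\|_{l^2}^2\,dt\le N$, and the Burkholder--Davis--Gundy inequality with absorption to reach the $C([0,T],H)$ bound. One small caution: the phrase ``taking expectations to annihilate the martingale term and applying Gronwall's inequality'' must be read with the Gronwall step performed pathwise first (as you in fact indicate when you note the a.s.\ bound $e^{c(T+N)}$), since the coefficient $1+L_1\|v(s)\|_{l^2}^2$ is random and does not pass through the expectation; after the pathwise Gronwall the stochastic integral survives as $\sup_{s}|M(s)|$ rather than vanishing, and is then handled by BDG and a second Gronwall exactly as in your final paragraph and in the paper's \eqref{gep 12}--\eqref{gep 14}.
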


 \begin{proof}
 By Girsanov\rq{}s theorem,
 we find that
for every $\eps>0$ and
  $v\in \cala_N$, 
$\widetilde{W}
=
W +\eps^{-\frac 12}\int_0^\cdot
v (t) dt$
is a 
cylindrical Wiener
process in $l^2$  
under the probability
$\widetilde{P}^\eps_{v}$:
$$
 {\frac {d \widetilde{P}^\eps_{v}}{dP}}
 =
 \exp
 \left \{
 -\eps^{-\frac 12}
 \int_0^T v
 (t) dW
 -{\frac 12}\eps^{-1}
 \int_0^T \| v(t) \|_H^2 dt
 \right \} ,
 $$
 and hence  
 $u_{v}^\eps
 =\calg^\eps
 (\widetilde{W})
  $
is the unique solution of 
\eqref{intr1}-\eqref{intr3}
with $W$ replaced by
$\widetilde{W}$, which
implies that 
$u^\eps_{v}$ is the unique solution of
\eqref{gep 1} with initial condition
$
 u^\eps_{v} (0)=u_0$.

Next, we prove  \eqref{gep 2}.
By \eqref{gep 1} and Ito\rq{}s rule we have,
for $t\in [0,T]$,
$$
\| u_{v}^\eps(t) \|^2
+2\int_0^t
\| (-\Delta)^{\frac {\alpha}2}  u_{v}^\eps
(s) \|^2 ds
+
2\int_0^t\int_{\R^n}
 F(s,x,  u_{v}^\eps (s) )  u_{v}^\eps (s) ds
 $$
 $$
 =
 \| u_0 \|^2
 +2\int_0^t ( u_{v}^\eps(s), g(s)) ds
 +2\int_0^t
 (u_{v}^\eps (s), \sigma (s, u_{v}^\eps (s)) v(s)) ds
 $$
\be\label{gep 3}
 +\eps\int_0^t
 \| \sigma (s, u_{v}^\eps (s))\|^2_{\call_2(l^2, H)} ds
 + 2\sqrt{\eps}
 \int_0^t (u_{v}^\eps (s),
 \sigma (s, u_{v}^\eps (s)) dW).
 \ee
   By  \eqref{F2} we have
    \be\label{gep 4}
 2 \int_{\R^n} F(s,x,  u_{v}^\eps (s))   u_{v}^\eps (s)  dx 
  \ge  2  \lambda_1 \|  u_{v}^\eps (s)\|^p_{L^p(\R^n)}
  - 2\|\psi_1 (t)\|_{L^1(\R^n)}.
  \ee 
    By Young\rq{}s inequality we have
 \be\label{gep 5} 
 2  |(  u_{v}^\eps (s), g(s) )|
  \le    \| u_{v}^\eps (s) \|^2 +
    \| g(s \|^2.
  \ee
  Similar to \eqref{exis_solp5} we have 
  for  $s\in [0,T]$,
 $$
  2 | ( u_{v}^\eps (s),   \sigma (s,  u_{v}^\eps (s)) v(s) )|
   $$
   \be\label{gep 6}
  \le  
   \left ( 1
+ L_1 \| v(s) \|^2_{l^2}
\right )   \| u_{v}^\eps (s) \|^2
+ \left (   L_1 + 2
\|\sigma_1\|^2_{
C([0,T], L^2(\R^n, l^2))
}
\right )
  \| v(s)\|^2_{l^2} .
 \ee 
  By \eqref{sig6} we have
  for $\eps\in (0,1)$ and $s\in [0, T]$,
    \be\label{gep 7}
  \eps 
 \| \sigma (s, u_{v}^\eps (s))\|^2_{\call_2(l^2, H)}  
 \le
 \eps L_1 ( 1+   \| u_{v}^\eps (s) \|^2)
 +2 \eps
 \|\sigma_1\|^2_{
C([0,T], L^2(\R^n, l^2))}.
\ee
    By \eqref{gep 3}-\eqref{gep 7} we get
   for $\eps\in (0,1)$ and $t\in [0, T]$,
    $$
\| u_{v}^\eps(t) \|^2
+2\int_0^t
\| (-\Delta)^{\frac {\alpha}2}  u_{v}^\eps
(s) \|^2 ds
+
2\lambda_1
\int_0^t \| u_{v}^\eps (s) \|^p_{L^p(\R^n)} ds
 $$
 $$
 \le
 \| u_0 \|^2
 + c_1\int_0^t
 (1+ \| v(s)\|^2_{l^2}) \| u_{v}^\eps (s) \|^2ds
 +c_1\int_0^t \| v(s) \|^2_{l^2} ds
 +c_1t + \int_0^t
 \| g(s) \|^2 ds
 $$
  \be\label{gep 10}
  + 2 \int_0^t \| \psi_1 (s) \|_{L^1(\R^n)} ds 
 + 2 
 \int_0^t (u_{v}^\eps (s),
 \sigma (s, u_{v}^\eps (s)) dW),
 \ee
 where $c_1=c_1(T)>0$.
 
   If $u_0\in H$ with $\| u_0\|\le R$
  and $v\in \cala_N$, then by
  \eqref{gep 10} we get
  for all $t\in [0,T]$, 
  $$
 \| u_{v}^\eps(t) \|^2
+2\int_0^t
\| (-\Delta)^{\frac {\alpha}2}  u_{v}^\eps
(s) \|^2 ds
+
2\lambda_1
\int_0^t \| u_{v}^\eps (s) \|^p_{L^p(\R^n)} ds
 $$
  \be\label{gep 11} 
 \le c_2   
 + c_1\int_0^t
 (1+ \| v(s)\|^2_{l^2}) \| u_{v}^\eps (s) \|^2ds
 + M(t) ,
\ee
 where 
 $$
  M(t) = 2 
 \int_0^t (u_{v}^\eps (s),
 \sigma (s, u_{v}^\eps (s)) dW), 
 \quad
 c_2
 =R^2
   +c_1N
 +c_1T+ \|g\|^2_{L^2(0,T; H)}
 +   2  \| \psi_1  \|_{L^1(0,T; L^1(\R^n) )}.
 $$
 
 By Gronwall\rq{}s inequality, we get
 from \eqref{gep 11} that
  for all $t\in [0,T]$, 
  $$
 \| u_{v}^\eps(t) \|^2
+2\int_0^t
\| (-\Delta)^{\frac {\alpha}2}  u_{v}^\eps
(s) \|^2 ds
+
2\lambda_1
\int_0^t \| u_{v}^\eps (s) \|^p_{L^p(\R^n)} ds
 $$
$$
 \le c_2 
 e^{c_1
 \int_0^t
 (1+ \| v(r)\|^2_{l^2}) dr
 }  
 + 
 \int_0^t
 e^{ c_1
 \int_s^t
 (1+ \| v(r)\|^2_{l^2}) dr
 }
 dM(s)   
$$
$$
 \le c_2 
 e^{c_1
 \int_0^t
 (1+ \| v(r)\|^2_{l^2}) dr
 }  
 + 
 M(t)
 +c_1 \int_0^t
  (1+ \| v(s)\|^2_{l^2})
 e^{ c_1
 \int_s^t
 (1+ \| v(r)\|^2_{l^2}) dr
 }
 M(s) ds  
$$
$$
 \le c_2 
 e^{c_1
 \int_0^t
 (1+ \| v(r)\|^2_{l^2}) dr
 }  
 + 
 M(t)
 +c_1
 (\sup_{0\le s \le t}
 |M(s)|) \  e^{ c_1
 \int_0^t
 (1+ \| v(r)\|^2_{l^2}) dr
 }
  \int_0^t
  (1+ \| v(s)\|^2_{l^2})
    ds  
$$
$$
 \le c_2 
 e^{c_1 
 (T+ N)
 }  
 + 
 M(t)
 +c_1
 (\sup_{0\le s \le t}
 |M(s)|) \   e^{c_1 
 (T+ N)
 } (T+N)  
 $$
  \be\label{gep 12} 
\le c_3 + c_3(\sup_{0\le s \le t}
 |M(s)|),
 \ee
 where $c_3=c_3(R, T, N)>0$,
 By \eqref{gep 12} we obtain,
  for all $t\in [0,T]$, 
  $$
 \E
 \left (
 \sup_{0\le r\le t}
 \left ( 
 \| u_{v}^\eps(r) \|^2
+2\int_0^r
\| (-\Delta)^{\frac {\alpha}2}  u_{v}^\eps
(s) \|^2 ds
+
2\lambda_1
\int_0^r \| u_{v}^\eps (s) \|^p_{L^p(\R^n)} ds
\right )
\right )
 $$
  \be\label{gep 13} 
\le c_3 + c_3
\E \left (\sup_{0\le r \le t}
 |M(r)|
 \right ). 
 \ee
 By
 \eqref{sig6}  and 
  the Burkholder inequality, we have
 $$
 c_3 \E \left (\sup_{0\le r \le t}
 |M(r)|
 \right )
 = 2  c_3 \E \left (\sup_{0\le r \le t}
 \left | \int_0^r (u_{v}^\eps (s),
 \sigma (s, u_{v}^\eps (s) )  dW)
 \right | 
 \right )
 $$
 $$
 \le
  6 c_3 \E \left ( 
  \left (\int_0^t
  \| u_{v}^\eps (s)\|^2
 \| \sigma (s, u_{v}^\eps (s)  )\|^2_{\call_2(l^2, H)}
 ds 
 \right )^{\frac 12}
 \right )
 $$
  $$
 \le
  6 c_3  \E \left ( 
  \sup_{0\le s \le t}
  \| u_{v}^\eps (s)\| 
  \left (\int_0^t
   \| \sigma (s, u_{v}^\eps (s)  )\|^2_{\call_2(l^2, H)}
 ds 
 \right )^{\frac 12}
 \right )
 $$
 $$
 \le
 {\frac 12}
 \E \left (
  \sup_{0\le s \le t}
  \| u_{v}^\eps (s)\| ^2
 \right )
 +18c_3^2
   \E \left (  
  \int_0^t
   \| \sigma (s, u_{v}^\eps (s)  )\|^2_{\call_2(l^2, H)}
 ds  
 \right )
 $$
 $$
 \le
 {\frac 12}
 \E \left (
  \sup_{0\le s \le t}
  \| u_{v}^\eps (s)\| ^2
 \right )
 +18c_3^2 
   \E \left (  
  \int_0^t
 \left ( L_1 +L_1  \| u_{v}^\eps (s)\| ^2 
  +2\| \sigma_1\|^2
  _{
  C([0,T], L^2(\R^n, l^2 ))
  }
  \right )
 ds  
 \right )
 $$
\be\label{gep 14}
 \le
 {\frac 12}
 \E \left (
  \sup_{0\le s \le t}
  \| u_{v}^\eps (s)\| ^2
 \right )
 +18 c_3^2 T( L_1 + 2 \| \sigma_1\|^2
  _{
  C([0,T], L^2(\R^n, l^2 ))
  } )+ 18c_3^2 L_1  
  \int_0^t  \E(   \| u_{v}^\eps (s)\| ^2 ) 
 ds  .
 \ee
 It follows from
 \eqref{gep 13}-\eqref{gep 14} that
 for all $t\in [0,T]$, 
  $$
  \E
 \left (
 \sup_{0\le r\le t}
 \left ( 
 \| u_{v}^\eps(r) \|^2
+2\int_0^r
\| (-\Delta)^{\frac {\alpha}2}  u_{v}^\eps
(s) \|^2 ds
+
2\lambda_1
\int_0^r \| u_{v}^\eps (s) \|^p_{L^p(\R^n)} ds
\right )
\right )
 $$
$$
 \le
  2c_3  
 +36 c_3^2 T( L_1 + 2 \| \sigma_1\|^2
  _{
  C([0,T], L^2(\R^n, l^2 ))
  } )+ 36c_3^2 L_1  
  \int_0^t  \E( \sup_{0\le r\le s}
    \| u_{v}^\eps (r)\| ^2 ) 
 ds  ,
$$
which along with Gronwall\rq{}s inequality 
concludes the proof. 
 \end{proof}

 We now prove
 the convergence of solutions
 of \eqref{intr1}-\eqref{intr3} as
 $\eps \to 0$.

 \begin{lem}\label{cso}
  Suppose   \eqref{F1}-\eqref{F4}
  and \eqref{sig0}-\eqref{sig3}
  hold, $T>0$    
   and
 $\{v^\eps\}  \subseteq \cala_N$
for some  $N<\infty$.
If $\{v^\eps\} $
converges in distribution
to $v$ as $S_N$-valued random variables,
then
$ 
\calg^\eps
\left (
W +\eps^{-\frac 12}\int_0^\cdot
v^\eps (t) dt
\right )$
converges
to $\calg^0\left (
\int_0^\cdot
v(t) dt
\right )$
in $ C([0,T], H) \bigcap L^2(0,T; V)$
in distribution.
\end{lem}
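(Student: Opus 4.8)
The plan is to verify condition \textbf{(H1)} by factoring the solution of the random controlled equation through the deterministic one. Set $u^{\eps}_{v^\eps}:=\calg^\eps\big(W+\eps^{-\frac12}\int_0^\cdot v^\eps(t)\,dt\big)$, which by Lemma \ref{gep} is the unique solution of \eqref{gep 1} with control $v^\eps$ and initial datum $u_0$. For $P$-almost every $\omega$ one has $v^\eps(\omega)\in S_N$, so Lemma \ref{exis_sol} provides the unique solution $u_{v^\eps}(\omega)$ of the deterministic problem \eqref{contr1}--\eqref{contr2} with control $v^\eps(\omega)$; the pathwise Lipschitz dependence in Lemma \ref{exis_sol} makes $\omega\mapsto u_{v^\eps}(\omega)$ measurable and $\calf_t$-adapted, and by \eqref{calg0}, $u_{v^\eps}=\calg^0\big(\int_0^\cdot v^\eps(t)\,dt\big)$. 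Writing $u^{\eps}_{v^\eps}=z^\eps+u_{v^\eps}$ with $z^\eps:=u^{\eps}_{v^\eps}-u_{v^\eps}$, I would prove (i) that $z^\eps\to0$ in probability in $C([0,T],H)\cap L^2(0,T;V)$, and (ii) that $u_{v^\eps}\to\calg^0\big(\int_0^\cdot v(t)\,dt\big)$ in distribution in the same space. Since convergence in distribution is unaffected by an additive perturbation tending to zero in probability, (i) and (ii) together yield the conclusion.

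For (i), subtracting \eqref{contr1} with control $v^\eps$ from \eqref{gep 1} shows $z^\eps(0)=0$ and
\[
 dz^\eps+(-\Delta)^\alpha z^\eps\,dt+\big(F(t,\cdot,u^{\eps}_{v^\eps})-F(t,\cdot,u_{v^\eps})\big)\,dt
 =\big(\sigma(t,u^{\eps}_{v^\eps})-\sigma(t,u_{v^\eps})\big)v^\eps\,dt+\sqrt{\eps}\,\sigma(t,u^{\eps}_{v^\eps})\,dW.
\]
Applying It\^o's formula to $\|z^\eps(t)\|^2$ as in Proposition \ref{posed1}, I would estimate the drift difference by $-2\int_{\R^n}\big(F(t,\cdot,u^{\eps}_{v^\eps})-F(t,\cdot,u_{v^\eps})\big)z^\eps\,dx\le 2\|\psi_3(t)\|_{L^\infty(\R^n)}\|z^\eps(t)\|^2$ via \eqref{F4}, the noise cross-term by a multiple of $\|v^\eps(t)\|_{l^2}\|z^\eps(t)\|^2$ via \eqref{sig7} and Young's inequality, and the It\^o correction by $\eps L_1(1+\|u^{\eps}_{v^\eps}(t)\|^2)+2\eps\|\sigma_1\|^2_{C([0,T],L^2(\R^n,l^2))}$ via \eqref{sig6}, arriving for all $t\in[0,T]$ at
\[
 \|z^\eps(t)\|^2+2\int_0^t\|(-\Delta)^{\frac\alpha2}z^\eps(s)\|^2\,ds\le\int_0^t h^\eps(s)\|z^\eps(s)\|^2\,ds+R^\eps(t)+M^\eps(t),
\]
where $h^\eps(s)=2\|\psi_3(s)\|_{L^\infty(\R^n)}+c\|v^\eps(s)\|_{l^2}$, $R^\eps(t)=\eps\int_0^t\big(L_1(1+\|u^{\eps}_{v^\eps}(s)\|^2)+2\|\sigma_1\|^2_{C([0,T],L^2(\R^n,l^2))}\big)\,ds$, and $M^\eps(t)=2\sqrt{\eps}\int_0^t\big(z^\eps(s),\sigma(s,u^{\eps}_{v^\eps}(s))\,dW\big)$. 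Since $v^\eps\in\cala_N$ forces $\int_0^T h^\eps(s)\,ds\le 2\int_0^T\|\psi_3(s)\|_{L^\infty(\R^n)}\,ds+c\sqrt{TN}$ deterministically, Gronwall's inequality, the Burkholder--Davis--Gundy inequality applied to $M^\eps$ (with the resulting $\E\sup_{0\le t\le T}\|z^\eps(t)\|^2$-term absorbed into the left side, exactly as in the passage from \eqref{gep 12} to \eqref{gep 14}), and the uniform moment bound $\E\|u^{\eps}_{v^\eps}\|^2_{C([0,T],H)}\le M_3$ from Lemma \ref{gep} will give $\E\big(\|z^\eps\|^2_{C([0,T],H)}+\|z^\eps\|^2_{L^2(0,T;V)}\big)\le C\eps$, whence (i).

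For (ii), Lemma \ref{wc_sol} states precisely that $v\mapsto u_v=\calg^0\big(\int_0^\cdot v(t)\,dt\big)$ is sequentially continuous from $L^2(0,T;l^2)$ with the weak topology into $C([0,T],H)\cap L^2(0,T;V)$; its restriction to $S_N$, which is a compact metrizable space in that topology, is therefore continuous, so the continuous mapping theorem and the hypothesis that $v^\eps\to v$ in distribution give (ii). Combining (i) and (ii) shows that $\calg^\eps\big(W+\eps^{-\frac12}\int_0^\cdot v^\eps(t)\,dt\big)$ converges in distribution to $\calg^0\big(\int_0^\cdot v(t)\,dt\big)$ in $C([0,T],H)\cap L^2(0,T;V)$, which is the assertion. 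I expect step (i) to be the main obstacle: the superlinear drift difference must be handled solely through the one-sided bound \eqref{F4}, and the martingale term through Burkholder while keeping the Gronwall exponent uniform in both $\omega$ and $\eps$; this is exactly where the constraint $\int_0^T\|v^\eps(s)\|^2_{l^2}\,ds\le N$ built into $\cala_N$ and the $\eps$-independent moment estimate of Lemma \ref{gep} are indispensable.
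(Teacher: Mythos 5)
Your proposal is correct and follows the same overall strategy as the paper: decompose $u^\eps_{v^\eps}=z^\eps+u_{v^\eps}$ with $u_{v^\eps}=\calg^0(\int_0^\cdot v^\eps\,dt)$, kill $z^\eps$ using the one-sided bound \eqref{F4} for the superlinear drift, \eqref{sig7} and \eqref{sig6} for the diffusion, Gronwall with the deterministic exponent coming from $\int_0^T\|v^\eps\|_{l^2}\,ds\le\sqrt{TN}$, and BDG for the martingale; then transfer the weak-to-strong continuity of Lemma \ref{wc_sol} to convergence in distribution of $u_{v^\eps}$. The execution differs in two technical respects. First, for step (i) the paper does not prove a global moment bound $\E\|z^\eps\|^2\le C\eps$; it localizes with the stopping time $\tau^\eps_R=\inf\{t:\|u^\eps_{v^\eps}(t)\|\ge R\}\wedge T$ so that the It\^o correction and the BDG estimate are controlled pathwise by $R$, obtains convergence in probability of the stopped quantities, and then removes the localization via the Chebyshev bound $P(\tau^\eps_R<T)\le M_3/R^2$ from \eqref{gep 2}. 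Your direct expectation argument also works, precisely because the uniform moment estimate \eqref{gep 2} of Lemma \ref{gep} bounds $\E\int_0^T\|\sigma(s,u^\eps_{v^\eps}(s))\|^2_{\call_2(l^2,H)}\,ds$ independently of $\eps$, and the a priori finiteness of $\E\sup_t\|z^\eps(t)\|^2$ (from Lemma \ref{gep} together with the deterministic bound \eqref{exis_sol 2}) legitimizes absorbing the $\tfrac12\E\sup\|z^\eps\|^2$ term after BDG; it yields the slightly stronger conclusion $\E(\|z^\eps\|^2_{C([0,T],H)}+\|z^\eps\|^2_{L^2(0,T;V)})\le C\eps$ and avoids the two-parameter limit ($\eps\to0$ then $R\to\infty$). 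Second, for step (ii) the paper passes through Skorokhod's representation theorem and applies Lemma \ref{wc_sol} pathwise to the a.s.\ convergent copies, whereas you invoke the continuous mapping theorem after observing that sequential weak-to-strong continuity on the compact metrizable space $S_N$ is genuine continuity; the two are equivalent here and both are valid.
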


\begin{proof}
Let 
 $u_{v^\eps}^\eps 
 =
 \calg^\eps
\left (
W +\eps^{-\frac 12}\int_0^\cdot
v^\eps (t) dt
\right )$. 
By Lemma 
\ref{gep}, 
$u_{v^\eps}^\eps $  
satisfies 
\be\label{cso 1}
  d u_{v^\eps}^\eps
  + (-\Delta)^ \alpha   u_{v^\eps}^\eps   dt
  + F(t, \cdot,  u_{v^\eps}^\eps ) dt
  =  
    g(t)   dt
    +  \sigma (t,    u_{v^\eps}^\eps  )    v ^\eps dt
  +\sqrt{\eps} \sigma (t,    u_{v^\eps}^\eps  )    {dW} ,
   \ee  
 with  
 $
  u_{v^\eps}^\eps  (0)=u_0\in H. $ 
 Let $u_v=
  \calg^0\left (
\int_0^\cdot
v(t) dt
\right )   $. Then
  $u_v$
is the solution 
 to \eqref{contr1}-\eqref{contr2}.
 We will show that 
$u^\eps_{v^\eps} $ converges to $u_v$
in $ C([0,T], H) \bigcap L^2(0,T; V)$
in distribution as $\eps \to 0$, 
for which 
we  first  deal with   the convergence
of 
$u^\eps_{v^\eps} - u_{v^\eps}$
with
$u_{v^\eps}=  \calg^0\left (
\int_0^\cdot
v^\eps (t) dt \right ) $.
Note that  $u_{v^\eps}$ satisfies 
 \be\label{cso 2}
{\frac {d u_ {v^\eps}   }{dt}}
+  (-\Delta)^\alpha u_ {v^\eps}   
+F(t, \cdot, u_ {v^\eps}   )=  
  g (t)  
 +   \sigma(t, u_ {v^\eps}  )  
     {v^\eps}   ,
\ee
 with  $
 u_ {v^\eps}  (0)=u_0  $. 
  By \eqref{cso 1}-\eqref{cso 2} we have
   $$
 d  ( u_{v^\eps}^\eps -  u_{v^\eps}  )
 +     (-\Delta)^ \alpha ( u^\eps_{v^\eps}  -  u_{v^\eps  }
 ) dt 
 +
   ( F(t,\cdot, u^\eps_{v^\eps}  ) -  
   F(t, \cdot, u_{v^\eps}  ) ) dt   
$$
  \be\label{cso 3}
  =
\left ( 
 \sigma(t, u^\eps_{v^\eps}  )  v^\eps
 -
  \sigma(t, u_{v^\eps}  )  v^\eps
\right )  dt
 + \sqrt{\eps} \sigma(t, u^\eps_{v^\eps}  )  
   dW  ,
\ee
with  
$u_{v^\eps}^\eps(0) -  u_{v^\eps} (0)=0$.

By \eqref{cso 3} and It\^{o}\rq{}s formula we obtain
$$
 \|  u_{v^\eps}^\eps(t) -  u_{v^\eps}(t) \|^2
+2 
\int_0^t
\|  (-\Delta)^{\frac {\alpha}2}
( u^\eps_{v^\eps}(s)  -  u_{v^\eps} (s)
 ) \|^2  ds
 $$
 $$
 =- 2
 \int_0^t\int_{\R^n}
  ( F(s,x, u^\eps_{v^\eps}  ) -  F(s,
  x, u_{v^\eps})  )
 (u^\eps_{v^\eps}  -  u_{v^\eps}  ) dx ds
  $$
  $$ 
 +
  2 
  \int_0^t
  \left ( 
 \sigma(s, u^\eps_{v^\eps} (s) )  v^\eps(s)
 -
  \sigma(s, u_{v^\eps} (s)  )  v^\eps (s),
  \  u^\eps_{v^\eps}(s)  -  u_{v^\eps}(s)
\right )  ds
$$
   \be\label{cso 4}
+ \eps
\int_0^t
 \|   \sigma(s, u^\eps_{v^\eps} (s)  )\|
_{\call_2(l^2, H)}^2  ds
+ 
2\sqrt{\eps} \int_0^t
\left (
u^\eps_{v^\eps}(s)  -  u_{v^\eps}(s),\
 \sigma(s, u^\eps_{v^\eps} (s)  ) 
   dW  \right ) .
\ee
 By \eqref{F4} we have
$$
 -  2
 \int_0^t\int_{\R^n}
  ( F(s,x, u^\eps_{v^\eps}  ) -  F(s,
  x, u_{v^\eps})  )
 (u^\eps_{v^\eps}  -  u_{v^\eps}  ) dx ds
 $$
    \be\label{cso 5}
   \le 2\| \psi_3  \|_{
   L^\infty(0,T; L^\infty (\R^n) )}
  \int_0^t \| u^\eps_{v^\eps} 
  (s)  -  u_{v^\eps} (s) \|^2ds.
  \ee
  By \eqref{sig7} we have
$$  
  2 
  \int_0^t
  \left ( 
 \sigma(s, u^\eps_{v^\eps} (s) )  v^\eps(s)
 -
  \sigma(s, u_{v^\eps} (s)  )  v^\eps (s),
  \  u^\eps_{v^\eps}(s)  -  u_{v^\eps}(s)
\right )  ds
$$
$$  
 \le  2 
  \int_0^t
  \|
 \sigma(s, u^\eps_{v^\eps} (s) )  
 -
  \sigma(s, u_{v^\eps} (s)  )
  \|_{\call_2(l^2, H)}
  \| v^\eps (s) \|_{l^2} \|
  u^\eps_{v^\eps}(s)  -  u_{v^\eps}(s)
  \|   ds
$$
   \be\label{cso 6}
  \le
  2 \|\kappa\| _{L^\infty(\R^n) }
  \left ( \sum_{k=1}^\infty
  \alpha_k^2
  \right )^{\frac 12}
  \int_0^t 
  \| v^\eps (s) \|_{l^2} \|
  u^\eps_{v^\eps}(s)  -  u_{v^\eps}(s)
  \|^2   ds.
  \ee

It follows from \eqref{cso 4}-\eqref{cso 6} that
for all $t\in [0,T]$,
$$
 \|  u_{v^\eps}^\eps(t) -  u_{v^\eps}(t) \|^2
+2 
\int_0^t
\|  (-\Delta)^{\frac {\alpha}2}
( u^\eps_{v^\eps}(s)  -  u_{v^\eps} (s)
 ) \|^2  ds
 $$
 $$
\le
 c_1 \int_0^t (1+ \| v^\eps (s)\|_{l^2})
 \|  u^\eps_{v^\eps}(s)  -  u_{v^\eps}(s) \|^2
 ds
 $$
   \be\label{cso 7}
+ \eps
\int_0^t
 \|   \sigma(s, u^\eps_{v^\eps} (s)  )\|
_{\call_2(l^2, H)}^2  ds
+ 
2\sqrt{\eps}  
\int_0^t
\left (
u^\eps_{v^\eps}(s)  -  u_{v^\eps}(s),\
 \sigma(s, u^\eps_{v^\eps} (s)  ) 
   dW  \right ) ,
\ee
 where 
 $c_1
 =2\| \psi_3  \|_{
   L^\infty(0,T; L^\infty (\R^n) )}
   + 
 2 \|\kappa\| _{L^\infty(\R^n) }
  \left ( \sum_{k=1}^\infty
  \alpha_k^2
  \right )^{\frac 12}$.

  Given $R>0$  and $\eps\in (0,1)$, define 
$$
 \tau^\eps_R 
 =\inf \left \{
 t\ge 0: \| u^\eps_{v^\eps} (t) \| \ge R
 \right \}\wedge T.
$$
By \eqref{cso 7} we have 
for all $t\in [0,T]$,
 $$ \sup_{0\le r \le t}
 \left (
 \|  u_{v^\eps}^\eps(r \wedge \tau^\eps_R ) 
 -  u_{v^\eps}( r \wedge \tau^\eps_R) \|^2
+2 
\int_0^{r \wedge \tau^\eps_R}
\|  (-\Delta)^{\frac {\alpha}2}
( u^\eps_{v^\eps}(s)  -  u_{v^\eps} (s)
 ) \|^2  ds
 \right )
 $$
 $$
\le
 c_1 \int_0^{t \wedge \tau^\eps_R} (1+ \| v^\eps (s)\|_{l^2})
 \|  u^\eps_{v^\eps}(s)  -  u_{v^\eps}(s) \|^2
 ds
 $$
$$
+ \eps
\int_0^{t \wedge \tau^\eps_R}
 \|   \sigma(s, u^\eps_{v^\eps} (s)  )\|
_{\call_2(l^2, H)}^2  ds
+ 
2\sqrt{\eps}
\sup_{0\le r \le t}
 \left|
\int_0^{r \wedge \tau^\eps_R}
\left (
u^\eps_{v^\eps}(s)  -  u_{v^\eps}(s),\
 \sigma(s, u^\eps_{v^\eps} (s)  ) 
   dW  \right ) 
   \right |
   $$
   $$
\le
 c_1 \int_0^{t  } (1+ \| v^\eps (s)\|_{l^2})
 \sup_{0\le r \le s}
 \|  u^\eps_{v^\eps}(r\wedge \tau^\eps_R) 
  -  u_{v^\eps}(r\wedge \tau^\eps_R) \|^2
 ds
 $$
   \be\label{cso 8}
+ \eps
\int_0^{T\wedge \tau^\eps_R}
 \|   \sigma(s, u^\eps_{v^\eps} (s)  )\|
_{\call_2(l^2, H)}^2  ds
+ 
2\sqrt{\eps}
\sup_{0\le r \le T}
 \left|
\int_0^{r \wedge \tau^\eps_R}
\left (
u^\eps_{v^\eps}(s)  -  u_{v^\eps}(s),\
 \sigma(s, u^\eps_{v^\eps} (s)  ) 
   dW  \right ) 
   \right |.
 \ee
   
     By \eqref{cso 8} and Gronwall\rq{}s inequality,
  we get
 for all $t\in [0,T]$, $P$-almost surely,
 $$ \sup_{0\le r \le t}
 \left (
 \|  u_{v^\eps}^\eps(r \wedge \tau^\eps_R ) 
 -  u_{v^\eps}( r \wedge \tau^\eps_R) \|^2
+2 
\int_0^{r \wedge \tau^\eps_R}
\|  (-\Delta)^{\frac {\alpha}2}
( u^\eps_{v^\eps}(s)  -  u_{v^\eps} (s)
 ) \|^2  ds
 \right )
 $$
 \be\label{cso 9}
+ \eps c_2
\int_0^{T\wedge \tau^\eps_R}
 \|   \sigma(s, u^\eps_{v^\eps} (s)  )\|
_{\call_2(l^2, H)}^2  ds
+ 
2\sqrt{\eps}c_2
\sup_{0\le r \le T}
 \left|
\int_0^{r \wedge \tau^\eps_R}
\left (
u^\eps_{v^\eps}(s)  -  u_{v^\eps}(s),\
 \sigma(s, u^\eps_{v^\eps} (s)  ) 
   dW  \right ) 
   \right |,
 \ee
 where 
$ 
c_2= 
e^{c_1
(T+   T^{\frac 12} N^{\frac 12}  ) 
 } $.

For the first term on the right-hand side of 
  \eqref{cso 9},   
  by \eqref{sig6}  we get 
  $$
\int_0^ { T \wedge \tau^\eps_R }
 \|   \sigma(s, u^\eps_{v^\eps} (s)  )\|
_{\call_2(l^2, H)}^2  ds
\le
\int_0^ { T \wedge \tau^\eps_R }
( L_1 +L_1
\| u^\eps_{v^\eps} (s) \|^2
+ 2\| \sigma_1\|^2_{C([0,T], L^2(\R^n,l^2))}
)
  ds 
$$ 
 \be\label{cso 9a}
  \le  T
 \left ( L_1 +L_1R^2
+ 2\| \sigma_1\|^2_{C([0,T], L^2(\R^n,l^2))}
\right ),
\ee
and hence
 \be\label{cso 10}
  \lim_{\eps \to 0}
  \eps c_2
\int_0^ { T \wedge \tau^\eps_R }
 \|   \sigma(s, u^\eps_{v^\eps} (s)  )\|
_{\call_2(l^2, H)}^2  ds
    =0,\quad \text{P-almost surely}.
\ee
Note that  \eqref{exis_sol 2}  implies  that there
exists $c_3=c_3(T, N)>0$ such that
\be
\label{cso 11}
\sup_{\eps\in (0,1)}\sup_{t\in [0,T]}
\| u_{v^\eps} (t) \| \le c_3.
\ee
 For the last term on the right-hand side of
\eqref{cso 9},
by \eqref{cso 9a},
 \eqref{cso 11} 
and
Doob\rq{}s maximal inequality we  obtain
$$
\E\left (
 \sup_{0\le r \le T}
 \left|
\int_0^{r \wedge \tau^\eps_R} 2\sqrt{\eps}c_2
\left (
u^\eps_{v^\eps}(s)  -  u_{v^\eps}(s),\
 \sigma(s, u^\eps_{v^\eps} (s)  ) 
   dW  \right ) 
   \right |^2
   \right )
$$
$$
\le 16 \eps c_2^2
\E\left ( 
  \int_0^ { T\wedge \tau^\eps_R}
 \|
u^\eps_{v^\eps}(s)  -  u_{v^\eps}(s)\|^2
 \| \sigma(s, u^\eps_{v^\eps} (s)   )\|^2_
 {\call_2(l^2, H )}
 ds 
   \right )
$$
$$
\le 16 \eps c_2^2
(R+c_3)^2
\E\left ( 
  \int_0^ { T\wedge \tau^\eps_R}
 \| \sigma(s, u^\eps_{v^\eps} (s)   )\|^2_{\call_2(
 l^2,H )}
 ds 
   \right )
$$
  \be\label{cso 12}
\le 16\eps  c_2^2 
(R+c_3)^2 
 T
 \left ( L_1 +L_1R^2
+ 2\| \sigma_1\|^2_{C([0,T], L^2(\R^n,l^2))}
\right )
\to 0 \ \text{ as } \ \eps \to 0.
\ee

By \eqref{cso 9},
\eqref{cso 10} and
\eqref{cso 12}, we get 
   \be\label{cso 13}
 \lim_{\eps \to 0}
\  \sup_{0\le t \le T}
  \|  u_{v^\eps}^\eps(t \wedge \tau^\eps_R ) 
 -  u_{v^\eps}( t\wedge \tau^\eps_R) \|^2
   =0
  \quad \text{in probability},
  \ee
  and
    \be\label{cso 14}
 \lim_{\eps \to 0}
  \int_0^{T \wedge \tau^\eps_R}
\|   
( u^\eps_{v^\eps}(s)  -  u_{v^\eps} (s)
 ) \|^2_V  ds 
  =0
  \quad \text{in probability}.
\ee

 On the other hand, by
   \eqref{gep 2}  we have
 for all $\eps \in (0,1)$,
   \be\label{cso 15}
 P(\tau^\eps_R <T)
\le 
 P \left (
 \sup_{t\in [0,T]}
 \| u^\eps_{v^\eps} (t) \|  \ge R   
 \right )
 \le
 {\frac 1{R^2}}
 \E \left (
 \sup_{t\in [0,T]}
 \| u^\eps_{v^\eps} (t) \|^2 
 \right )
  \le
 {\frac {M_3}{R^2}},
\ee
 where $M_3=M_3(T, N)>0$.
 It follows  from
 \eqref{cso 15}
 that
 for every $\eta>0$,
 $$
 P 
\left (
\sup_{0\le t \le T}
 \|  u_{v^\eps}^\eps(t )
  -  u_{v^\eps}( t ) \| >\eta
\right )
$$
$$
 \le P 
\left (
\sup_{0\le t \le T}
 \|  u_{v^\eps}^\eps(t )
  -  u_{v^\eps}( t ) \| >\eta,
  \  \tau^\eps_R =T
\right )
+
  P 
\left (
\sup_{0\le t \le T}
 \|  u_{v^\eps}^\eps(t )
  -  u_{v^\eps}( t ) \| >\eta,
  \  \tau^\eps_R <T
\right )
$$
$$
 \le P 
\left (
\sup_{0\le t \le T}
 \|  u_{v^\eps}^\eps(t \wedge  \tau^\eps_R  )
  -  u_{v^\eps}( t \wedge  \tau^\eps_R) \| >\eta  
\right )
+
   {\frac {M_3}{R^2}}.
$$
First letting $\eps \to 0$  and then
 $R\to \infty$, we get
from  \eqref{cso 13} that 
 \be\label{cso 16}
 \lim_{\eps \to 0} 
 ( u_{v^\eps}^\eps
  -   u_{v^\eps}  ) =0 
  \ \text{ in  } {C([0,T],H)}
\  \text{ in probability}.
\ee

Similarly, by   \eqref{cso 15} we have,
for every $\eta>0$,
 $$
 P 
\left (
 \int_0^T
 \|  u_{v^\eps}^\eps(t )
  -  u_{v^\eps}( t ) \|^2_V dt >\eta^2
\right )
$$
$$
 \le P 
\left (
 \int_0^T
 \|  u_{v^\eps}^\eps(t )
  -  u_{v^\eps}( t ) \|^2_V dt >\eta^2,
  \  \tau^\eps_R =T
\right )
+
  P 
\left (
 \int_0^T
 \|  u_{v^\eps}^\eps(t )
  -  u_{v^\eps}( t ) \|^2_V dt >\eta^2,
  \  \tau^\eps_R <T
\right )
$$
$$
 \le P 
\left (
 \int_0^{T \wedge \tau_R^\eps}
 \|  u_{v^\eps}^\eps(t )
  -  u_{v^\eps}( t ) \|^2_V dt >\eta^2 
\right )
+
   {\frac {M_3}{R^2}}.
$$
First letting $\eps \to 0$  and then
 $R\to \infty$, we get
from  \eqref{cso 14} that 
 \be\label{cso 17}
 \lim_{\eps \to 0} 
 ( u_{v^\eps}^\eps
  -   u_{v^\eps}  ) =0 
  \ \text{ in  } { L^2(0,T; V) }
\  \text{ in probability}.
\ee
It follows from \eqref{cso 16}
and \eqref{cso 17} that
\be\label{cso 18}
 \lim_{\eps \to 0} 
 ( u_{v^\eps}^\eps
  -   u_{v^\eps}  ) =0 
  \ \text{ in  } { C([0,T], H) \bigcap L^2(0,T; V) }
\  \text{ in probability}.
\ee

By assumption, we know
  $\{v^\eps\}$ converges
in distribution to $v$ as 
$S_N$-valued random  elements,
and hence 
by Skorokhod\rq{}s theorem,
there exists a probability
space $(\widetilde{\Omega},
\widetilde{\calf}, \widetilde{P})$
and 
$S_N$-valued random variables
$\widetilde{v}^\eps$
and $\widetilde{v}$ on
$(\widetilde{\Omega},
\widetilde{\calf}, \widetilde{P})$
such that the laws of 
$  \widetilde{v}^\eps$ and $
 \widetilde{v} $ are equal to that
 of 
 $  {v^\eps}$ and $
  {v} $ respectively.
  Furthermore, 
  $ \{\widetilde{v}^\eps\}$
 converges to
 $  \widetilde{v} $ 
  almost surely
  in $S_N$.
 It follows from 
  Lemma \ref{wc_sol} that 
  $$
  u_{\widetilde{v} ^\eps}
  \to u_{\widetilde{v}}
  \ \text{in } C([0,T], H) \bigcap L^2(0,T; V)
  \ \text{almost surely},
 $$
and hence it is also convergent  
  in distribution, which implies that
\be\label{cso 19}
  u_{ {v} ^\eps}
  \to u_{ {v}}
  \ \text{in }  C([0,T], H) \bigcap L^2(0,T; V) 
  \ \text{in distribution}.
\ee
By
  \eqref{cso 18}-\eqref{cso 19}  we infer that
$$
  u^\eps_{ {v} ^\eps}
  \to u_{ {v}}
  \ \text{ in }  C([0,T], H) \bigcap L^2(0,T; V) 
  \ \text{in distribution},
$$
as desired. 
 \end{proof}

{\bf Proof of Theorem \ref{main}}.
     Based on Lemmas  \ref{clev}  and \ref{cso},
     we see that Theorem \ref{main} follows from 
  Proposition \ref{LP1}
 immediately.

Under further  conditions on the 
nonlinear drift term,  we will improve Theorem
\ref{main} in the next section, and show the
LDP of 
 \eqref{intr1}-\eqref{intr3}
 in the space
 $C([0,T], H) \bigcap L^2(0,T;V)
 \bigcap L^p(0,T; L^p(\R^n))$.

 \section{Large deviation principles
 under strong dissipativeness  }
  \setcounter{equation}{0}
 
This section is devoted to the LDP
of \eqref{intr1}-\eqref{intr3}
 in  
 $C([0,T], H) \bigcap L^2(0,T;V)
 \bigcap L^p(0,T; L^p(\R^n))$,
 for which we further assume that
  the nonlinearity $F$ satisfies the   strong
  dissipativeness condition:
 for every $t, u_1, u_2\in \R$   and $x\in \R^n$,
 \be\label{Fa}
 \left (
 F(t,x, u_1)-
 F(t,x, u_2)
 \right ) (u_1-u_2)
 \ge \lambda_4 |u_1 -u_2|^p
 -\psi_5 (t,x) |u_1-u_2|^2,
 \ee
 where
 $\lambda_4>0$ is a constant
 and $\psi_5\in L^\infty_{loc} (\R, L^\infty(\R^n))$.

The proof of the LDP of 
\eqref{intr1}-\eqref{intr3}
in the space
 $C([0,T], H) \bigcap L^2(0,T;V)
 \bigcap L^p(0,T; L^p(\R^n))$
 is similar to that
 in the space
  $C([0,T], H) \bigcap L^2(0,T;V) $.
  However, we must improve the convergence
  of  the solutions of the control equation
  \eqref{contr1} 
  as well as the stochastic equation \eqref{gep 1}
  from the space
   $C([0,T], H) \bigcap L^2(0,T;V) $
  to $C([0,T], H) \bigcap L^2(0,T;V) 
  \bigcap L^p(0,T; L^p(\R^n))$.
  
  The following lemma  is an improvement of Lemma
  \ref{wc_sol}.

 \begin{lem}\label{wc_sola}
 Suppose   \eqref{F1}-\eqref{F4},
   \eqref{sig0}-\eqref{sig3} 
   and \eqref{Fa}  hold.
 Let
  $v, v_n \in L^2(0,T; l^2)$ 
  for all  $n\in \N$
  and  $u_v$,  $u_{v_n}$
  be the solutions of 
  \eqref{contr1}-\eqref{contr2}
  corresponding to $v$  and $v_n$,
  respectively.
    If   $v_n \to v$ weakly in 
  $L^2(0,T; l^2)$,
  then
  $ u_{v_n} \to u_v $ strongly
  in  
 $  C([0,T], H)\bigcap L^2(0,T;V)
  \bigcap L^p(0,T; L^p(\R^n))
 $. 
   \end{lem}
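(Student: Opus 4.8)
The plan is to obtain the additional $L^p(0,T;L^p(\R^n))$‑convergence directly from Lemma \ref{wc_sol} and the new coercivity hypothesis \eqref{Fa}, without repeating any of the compactness arguments. By Lemma \ref{wc_sol} we already have $u_{v_n}\to u_v$ strongly in $C([0,T],H)\bigcap L^2(0,T;V)$, so it suffices to show $u_{v_n}\to u_v$ in $L^p(0,T;L^p(\R^n))$. I would set $w_n:=u_{v_n}-u_v$, which satisfies $w_n(0)=0$; subtracting the two copies of \eqref{contr1}-\eqref{contr2} and applying the same energy identity used for the difference in Step 4 of the proof of Lemma \ref{wc_sol} (legitimate in the Gelfand‑triple framework $V\bigcap L^p(\R^n)\hookrightarrow H\hookrightarrow (V\bigcap L^p(\R^n))^*$ already exploited in Proposition \ref{posed1} and Lemmas \ref{exis_sol}–\ref{wc_sol}, since $w_n\in L^2(0,T;V)\bigcap L^p(0,T;L^p(\R^n))$ and $w_n'\in L^2(0,T;(V\bigcap L^p(\R^n))^*)$), one gets, for a.e. $t\in(0,T)$,
$$
\frac{d}{dt}\|w_n(t)\|^2+2\|(-\Delta)^{\frac{\alpha}{2}}w_n(t)\|^2
=-2\int_{\R^n}\big(F(t,x,u_{v_n}(t))-F(t,x,u_v(t))\big)w_n(t)\,dx
+2\big(\sigma(t,u_{v_n}(t))v_n(t)-\sigma(t,u_v(t))v(t),\,w_n(t)\big).
$$

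The decisive new point is that \eqref{Fa} furnishes the lower bound
$$
2\int_{\R^n}\big(F(t,x,u_{v_n}(t))-F(t,x,u_v(t))\big)w_n(t)\,dx
\ge 2\lambda_4\|w_n(t)\|^p_{L^p(\R^n)}-2\|\psi_5(t)\|_{L^\infty(\R^n)}\|w_n(t)\|^2,
$$
so that, after integrating over $(0,T)$ and using $w_n(0)=0$ and discarding the nonnegative $V$‑term,
$$
2\lambda_4\int_0^T\|w_n(t)\|^p_{L^p(\R^n)}\,dt
\le 2\|\psi_5\|_{L^\infty(0,T;L^\infty(\R^n))}\|w_n\|^2_{L^2(0,T;H)}
+2\int_0^T\big|\big(\sigma(t,u_{v_n}(t))v_n(t)-\sigma(t,u_v(t))v(t),\,w_n(t)\big)\big|\,dt.
$$
It then remains to send both terms on the right to $0$. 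The first vanishes because $w_n\to0$ in $L^2(0,T;H)$ by Lemma \ref{wc_sol}. For the second I would split $\sigma(t,u_{v_n})v_n-\sigma(t,u_v)v=(\sigma(t,u_{v_n})-\sigma(t,u_v))v_n+\sigma(t,u_v)(v_n-v)$; by \eqref{sig7} the first piece contributes at most $c\,\|w_n\|^2_{C([0,T],H)}\,T^{1/2}\|v_n\|_{L^2(0,T;l^2)}$, and by \eqref{sig6} together with the uniform bound $\|u_v(t)\|^2\le M_2$ from Lemma \ref{exis_sol} the second piece contributes at most $c\,\|w_n\|_{C([0,T],H)}\,T^{1/2}\|v_n-v\|_{L^2(0,T;l^2)}$; both tend to $0$ because $\|w_n\|_{C([0,T],H)}\to0$ while $\{v_n\}$ and $\{v_n-v\}$ are bounded in $L^2(0,T;l^2)$ (weak convergence implies boundedness). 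Hence $\int_0^T\|w_n(t)\|^p_{L^p(\R^n)}\,dt\to0$, which combined with Lemma \ref{wc_sol} gives the claimed convergence in $C([0,T],H)\bigcap L^2(0,T;V)\bigcap L^p(0,T;L^p(\R^n))$.

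I do not anticipate a serious obstacle here: the proof is just an energy estimate for the difference, and the only genuinely new ingredient beyond Lemma \ref{wc_sol} is the coercive bound \eqref{Fa}, which by design produces precisely the $\|w_n\|^p_{L^p(\R^n)}$ term needed to control the $L^p$‑norm of the difference. The single point deserving a little care is the justification of the chain rule for $\|w_n(t)\|^2$ in the triple $V\bigcap L^p(\R^n)\hookrightarrow H\hookrightarrow (V\bigcap L^p(\R^n))^*$ and the finiteness of the duality pairings $\int_0^T(F(t,\cdot,u_{v_n})-F(t,\cdot,u_v),w_n)\,dt$ (which follows from $F(\cdot,\cdot,u_{v_n}),F(\cdot,\cdot,u_v)\in L^q(0,T;L^q(\R^n))$, as noted after Definition \ref{defnsol}, and $w_n\in L^p(0,T;L^p(\R^n))$ via Hölder); but this is exactly the setting already in force for the energy identities in Proposition \ref{posed1} and Lemmas \ref{exis_sol}–\ref{wc_sol}, so it carries over verbatim.
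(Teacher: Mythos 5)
Your proposal is correct and follows essentially the same route as the paper: both derive the energy identity for the difference $u_{v_n}-u_v$, use the strong dissipativeness condition \eqref{Fa} to produce the term $2\lambda_4\|u_{v_n}-u_v\|_{L^p(\R^n)}^p$ on the left, and then drive the right-hand side to zero using the convergence already established in Lemma \ref{wc_sol}. The only (immaterial) difference is that you split the control term via \eqref{sig7} and \eqref{sig6} separately, whereas the paper bounds both $\sigma$ factors uniformly as in its estimate \eqref{wc_sol 36} and invokes the argument of \eqref{wc_sol 38}.
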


   \begin{proof}
       By \eqref{Fa} we have
  $$
    -
     2\int_{\R^n}
      (F(t,x,  u_{v_n} (t)) -F(t,x,  u_v(t))) 
       ( u_{v_n}   (t) - u_v  (t) ) dx
       $$
      \be\label{wc_sola 1}
\le
-2\lambda_4  
\| u_{v_n}   (t) - u_v  (t)\|_{L^p(\R^n)}^p
+
 2\|\psi_5(t) \|_{L^\infty(\R^n)} \|u_{v_n}   (t) - u_v  (t) \|^2.
\ee
    By  \eqref{wc_sol 31},
    \eqref{wc_sol 36} 
    and \eqref{wc_sola 1} we get
         for all $t\in (0, T)$,
      $$
     {\frac d{dt}}
     \|  u_{v_n}   (t) - u_v  (t)\|^2
     +2  \| (-\Delta)^{\frac {\alpha}2}
     ( u_{v_n}   (t) - u_v  (t) )\|^2
     + 2\lambda_4  
\| u_{v_n}   (t) - u_v  (t)\|_{L^p(\R^n)}^p
   $$  
      \be\label{wc_sola 2}
    \le    
2\|\psi_3(t) \|_{L^\infty(\R^n)}
 \|u_{v_n}   (t) - u_v  (t) \|^2
     +
   c_7
     \left (\| v_n(t)\|_{l^2}
     +  \|v (t) \|_{l^2} \right ) 
     \|   u_{v_n}   (t) - u_v  (t)\|.
   \ee
   Then the desired result follows from
   \eqref{wc_sola 2} and the argument of
   \eqref{wc_sol 38}. 
   \end{proof}

 \begin{lem}\label{clev1}
 Suppose   \eqref{F1}-\eqref{F4},
  \eqref{sig0}-\eqref{sig3} 
  and \eqref{Fa} hold.
Then for every $N<\infty$, the set
$$
K_N
=\left \{
\calg^0
\left (
\int_0^\cdot v(t) dt
\right ) : \  v\in S_N
\right \}
$$
is a compact subset
of $ C([0,T], H)
  \bigcap L^2(0,T; V)
    \bigcap L^p(0,T; L^p(\R^n))
  $.
 \end{lem}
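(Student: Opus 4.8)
The plan is to mirror the short argument used for Lemma~\ref{clev}, substituting the strengthened weak-continuity result Lemma~\ref{wc_sola} (which now also delivers strong convergence in $L^p(0,T;L^p(\R^n))$ under the additional dissipativeness hypothesis \eqref{Fa}) for Lemma~\ref{wc_sol}. By \eqref{calg0} the set in question is $K_N=\{u_v:\ v\in S_N\}$, where $u_v$ is the solution of \eqref{contr1}-\eqref{contr2}; since $C([0,T],H)\bigcap L^2(0,T;V)\bigcap L^p(0,T;L^p(\R^n))$ is a metric space, it suffices to establish the sequential compactness of $K_N$ in that topology.

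First I would take an arbitrary sequence $\{u_{v_n}\}_{n=1}^\infty$ in $K_N$ with $v_n\in S_N$ for all $n$. Because $S_N$ is a bounded, closed and convex subset of the Hilbert space $L^2(0,T;l^2)$, it is weakly sequentially compact; hence there exist $v\in S_N$ and a subsequence $\{v_{n_k}\}$ with $v_{n_k}\to v$ weakly in $L^2(0,T;l^2)$. Applying Lemma~\ref{wc_sola} to this weakly convergent sequence then yields $u_{v_{n_k}}\to u_v$ strongly in $C([0,T],H)\bigcap L^2(0,T;V)\bigcap L^p(0,T;L^p(\R^n))$. Since $v\in S_N$, the limit $u_v=\calg^0\!\left(\int_0^\cdot v(t)\,dt\right)$ lies in $K_N$, so every sequence in $K_N$ has a subsequence converging, in the required topology, to a point of $K_N$. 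This proves that $K_N$ is a compact subset of $C([0,T],H)\bigcap L^2(0,T;V)\bigcap L^p(0,T;L^p(\R^n))$.

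There is no genuine obstacle remaining at this level: all of the analytic content has been absorbed into Lemma~\ref{wc_sola}, whose proof in turn rests on the uniform tail-ends estimate of Lemma~\ref{tail} (to circumvent the non-compactness of $H^\alpha(\R^n)\hookrightarrow L^2(\R^n)$ on the unbounded domain) together with the extra $L^p$-coercivity supplied by \eqref{Fa}. The only routine point worth stating explicitly is the weak sequential compactness of $S_N$ in $L^2(0,T;l^2)$ endowed with the weak topology, which is standard since bounded subsets of a separable Hilbert space are weakly metrizable and weakly sequentially compact; consequently the extraction of $v_{n_k}$ and the identification of the limit $v\in S_N$ are automatic.
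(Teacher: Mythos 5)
Your proposal is correct and follows the paper's own argument exactly: the paper proves Lemma \ref{clev1} by repeating the proof of Lemma \ref{clev} (extract a weakly convergent subsequence $v_{n_k}\rightharpoonup v$ in $S_N$, then apply the weak-to-strong continuity lemma) with Lemma \ref{wc_sola} in place of Lemma \ref{wc_sol}. The extra remarks you include on the weak sequential compactness of $S_N$ are routine and consistent with what the paper leaves implicit.
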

 
 \begin{proof}
 The proof is similar to that of Lemma \ref{clev}
 by applying Lemma \ref{wc_sola}.
 \end{proof}

   The next  convergence   is an improvement of Lemma
  \ref{cso}.

 \begin{lem}\label{cso1}
  Suppose   \eqref{F1}-\eqref{F4},
  \eqref{sig0}-\eqref{sig3} and \eqref{Fa}
  hold, $T>0$    
   and
 $\{v^\eps\}  \subseteq \cala_N$
with $N>0$.
If $\{v^\eps\} $
converges in distribution
to $v$ as $S_N$-valued random variables,
then
$ 
\calg^\eps
\left (
W +\eps^{-\frac 12}\int_0^\cdot
v^\eps (t) dt
\right )$
converges
to $\calg^0\left (
\int_0^\cdot
v(t) dt
\right )$
in $ C([0,T], H) \bigcap L^2(0,T; V)
\bigcap L^p(0,T; L^p(\R^n))
$
in distribution.
\end{lem}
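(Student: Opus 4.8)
The plan is to follow the proof of Lemma \ref{cso} essentially line by line, the only changes being: (i) use the strong dissipativeness assumption \eqref{Fa} in place of \eqref{F4} wherever the nonlinear term $F$ is estimated, and (ii) invoke the improved weak-continuity result Lemma \ref{wc_sola} in place of Lemma \ref{wc_sol}. Concretely, I would set $u^\eps_{v^\eps}=\calg^\eps\big(W+\eps^{-\frac12}\int_0^\cdot v^\eps(t)\,dt\big)$ and $u_{v^\eps}=\calg^0\big(\int_0^\cdot v^\eps(t)\,dt\big)$, write down the difference equation \eqref{cso 3}, and apply It\^o's formula to $\|u^\eps_{v^\eps}(t)-u_{v^\eps}(t)\|^2$ exactly as in \eqref{cso 4}.

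The single new ingredient is the estimate of the nonlinear term: instead of \eqref{cso 5}, which relied on \eqref{F4}, I would use \eqref{Fa} to bound it by $-2\lambda_4\int_0^t\|u^\eps_{v^\eps}(s)-u_{v^\eps}(s)\|_{L^p(\R^n)}^p\,ds+2\|\psi_5\|_{L^\infty(0,T;L^\infty(\R^n))}\int_0^t\|u^\eps_{v^\eps}(s)-u_{v^\eps}(s)\|^2\,ds$. Carrying the extra term through, the analogue of \eqref{cso 7} then has the additional nonnegative quantity $2\lambda_4\int_0^t\|u^\eps_{v^\eps}(s)-u_{v^\eps}(s)\|_{L^p(\R^n)}^p\,ds$ on its left-hand side, while its right-hand side is structurally unchanged (only the constant $c_1$ absorbs an extra $2\|\psi_5\|_{L^\infty(0,T;L^\infty(\R^n))}$). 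From this point on, the stopping-time argument with $\tau^\eps_R$, Gronwall's inequality, and the Burkholder/Doob estimates \eqref{cso 9a}--\eqref{cso 12}, together with the uniform bound \eqref{gep 2} and the tail estimate \eqref{cso 15} on $P(\tau^\eps_R<T)$, all go through verbatim; hence, in addition to \eqref{cso 13}--\eqref{cso 14}, I would also obtain $\int_0^{T\wedge\tau^\eps_R}\|u^\eps_{v^\eps}(s)-u_{v^\eps}(s)\|_{L^p(\R^n)}^p\,ds\to0$ in probability, and, removing the stopping time as in \eqref{cso 16}--\eqref{cso 18}, the convergence $u^\eps_{v^\eps}-u_{v^\eps}\to0$ in probability in $C([0,T],H)\cap L^2(0,T;V)\cap L^p(0,T;L^p(\R^n))$.

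To finish, I would use Skorokhod's representation theorem exactly as at the end of the proof of Lemma \ref{cso}: there exist $S_N$-valued random variables $\widetilde v^\eps,\widetilde v$ on a new probability space, with the same laws as $v^\eps,v$ and with $\widetilde v^\eps\to\widetilde v$ almost surely in $S_N$; then Lemma \ref{wc_sola} gives $u_{\widetilde v^\eps}\to u_{\widetilde v}$ almost surely in $C([0,T],H)\cap L^2(0,T;V)\cap L^p(0,T;L^p(\R^n))$, hence $u_{v^\eps}\to u_v$ in distribution in that space. Combining this with the convergence in probability of $u^\eps_{v^\eps}-u_{v^\eps}$ to zero yields the claim. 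I do not expect a genuine obstacle here; the only point deserving a moment's care is that the extra term $2\lambda_4\int_0^t\|\cdot\|_{L^p(\R^n)}^p$ keeps the right sign after taking the supremum over $r\le t$ and applying Gronwall — it does, since it is nonnegative and appears with a negative sign on the left of the It\^o identity, so it may simply be discarded before Gronwall (to control $\sup_r\|\cdot\|^2$) and then recovered by integrating the resulting differential inequality once more. Everything else is a routine repetition of the martingale estimates already carried out in Lemma \ref{cso}.
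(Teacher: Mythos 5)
Your proposal is correct and follows essentially the same route as the paper: the paper's proof of Lemma \ref{cso1} consists precisely of replacing the estimate \eqref{cso 5} by the \eqref{Fa}-based bound \eqref{cso1 1}, carrying the resulting nonnegative term $2\lambda_4\int_0^t\|u^\eps_{v^\eps}(s)-u_{v^\eps}(s)\|_{L^p(\R^n)}^p\,ds$ on the left-hand side, and then declaring the rest "almost identical" to Lemma \ref{cso} (including the final Skorokhod step with Lemma \ref{wc_sola} in place of Lemma \ref{wc_sol}). Your remark about discarding the nonnegative $L^p$ term before Gronwall and recovering it afterwards is exactly the right way to make the omitted step rigorous.
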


\begin{proof}
  By \eqref{Fa} we have
  $$
  - 2
 \int_0^t\int_{\R^n}
  ( F(s,x, u^\eps_{v^\eps}  ) -  F(s,
  x, u_{v^\eps})  )
 (u^\eps_{v^\eps}  -  u_{v^\eps}  ) dx ds
  $$
     \be\label{cso1 1}
\le
-2\lambda_4  \int_0^t
\|  u^\eps_{v^\eps} (s) - u_{v^\eps}   (s)\|_{L^p(\R^n)}^p
ds
+
 2\|\psi_5 \|_{L^\infty(0,T; 
 L^ \infty(\R^n)  )}  \int_0^t
 \| u^\eps_{v^\eps}   (s) - u_{v^\eps}   (s) \|^2 ds.
\ee
 By 
  \eqref{cso 4}, \eqref{cso 6} and 
 \eqref{cso1 1} 
  we get that
for all $t\in [0,T]$,
$$
 \|  u_{v^\eps}^\eps(t) -  u_{v^\eps}(t) \|^2
+2 
\int_0^t
\|  (-\Delta)^{\frac {\alpha}2}
( u^\eps_{v^\eps}(s)  -  u_{v^\eps} (s)
 ) \|^2  ds
 +2\lambda_4  \int_0^t
\|  u^\eps_{v^\eps} (s) - u_{v^\eps}   (s)\|_{L^p(\R^n)}^p
ds
 $$
 $$
\le
 c_1 \int_0^t (1+ \| v^\eps (s)\|_{l^2})
 \|  u^\eps_{v^\eps}(s)  -  u_{v^\eps}(s) \|^2
 ds
 $$
   $$
+ \eps
\int_0^t
 \|   \sigma(s, u^\eps_{v^\eps} (s)  )\|
_{\call_2(l^2, H)}^2  ds
+ 
2\sqrt{\eps}  
\int_0^t
\left (
u^\eps_{v^\eps}(s)  -  u_{v^\eps}(s),\
 \sigma(s, u^\eps_{v^\eps} (s)  ) 
   dW  \right ) ,
$$
The rest of the proof is almost identical to that
of Lemma \ref{cso} and thus omitted.
  \end{proof}

The main result of the paper is stated below.

 \begin{thm}\label{main1}
  Suppose   \eqref{F1}-\eqref{F4},
  \eqref{sig0}-\eqref{sig3}
  and \eqref{Fa}
  hold, $T>0$,    
   and     $u^\eps$
 is the solution of 
   \eqref{intr1}-\eqref{intr3}.
   Then the family
   $\{u^\eps\} $ 
satisfies the LDP
in $C([0,T], H) \bigcap L^2(0,T;V)
\bigcap L^p(0,T; L^p(\R^n))
$, 
  as $\eps \to 0$, 
with  good rate function
  given by \eqref{rate_LS}.
\end{thm}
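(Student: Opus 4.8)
\textbf{Proof of Theorem \ref{main1} (plan).}
The plan is to obtain Theorem \ref{main1} from the weak convergence criterion of Proposition \ref{LP1}, in exactly the same way that Theorem \ref{main} was derived, but now working in the larger Polish space $\cale := C([0,T], H) \bigcap L^2(0,T; V) \bigcap L^p(0,T; L^p(\R^n))$. Recall from Proposition \ref{posed1} that for each $\eps \in (0,1)$ the solution $u^\eps$ of \eqref{intr1}-\eqref{intr3} is of the form $u^\eps = \calg^\eps(W)$ for a Borel-measurable map $\calg^\eps : C([0,T], U) \to \cale$, and that $\calg^0$ and the candidate rate function are given by \eqref{calg0} and \eqref{rate_LS}. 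It therefore suffices to verify the hypotheses {\bf (H1)} and {\bf (H2)} of Section 2 with this choice of $\cale$.

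First I would verify {\bf (H2)}: for every $N < \infty$, the set $K_N = \{\calg^0(\int_0^\cdot v(t)\,dt) : v \in S_N\}$ is a compact subset of $\cale$. This is precisely Lemma \ref{clev1}, which itself rests on the weak-to-strong continuity of the map $v \mapsto u_v$ into $\cale$ established in Lemma \ref{wc_sola}. Next I would verify {\bf (H1)}: whenever $N < \infty$ and $\{v^\eps\} \subseteq \cala_N$ converges in distribution to $v$ as $S_N$-valued random variables, the process $\calg^\eps(W + \eps^{-\frac 12}\int_0^\cdot v^\eps(t)\,dt)$ converges in distribution in $\cale$ to $\calg^0(\int_0^\cdot v(t)\,dt)$; this is exactly the content of Lemma \ref{cso1}. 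With {\bf (H1)} and {\bf (H2)} in hand, Proposition \ref{LP1} delivers the LDP for $\{u^\eps\}$ in $\cale$ with the good rate function $I$ defined by \eqref{pre1}. Since $\calg^0(\int_0^\cdot v(t)\,dt) = u_v$ by \eqref{calg0}, the infimum in \eqref{pre1} coincides with the one in \eqref{rate_LS}, so the rate function is the one stated in the theorem, and the proof is complete.

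The substantive work has in fact already been absorbed into the preceding lemmas, so at the level of Theorem \ref{main1} itself there is essentially nothing left to do: the difficulties are the uniform tail-ends estimate of Lemma \ref{tail}, which compensates for the non-compactness of $H^\alpha(\R^n) \hookrightarrow L^2(\R^n)$; the upgrade of the weak-to-strong continuity from $C([0,T],H) \bigcap L^2(0,T;V)$ to the $L^p(0,T; L^p(\R^n))$-topology in Lemma \ref{wc_sola}; and the vanishing of the stochastic correction term in the control-equation estimate of Lemma \ref{cso1}. The one point that requires care is that $p > 2$, so the topology of $\cale$ is strictly finer than that of $C([0,T],H) \bigcap L^2(0,T;V)$; this is exactly why Lemmas \ref{wc_sola}, \ref{clev1} and \ref{cso1} must be re-proved under the strong dissipativeness assumption \eqref{Fa}, which supplies the extra coercivity $2\lambda_4 \| u_{v_n} - u_v \|_{L^p(\R^n)}^p$ needed to control that component. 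Granting these inputs, Theorem \ref{main1} follows immediately from Proposition \ref{LP1}.
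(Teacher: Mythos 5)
Your proposal is correct and follows exactly the paper's own route: the paper proves Theorem \ref{main1} by citing Proposition \ref{LP1} together with Lemmas \ref{clev1} (for {\bf (H2)}) and \ref{cso1} (for {\bf (H1)}), which is precisely your argument, and your identification of \eqref{pre1} with \eqref{rate_LS} via \eqref{calg0} matches the paper's implicit use of that fact. Nothing further is needed.
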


\begin{proof}
The result follows from Proposition \ref{LP1}
and  Lemmas  \ref{clev1}  and \ref{cso1}.
\end{proof}

\section*{Competing interests and declarations}
I declare that the author  has
 no competing interests as defined by
  Springer, or other interests that might be
   perceived to influence the results and/or 
   discussion reported in this paper.

\end{document}